\documentclass[EJP]{}

\usepackage{appendix} 
\usepackage[numbers]{natbib} 
\usepackage{enumitem} 
\usepackage{graphicx} 

\SHORTTITLE{Gaussian product inequality conjecture for Wishart random matrices}

\TITLE{On the Gaussian product inequality conjecture for disjoint principal minors of Wishart random matrices}

\AUTHORS{
Christian~Genest\footnote{Department of Mathematics and Statistics, McGill University, Montr\'eal (Qu\'ebec) Canada H3A 0B9.
\BEMAIL{christian.genest@mcgill.ca}} 
\and
Fr\'ed\'eric~Ouimet\footnote{Department of Mathematics and Statistics, McGill University, Montr\'eal (Qu\'ebec) Canada H3A 0B9.
\BEMAIL{frederic.ouimet2@mcgill.ca}} 
\and
Donald~Richards\footnote{Department of Statistics, Pennsylvania State University, University Park, PA 16802, USA.
\BEMAIL{richards@stat.psu.edu}} 
}

\KEYWORDS{Bernstein function; complete monotonicity; Gaussian product inequality; Laplace transform order; multivariate Gamma distribution; principal minors; Wishart distribution} 

\AMSSUBJ{60E15} 
\AMSSUBJSECONDARY{26A48; 44A10; 62E15; 62H10} 

\SUBMITTED{March 12, 2024} 
\ACCEPTED{***} 

\VOLUME{0}
\YEAR{2024}
\PAPERNUM{0}
\DOI{10.1214/YY-TN}

\ABSTRACT{This paper extends various results related to the Gaussian product inequality (GPI) conjecture to the setting of disjoint principal minors of Wishart random matrices. This includes product-type inequalities for matrix-variate analogs of completely monotone functions and Bernstein functions of Wishart disjoint principal minors, respectively. In particular, the product-type inequalities apply to inverse determinant powers. Quantitative versions of the inequalities are also obtained when there is a mix of positive and negative exponents. Furthermore, an extended form of the GPI is shown to hold for the eigenvalues of Wishart random matrices by virtue of their law being multivariate totally positive of order~2 ($\mathrm{MTP}_2$). A new, unexplored avenue of research is presented to study the GPI from the point of view of elliptical distributions.}

\newcommand{\N}{\mathbb{N}}
\newcommand{\R}{\mathbb{R}}
\newcommand{\PP}{\mathsf{P}} 
\newcommand{\EE}{\mathsf{E}} 
\newcommand{\bb}[1]{\boldsymbol{#1}}
\newcommand{\rd}{\mathrm{d}}
\newcommand{\etr}{\mathrm{etr}}
\newcommand{\bigtimes}{\scalebox{1.1}{$\times$}}
\newcommand{\midoplus}{\scalebox{1.2}{~$\oplus$~}}

\begin{document}

\section{Introduction}\label{sec:intro}

The Gaussian product inequality (GPI) is a long-standing conjecture credited to \citet{MR1636556}, who stated it in 1998.  The GPI was initially framed in the context of the real linear polarization constant problem in functional analysis. \citet{MR2385646} reformulated it 10~years later as a moment inequality for Gaussian random vectors. In the latter form, the conjecture asserts that for any centered Gaussian random vector $\bb{Z} = (Z_1, \ldots, Z_d)$ of dimension $d\in \N = \{1,2,\ldots\}$ and every integer $m\in \N_0 = \{0,1,\ldots\}$, one~has
\begin{equation}\label{eq:1.1}
\EE \left(\prod_{i=1}^d Z_i^{2 m}\right) \geq \prod_{i=1}^d \EE \left( Z_i^{2 m} \right).
\end{equation}

The inequality~\eqref{eq:1.1} is known to imply the real linear polarization constant conjecture; see \citet{MR3425898}. It is also closely related to the $U$-conjecture, which states that if two polynomials in the components of $\bb{Z}$ are independent, then there exists an orthogonal transformation such that the transformed polynomials are dependent on disjoint subsets of the components of $\bb{Z}$; see, e.g., \cite{MR0346969,MR3425898}, and references therein.

\citet{MR2886380} proposed a strong version of the conjecture which claims that for every dimension $d\in \N$ and every vector $\bb{\alpha} = (\alpha_1, \ldots, \alpha_d)\in [0,\infty)^d$, one has
\begin{equation}\label{eq:1.2}
\mathrm{GPI}_d(\bb{\alpha}) : \quad \EE \left( \prod_{i=1}^d |Z_i|^{2 \alpha_i} \right) \geq \prod_{i=1}^d \EE \left( |Z_i|^{2 \alpha_i} \right).
\end{equation}
This inequality is homogeneous in the sense that each random variable $Z_i$ can be multiplied at will by a positive constant $k_i$ without altering the conclusion. For this reason, one assumes, without loss of generality, that the covariance matrix of $\bb{Z}$, denoted by $\Sigma = (\sigma_{ij})_{1\leq i,j \leq d}$, is a correlation matrix, i.e., $\sigma_{ii} = 1$ for every integer $i \in \{1,\ldots, d\}$.

Here is a point-by-point list, ordered chronologically, of what is known about~\eqref{eq:1.2}:
\begin{enumerate}[label=(\alph*),start=1]
\item
For all $\bb{\alpha}\in [0,\infty)^2$, $\mathrm{GPI}_2(\bb{\alpha})$ holds as a consequence of $(|Z_1|, |Z_2|)$ being $\mathrm{MTP}_2$; see \citet{MR628759}. \label{item:a}

\item
For all $d\in \N$, $\mathrm{GPI}_d(1,\ldots, 1)$ holds, as proved by \citet{MR2385646}, who exploited Wick's formula in conjunction with properties of permanents and Hafnians. \label{item:b}

\item
For all $m,n\in \N_0$, $\mathrm{GPI}_3(m, m, n)$ holds, as proved by \citet{MR4052574}, based on a careful analysis of Gaussian hypergeometric functions. \label{item:c}

\item
For all $d\in \N$ and $\bb{n}\in \N_0^d$, $\mathrm{GPI}_d(\bb{n})$ holds whenever the correlation matrix $\Sigma$ is completely positive, as proved in \cite{MR4466643}, based on a combinatorial methodology closely tied to the log-convexity of the gamma function. \label{item:d}

\item
For all $d\in \N$ and $\bb{n}\in \N_0^d$, $\mathrm{GPI}_d(\bb{n})$ holds whenever all the entries of the correlation matrix $\Sigma$ are nonnegative, as proved by \citet{MR4445681}, using the Isserlis--Wick type formula of \citet{MR3324071}; see \citet{arXiv:1705.00163} for a more detailed proof.

This result was generalized by \citet{MR4554766} to the case in which the squared Gaussian random variables $Z_1^2, \ldots, Z_d^2$ are replaced by the components of a multivariate Gamma random vector in the sense of \citet{MR44790}. In the version given in \citep{MR4554766}, the correlation matrix $\Sigma$ is allowed to have nonnegative entries up to a conjugation transformation $S\mapsto S \Sigma S$ by a signature matrix $S$. As highlighted in \cite{MR4538422}, this last result can be generalized even further to random vectors of traces of disjoint diagonal blocks of Wishart random matrices. The multivariate Gamma distribution corresponds to the special case in which the diagonal blocks have size $1\times 1$. \label{item:e}

\item
For all $(n_1, n_2, n_3)\in \{1\} \times \{2, 3\} \times \N_0$, $\mathrm{GPI}_3(n_1,n_2,n_3)$ holds, as validated by \citet{MR4445681}, who employed a brute-force combinatorial approach. \label{item:f}

\item
For all $(n_1, n_2, n_3)\in \N_0 \times \{3\} \times \{2\}$, $\mathrm{GPI}_3(n_1,n_2,n_3)$ holds, as stated in Theorem~4.1 of \citet{MR4760098}, taking advantage of a sums-of-squares methodology combined with in-depth symbolic calculations using \texttt{Macaulay2} and \texttt{Mathematica}. \label{item:g}

\item
For all $(n_1, n_2, n_3, n_4)\in \N_0 \times \{1\} \times \{1\} \times \{1\}$, $\mathrm{GPI}_4(n_1,n_2,n_3,n_4)$ holds, as stated in Theorem~4.2 of \citet{MR4760098}, using the same methodology as for~\ref{item:g}. The case $(\alpha_1, \alpha_2, \alpha_3, \alpha_4, \alpha_5)\in [1/20,\infty) \times \{1\} \times \{1\} \times \{1\} \times \{1\}$ is treated in Theorem~5.1 of the same paper. \label{item:h}

\item
For all $d\in \N$ and $\bb{n}\in \N_0^d$, $\mathrm{GPI}_d(\bb{n})$ is investigated in \cite{Ouimet_2023_multinomial_GPI} when $\Sigma$ is a multinomial correlation matrix, and it is shown that the GPI is equivalent to a combinatorial inequality only involving finitely many terms. \label{item:i}

\item
For all $(n_1, n_2, n_3)\in \{1\} \times \N_0 \times \N_0$, $\mathrm{GPI}_3(n_1,n_2,n_3)$ holds, as stated in Theorem~1.1 of \citet{MR4593134}, where a stronger moment ratio inequality is proved. \label{item:j}

\item
For all $(n_1, n_2, n_3)\in \N_0^3$, $\mathrm{GPI}_3(n_1, n_2, n_3)$ holds, as proved by \citet{MR4661091}, using induction and an elegant Gaussian integration-by-parts argument. In particular, their result supersedes~\ref{item:c},~\ref{item:f},~\ref{item:g},~and~\ref{item:j} above. \label{item:k}

\item
For all $(\alpha_1, \alpha_2, \alpha_3)\in \N_0 \times [0,\infty) \times [0,\infty)$, $\mathrm{GPI}_3(\alpha_1, \alpha_2, \alpha_3)$ holds, as shown by \citet{MR4798604} who extended the induction approach of \citet{MR4661091} in \ref{item:k}. \label{item:l}

\item
For all $(\alpha_1, \alpha_2, \alpha_3)\in \N_0/2 \times \N_0/2 \times \N_0/2$, $\mathrm{GPI}_3(\alpha_1, \alpha_2, \alpha_3)$ holds, as shown by \citet{MR4794515} who combined the results in \ref{item:k} and \ref{item:l} together with induction and an elaborate brute-force proof of the base case $\mathrm{GPI}_3(1/2, 1/2, 1/2)$. \label{item:m}
\end{enumerate}

One notable complementary direction of research that has emerged in the literature is the case in~\eqref{eq:1.2} for which all exponents are assumed to be nonpositive and the squared Gaussian random variables $Z_1^2, \ldots, Z_d^2$ are replaced by the components of a more general random vector $\bb{X} = (X_1, \ldots, X_d)$ following a given distribution supported on $(0,\infty)^d$. It is also assumed that the right-hand side in~\eqref{eq:1.2} can be written more generally as the multiplication of product moments for any partition of factors from the full product. Specifically, for an appropriate range of $\nu_1, \ldots, \nu_d\, \in [0, \infty)$, and for any integer $k\in \{2, \ldots, d \}$, one considers
\begin{equation}\label{eq:1.3}
\EE \left(\prod_{i=1}^d X_i^{-\nu_i} \right) \geq \EE \left(\prod_{i=1}^{k-1} X_i^{-\nu_i}\right) \EE \left(\prod_{i=k}^d X_i^{-\nu_i}\right),
\end{equation}
provided that the expectations are finite. Here is a list of what is known about \eqref{eq:1.3}:
\begin{enumerate}[label=(\alph*),start=14]
\item
\citet{MR3278931} has shown that~\eqref{eq:1.3} holds for all $\nu_1, \ldots, \nu_d \in [0,\alpha/2)$ when $\bb{X}$ is a multivariate Gamma random vector in the sense of \citet{MR44790} with $\alpha$ degrees of freedom. This result was recovered by \citet{MR4554766} through the application of the multivariate Gamma extension of the Gaussian correlation inequality, due to \citet{MR3289621}, which shows that the random vector $(X_1^{-\nu_1},\ldots,X_d^{-\nu_d})$ is strongly positive upper orthant dependent. In particular, the special case in which the $X_i$'s are the squared components $Z_1^2, \ldots, Z_d^2$ of a Gaussian random vector $\bb{Z}$ holds for every $\nu_1, \ldots, \nu_d \in [0,1/2)$. \label{item:n}

\item
In \cite{MR4538422}, \ref{item:n} was generalized in two ways by showing that~\eqref{eq:1.3} holds when $\bb{X}$ is a random vector of traces of disjoint diagonal blocks of a Wishart random matrix, and the negative power functions $x_i\mapsto x_i^{-\nu_i}$ are replaced by completely monotone functions $\phi_i : (0,\infty) \to [0,\infty)$. In their work, the multivariate Gamma distribution is the special case in which diagonal blocks are $1\times 1$ in size. \label{item:o}
\end{enumerate}

The first objective of the present paper is to generalize statement~\ref{item:n} and complement statement~\ref{item:o} by showing that \eqref{eq:1.3} holds when $\bb{X}$ is a random vector of disjoint principal minors of a Wishart random matrix. In fact, it will be shown more generally that the inequality holds if the inverse powers of disjoint principal minors are replaced by matrix-variate completely monotone functions taking the disjoint diagonal blocks of a Wishart random matrix as arguments; see Theorem~\ref{thm:generalization.Theorem.3.2.Wei} and Corollary~\ref{cor:GPI.negative.powers.determinants.Wishart} for further details. These results rest on the crucial property that the disjoint diagonal blocks of Wishart random matrices are smaller or equal to their independent counterparts in the matrix-variate Laplace transform order; see Proposition~\ref{prop:Wishart.diag.block.Lt.order}.

A complementary direction of research that has emerged in the literature comprises the so-called `opposite GPIs', an expression coined by \citet{MR4471184}. In this setting, the exponents in~\eqref{eq:1.2} are assumed to be a mix of nonpositive and nonnegative numbers and the problem is to determine whether the inequality holds in a specific direction and, if so, under what conditions. \citet{MR4471184} established, by employing a moment formula from \citet{MR0045347}, that the converse of inequality~\eqref{eq:1.2} holds for $d = 2$ and every $(\alpha_1,\alpha_2)\in (-1/2,0] \times [0,\infty)$, thereby completing the exploration of the GPI conjecture in the bivariate case. Tighter bounds were subsequently investigated by \citet{MR4530374} in this context.

Very recently, the $d$-dimensional case was tackled by \citet{MR4666255}. In addition to specialized results in dimensions three and four, these authors showed that for any centered Gaussian random vector $\bb{Z} = (Z_1, \ldots, Z_d)$, any dimension $d \in \N$ and any $\nu_1 \in [0,1/2)$, one has
\begin{equation}\label{eq:1.4}
\EE\left(|Z_1|^{-2\nu_1} \prod_{i=2}^d Z_i^2\right) \geq \left\{\prod_{i=2}^d (1 - \sigma_{1i}^2)\right\} \EE \left( |Z_1|^{-2\nu_1} \right) \prod_{i=2}^d \EE \left( Z_i^2 \right),
\end{equation}
and also, for all $\nu_1,\ldots,\nu_{d-1}\in [0,1/2)$ and $\nu_d\in [0,\infty)$,
\begin{equation}\label{eq:1.5}
\EE\left\{\left(\prod_{i=1}^{d-1} |Z_i|^{-2\nu_i}\right) |Z_d|^{2\nu_d}\right\} \leq \EE\left(\prod_{i=1}^{d-1} |Z_i|^{-2\nu_i}\right) \EE \left( |Z_d|^{2\nu_d} \right).
\end{equation}

The second objective of the present paper is to generalize both \eqref{eq:1.4} and \eqref{eq:1.5} to the setting in which the squared components $Z_1^2, \ldots, Z_d^2$ are replaced by disjoint principal minors of Wishart random matrices; see Propositions~\ref{prop:analogue.Theorem.1.1.Zhou.et.al}~and~\ref{prop:analogue.Theorem.1.3.Zhou.et.al}, respectively. In particular, when the diagonal blocks have size $1\times 1$, the present findings validate the results in the multivariate Gamma setting.

As mentioned in item~\ref{item:o} above, the study of inequality~\eqref{eq:1.3} can be extended to the case in which negative power functions are replaced by completely monotone functions. Another similar idea, which was tackled in \cite{MR4538422}, is to investigate what happens if one replaces completely monotone functions with Bernstein functions (also referred to as Laplace exponents by some probabilists). It was shown in \cite{MR4538422} that if $(X_1, X_2)$ is a random pair consisting of the two disjoint traces inside a Wishart random matrix that has been partitioned into a $2\times 2$ block matrix, then, for any given Bernstein functions $f$ and $g$, one has
\[
\EE \{ f(X_1) g(X_2) \} \geq \EE \{ f(X_1) \} \EE \{ g(X_2) \}.
\]

The third and final objective of the present paper is to generalize this result to the setting in which the traces of disjoint diagonal blocks of a Wishart random matrix are replaced by the corresponding disjoint principal minors; see Theorem~\ref{thm:generalization.Theorem.2.GO.2023}.

The paper's outline is as follows. Some definitions and preliminary results are given in Section~\ref{sec:preliminaries}. The main results are stated in Section~\ref{sec:main.results}. All proofs are deferred to Section~\ref{sec:proofs}. In Section~\ref{sec:elliptical.GPI}, a new, unexplored avenue of research is presented to study the GPI from the point of view of elliptical distributions. Some technical lemmas used in the proofs are relegated to~\ref{app:A}. In \ref{app:B}, necessary and sufficient conditions are derived under which the expectations in Corollary~\ref{cor:GPI.negative.powers.determinants.Wishart} and Proposition~\ref{prop:analogue.Theorem.1.3.Zhou.et.al} are finite. In \ref{app:C}, the integrals appearing in the upper bound in Corollary~\ref{cor:GPI.negative.powers.determinants.Wishart} are investigated.

We conclude this introduction with a new conjecture which seems natural, given the results presented here pertaining to the GPI for disjoint principal minors of Wishart random matrices.

\begin{conjecture}\label{conj:1.1}
Let $p \in \N$ be given. For arbitrary degree-of-freedom $\alpha \in (p-1,\infty)$ and scale matrix $\Sigma\in \mathcal{S}_{++}^p$, let $\mathfrak{X} \sim\mathcal{W}_p (\alpha,\Sigma)$ be a $p \times p$ Wishart random matrix in the sense of Definition~\ref{def:Wishart} below. Write $\mathfrak{X}$ in block matrix form $(\mathfrak{X}_{ij})_{1 \leq i, j \leq d}$, where each block $\mathfrak{X}_{ij}$ has size $p_i \times p_j$, so that $p_1 + \cdots + p_d = p$. Then, for every $\nu_1, \ldots, \nu_d \in [0, \infty)$, one has
\begin{equation}\label{eq:1.6}
\EE(|\mathfrak{X}_{11}|^{\nu_1} \cdots |\mathfrak{X}_{dd}|^{\nu_d}) \geq \EE(|\mathfrak{X}_{11}|^{\nu_1}) \cdots \EE(|\mathfrak{X}_{dd}|^{\nu_d}).
\end{equation}
\end{conjecture}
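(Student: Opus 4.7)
My plan is to try to establish a sufficiently strong positive dependence property among the random variables $|\mathfrak{X}_{11}|, \ldots, |\mathfrak{X}_{dd}|$, such as positive upper orthant dependence or, ideally, $\mathrm{MTP}_2$. Any such property would immediately yield \eqref{eq:1.6} because each map $x \mapsto x^{\nu_i}$ with $\nu_i \geq 0$ is nondecreasing, and positive upper orthant dependence is precisely the condition under which expectations of products of nondecreasing univariate functions exceed the product of expectations. This is the natural companion to what is done in the paper for negative exponents, where the opposite direction of dependence (positive lower orthant dependence) is exactly what the matrix-variate Laplace transform order in Proposition~\ref{prop:Wishart.diag.block.Lt.order} delivers.

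For $d = 2$ and $\nu_1, \nu_2 \in (0, 1]$, the conjecture follows directly from Theorem~\ref{thm:generalization.Theorem.2.GO.2023}, since $x \mapsto x^{\nu}$ is a Bernstein function on $(0, \infty)$ for each $\nu \in (0, 1]$. To reach $d \geq 3$ with all $\nu_i \in (0, 1]$, I would induct on $d$. The key structural input is that the marginal distribution of $(\mathfrak{X}_{22}, \ldots, \mathfrak{X}_{dd})$ is $\mathcal{W}_{p - p_1}(\alpha, \Sigma_{-1,-1})$, so the inductive hypothesis controls $\EE\bigl(\prod_{i=2}^{d} |\mathfrak{X}_{ii}|^{\nu_i}\bigr)$. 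The remaining step is to combine this with the $|\mathfrak{X}_{11}|^{\nu_1}$ factor, which would require extending Theorem~\ref{thm:generalization.Theorem.2.GO.2023} from a Bernstein function of a single principal minor to a Bernstein-type functional of an entire subblock. A natural target is a matrix-variate Bernstein framework in which $\tilde{\mathfrak{X}} \mapsto \prod_{i=2}^d |\tilde{\mathfrak{X}}_{ii}|^{\nu_i}$ qualifies as an admissible functional of $\tilde{\mathfrak{X}} = \mathfrak{X}_{-1,-1}$, paralleling the matrix-variate completely monotone formalism already invoked in Theorem~\ref{thm:generalization.Theorem.3.2.Wei}.

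The main obstacle is the case $\nu_i > 1$, which lies outside the Bernstein regime, and for which Proposition~\ref{prop:Wishart.diag.block.Lt.order} delivers only the wrong direction of inequality. To cover this range, I would try to establish $\mathrm{MTP}_2$ of $(|\mathfrak{X}_{11}|, \ldots, |\mathfrak{X}_{dd}|)$ directly. Since the eigenvalues of a Wishart matrix are known to be $\mathrm{MTP}_2$ (as used in the eigenvalue result announced in the abstract), one possible route is to pull this property back through the Gaussian coupling $\mathfrak{X} = ZZ^{\top}$ for integer $\alpha$ (and extend to general $\alpha$ by density), combining it with Cauchy interlacing and the Bartlett-type product decomposition of block determinants. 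I expect this to be the crux: positive dependence of the block determinants must be extracted without the cancellations that completely monotone or Bernstein representations supply, and the cone of positive definite matrices is not a lattice under the Loewner order, so an $\mathrm{FKG}$/$\mathrm{MTP}_2$ verification on the joint density of $(\mathfrak{X}_{11}, \ldots, \mathfrak{X}_{dd})$ cannot proceed by direct log-supermodularity and will likely require a descent to the joint law of $(|\mathfrak{X}_{11}|, \ldots, |\mathfrak{X}_{dd}|)$ on $(0, \infty)^d$, where the lattice structure is standard.
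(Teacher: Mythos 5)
The statement you are asked to prove is Conjecture~\ref{conj:1.1}, which the paper itself does not prove. The paper merely records it as an open conjecture and notes (in a remark) that the $d=2$ case was settled in a separate reference via an exact computation of $\EE(|\mathfrak{X}_{11}|^{\nu_1}|\mathfrak{X}_{22}|^{\nu_2})$ in terms of a Gaussian hypergeometric function of matrix argument. So there is no in-paper proof to compare against, and your proposal must be judged on whether it actually closes the conjecture. It does not: it is an explicit roadmap with acknowledged open steps, and moreover one of the steps you treat as already established is wrong.

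The concrete error is in the claim that for $d=2$ and $\nu_1,\nu_2\in(0,1]$, the conjecture follows from Theorem~\ref{thm:generalization.Theorem.2.GO.2023} because $x\mapsto x^{\nu}$ is a Bernstein function on $(0,\infty)$. Theorem~\ref{thm:generalization.Theorem.2.GO.2023} requires $f$ and $g$ to be \emph{matrix-variate} Bernstein functions in the sense of Definition~\ref{def:MB}, i.e., functions of the matrix argument $T_{ii}\in\mathcal{S}_{++}^{p_i}$, not of its determinant. The map $T\mapsto|T|^{\nu}$ is generally not a matrix-variate Bernstein function once $p_i\geq 2$: restricting to $T=t I_{p_i}$ gives $g(tI_{p_i})=t^{p_i\nu}$, and by Corollary~\ref{cor:exponential.MCM.implies.MLCM} a matrix-variate Bernstein function $g$ must have $\exp(-g)$ matrix-variate completely monotone, hence $t\mapsto e^{-t^{p_i\nu}}$ must be completely monotone on $(0,\infty)$; this fails for $p_i\nu>1$, so in particular for $\nu=1$ and any $p_i\geq 2$. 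Thus the base case of your induction does not follow from the cited theorem except in the scalar block case $p_1=p_2=1$ (which is the already-known multivariate Gamma setting) or for the much narrower range $\nu_i\leq 1/p_i$, and even there you have not verified the matrix-variate Bernstein property.

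The remainder of the proposal, namely establishing $\mathrm{MTP}_2$ of $(|\mathfrak{X}_{11}|,\ldots,|\mathfrak{X}_{dd}|)$ by transporting $\mathrm{MTP}_2$ of the Wishart eigenvalue law through a Gaussian coupling, Cauchy interlacing, and a Bartlett-type decomposition, is, as you yourself flag, the crux and is left entirely unargued. There is no known mechanism by which $\mathrm{MTP}_2$ of eigenvalues of the full matrix transfers to $\mathrm{MTP}_2$ of determinants of disjoint diagonal blocks; the principal minors are not symmetric functions of the eigenvalues, so the connection is not structural. Until that step (or an alternative) is supplied, the proposal does not constitute a proof of the conjecture even in principle, and the one step presented as established is incorrect for $p_i\geq 2$.
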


\begin{remark}
Conjecture~\ref{conj:1.1} was proved to hold for $d = 2$ by \citet{arXiv:2409.14512}, where the joint moment $\EE(|\mathfrak{X}_{11}|^{\nu_1} |\mathfrak{X}_{22}|^{\nu_2})$ is shown to be the product of $\EE(|\mathfrak{X}_{11}|^{\nu_2}) \EE(|\mathfrak{X}_{22}|^{\nu_2})$ and the Gaussian hypergeometric function of matrix argument with certain parameters.
\end{remark}

\begin{remark}\label{rem:exect.finite}
One can verify that the expectations in \eqref{eq:1.6} are always finite by applying H\"older's inequality to the left-hand side and using the moment formulas for standalone principal minors in Lemma~\ref{lem:determinant.power.moments}.
\end{remark}

\section{Definitions and preliminary results}\label{sec:preliminaries}

For any positive integer $p\in \N$, denote by $\mathcal{S}^p$, $\mathcal{S}_+^p$, and $\mathcal{S}_{++}^p$ the spaces of real matrices of size $p\times p$ that are respectively symmetric, (symmetric) nonnegative definite, and (symmetric) positive definite.

Throughout the paper, the notation $\mathrm{tr}(\cdot)$ stands for the trace operator for matrices, $\etr(\cdot) = \exp\{\mathrm{tr}(\cdot)\}$ their exponential trace and $|\cdot|$ their determinant. The operation $\scalebox{1.2}{$\oplus$}$ between two matrices $A$ and $B$ stands for their direct sum, i.e., $A \midoplus B \equiv \mathrm{diag}(A,B)$.

When a matrix $M$ is expressed in the form $M = (M_{ij})$ for some block matrices $M_{ij}$ of appropriate size, the notation $M_{a:b,k:\ell} = (M_{ij})_{a\leq i \leq b, k\leq j \leq \ell}$ will often be used to denote submatrices of $M$ formed by a concatenation of a subset of the blocks. If $a = b$ or $k = \ell$, one simply replaces $a\!:\!b$ with $a$ and $k\!:\!\ell$ with $k$, respectively.

For arbitrary positive integer $p \in \N$, recall that the multivariate gamma function $\Gamma_p$ can be equivalently defined, for every $\nu > (p-1)/2$, by
\[
\Gamma_p (\nu) = \int_{\mathcal{S}_{++}^p} \etr(- X) \, |X|^{\nu - (p + 1)/2} \rd X = \pi^{p(p-1)/4} \prod_{i=1}^p \Gamma \{ \nu - (i - 1)/2\},
\]
where $\rd X$ denotes the Lebesgue measure on $\mathcal{S}_{++}^p$ (see, e.g., \citet[p.~61]{MR652932}), and $\Gamma_1(\cdot)$ reduces to the ordinary gamma function $\Gamma(\cdot)$. Further recall from Section~3.2.1 of \citet{MR652932} the following definition of the Wishart distribution, sometimes also referred to as the Wishart or Wishart--Laguerre ensemble in random matrix theory.

\begin{definition}
\label{def:Wishart}
For arbitrary positive integer $p\in \N$, degree-of-freedom $\alpha \in (p-1,\infty)$ and scale matrix $\Sigma\in \mathcal{S}_{++}^p$, the probability density function of the $\mathcal{W}_p(\alpha,\Sigma)$ distribution is defined for every $X\in \mathcal{S}_{++}^p$, relative to the Lebesgue measure $\rd X$ on $\mathcal{S}_{++}^p$, by
\[
f_{\alpha,\Sigma}(X) = \frac{|X|^{\alpha/2 - (p + 1)/2} \etr(-\Sigma^{-1} X / 2)}{2^{p \alpha / 2} |\Sigma|^{\alpha/2} \Gamma_p(\alpha/2)}.
\]
If a random matrix $\mathfrak{X}$ of size $p\times p$ follows this distribution, one writes $\mathfrak{X} \sim\mathcal{W}_p(\alpha,\Sigma)$.
\end{definition}

The definition below of the Laplace transform on $\mathcal{S}_+^p$ is adapted from Definition 7.2.9 of \citet{MR652932}.

\begin{definition}
\label{def:Lt}
Let $\mu$ be a measure on $\mathcal{S}_+^p$. The Laplace transform of $\mu$, if it exists, is a functional defined, for every $T\in \mathcal{S}^p$, by
\[
\mathcal{L}(\mu)(T) = \int_{\mathcal{S}_+^p} \etr(-T X) \, \mu(\rd X).
\]
If $\mathfrak{X}$ is a $p \times p$ nonnegative definite random matrix with induced probability measure $\mu$ on $\mathcal{S}_{+}^p$, its Laplace transform is then written $\phi_{\mathfrak{X}}$ and defined, for every $T\in \mathcal{S}^p$, by
\[
\phi_{\mathfrak{X}}(T) = \mathcal{L}(\mu)(T).
\]
\end{definition}

\begin{remark}
\label{rem:Wishart.Lt}
Let the parameters $p\in \N$, $\alpha\in (p-1,\infty)$ and $\Sigma\in \mathcal{S}_{++}^p$ be given. If one has $\mathfrak{X}\sim\mathcal{W}_p(\alpha,\Sigma)$ as per Definition~\ref{def:Wishart}, then a renormalization of the density function shows that the Laplace transform of $\mathfrak{X}$ is given, for any $T\in \mathcal{S}^p$ such that $T + \Sigma^{-1}/2\in \mathcal{S}_{++}^p$, by
\[
\phi_{\mathfrak{X}}(T) = | I_p + 2 T \Sigma|^{-\alpha/2}.
\]
In particular, this expression is valid whenever $T\in \mathcal{S}_+^p$.
\end{remark}

In what follows, the notion of completely monotonic function recalled below will also play a role. See, e.g., Definition~1.3 of \citet{MR2978140} and \citet{MR1872377} for a review.

\begin{definition}
\label{def:CM}
A map $\phi: (0,\infty)\to [0,\infty)$ is called completely monotone if $\phi$ is infinitely differentiable and satisfies, for all $n\in \N$, $(-1)^n \phi^{(n)} \geq 0$.
\end{definition}

Bernstein's well-known theorem (see, e.g., Theorem~4.1.1 of \citet{MR72370}) establishes that Definition~\ref{def:CM} is equivalent to the existence of a measure $\mu$ on $[0,\infty)$ such that $\phi$ is expressible as the Laplace transform $\mathcal{L}(\mu)$. That is, for every $t\in (0,\infty)$, one has
\begin{equation}\label{eq:2.1}
\phi(t) = \mathcal{L}(\mu)(t) = \int_{[0,\infty)} e^{-t x} \mu(\rd x).
\end{equation}

Hence, the matrix-variate extension of the notion of complete monotonicity below is natural. The equivalence with the corresponding differential version of the definition, which appears, e.g., as Definition~2.1 of \citet{MR3286037}, is due to \citet{MR253009}.

\begin{definition}
\label{def:MCM}
A map $\phi : \mathcal{S}_{++}^p \to [0,\infty)$ is called matrix-variate completely monotone if there exists a measure $\mu$ on $\mathcal{S}_+^p$ such that $\phi$ is expressible as the Laplace transform $\mathcal{L}(\mu)$ as per Definition~\ref{def:Lt}, i.e., for all $T\in \mathcal{S}_{++}^p$, one has
\[
\phi(T) = \mathcal{L}(\mu)(T) = \int_{\mathcal{S}_+^p} \etr(-T X) \, \mu(\rd X).
\]
\end{definition}

Next, recall the definition of a real univariate Bernstein function (sometimes referred to as a completely monotone mapping or Laplace exponent by some probabilists); see, e.g., Definition~4.1.1 and Theorem~4.1.2 of \citet{MR72370}.

\begin{definition}
\label{def:B}
A map $g: (0,\infty)\to [0,\infty)$ is called a Bernstein function if $g$ is infinitely differentiable and satisfies $(-1)^{n-1} g^{(n)} \geq 0 $ for every integer $n \in \N$.
\end{definition}

This notion is extended below to the matrix-variate setting following the characterization of Bernstein functions on unions of affinely-placed orthants given in Theorem~4.2.4 of \citet{MR72370}.

\begin{definition}
\label{def:MB}
A map $g: \mathcal{S}_{++}^p \to [0,\infty)$ is called a matrix-variate Bernstein function with triplet $(A, B, \mu)$ if there exist a measure $\mu$ on $\mathcal{S}_{++}^p$ with \vspace{1mm}
\[
\smash{\int_{\mathcal{S}_{++}^p} \min\big[1, \{\mathrm{tr}(X^2)\}^{1/2}\big] \, \mu(\rd X) < \infty} \vspace{2mm}
\]
and real matrices $A, B\in \mathcal{S}_+^p$ such that, for all $T\in \mathcal{S}_{++}^p$,
\[
g(T) = \mathrm{tr}(A) + \mathrm{tr}(B T) + \int_{\mathcal{S}_{++}^p} \{1 - \etr(-T X)\} \, \mu(\rd X).
\]
Additionally, if $\mathfrak{X}$ is a $p \times p$ positive definite random matrix with induced probability measure $\mu$ on $\mathcal{S}_{++}^p$, then one can write, for all $T \in \mathcal{S}_{++}^p$,
\[
g(T) = \mathrm{tr}(A) + \mathrm{tr}(B T) + 1 - \EE\{\etr(-T \mathfrak{X})\} .
\]
\end{definition}

\begin{remark}\label{rem:Bernstein.cumulant}
According to \citet[pp.~156--157]{MR1155400} or \citet[Proposition~2.1]{MR2001835}, Definition~\ref{def:MB} is equivalent to the statement that the map $T\mapsto \mathrm{tr}(A) - g(T)$ is the cumulant transform of an infinitely divisible matrix distribution supported on the space of positive definite matrices $\mathcal{S}_{++}^p$.
\end{remark}

One way to construct a matrix-variate completely monotone function is through the composition of a (univariate) completely monotone function with a matrix-variate Bernstein function, as shown in the following proposition.

\begin{proposition}\label{prop:composition.MB.CM}
Let $g: \mathcal{S}_{++}^p \to [0,\infty)$ be a matrix-variate Bernstein function with triplet $(0_{p\times p},B,\mu)$, where $\mu$ is assumed to be a probability measure on $\mathcal{S}_{++}^p$. If the map $\phi : [0,\infty) \to [0,\infty)$ is continuous, completely monotone on $(0,\infty)$ and satisfies $\phi(0) = 1$, then the composition $\psi = \phi \circ g : \mathcal{S}_{++}^p \to [0,\infty)$ is a matrix-variate completely monotone function in the sense of Definition~\ref{def:MCM}.
\end{proposition}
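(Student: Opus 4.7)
The plan is to combine the classical Bernstein representation of $\phi$ with the infinitely divisible interpretation of $g$ described in Remark~\ref{rem:Bernstein.cumulant}. Since $\phi$ is completely monotone on $(0,\infty)$, continuous at $0$, and satisfies $\phi(0)=1$, Bernstein's theorem (see \eqref{eq:2.1}) delivers a probability measure $\nu$ on $[0,\infty)$ with
\[
\phi(t) = \int_{[0,\infty)} e^{-tx} \, \nu(\rd x), \qquad t \geq 0.
\]
Substituting $t = g(T)$, which is legitimate since $g$ is $[0,\infty)$-valued on $\mathcal{S}_{++}^p$, one obtains
\[
\psi(T) = \phi(g(T)) = \int_{[0,\infty)} e^{-x g(T)} \, \nu(\rd x), \qquad T \in \mathcal{S}_{++}^p.
\]

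Next, I would argue that for each fixed $x \geq 0$, the map $T \mapsto e^{-x g(T)}$ is the Laplace transform of a probability measure $\pi_x$ on $\mathcal{S}_+^p$ in the sense of Definition~\ref{def:Lt}. Because $A = 0_{p \times p}$ and $\mu$ is a probability measure with governing random matrix $\mathfrak{X}$, the representation in Definition~\ref{def:MB} reads $g(T) = \mathrm{tr}(BT) + 1 - \EE\{\etr(-T\mathfrak{X})\}$, and setting $h(T) = \EE\{\etr(-T\mathfrak{X})\}$ yields
\[
e^{-x g(T)} = \etr(-x B T) \cdot e^{-x} \cdot e^{x h(T)}
= \etr(-x B T) \sum_{n=0}^{\infty} e^{-x} \frac{x^n}{n!} \, \EE\{\etr(-T S_n)\},
\]
where $S_n = \mathfrak{X}_1 + \cdots + \mathfrak{X}_n$ is the sum of $n$ independent copies of $\mathfrak{X}$ (with $S_0 = 0$). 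This is precisely the Laplace transform of the compound Poisson random matrix $\mathfrak{Y}_x = x B + S_{N_x}$ with $N_x$ Poisson of mean $x$, independent of $(\mathfrak{X}_i)_{i \geq 1}$; call its law $\pi_x$, which is supported on $\mathcal{S}_+^p$. This provides a concrete realization of the infinite divisibility mentioned in Remark~\ref{rem:Bernstein.cumulant}.

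Finally, I would combine the two representations via Tonelli's theorem, which applies since all integrands are nonnegative:
\[
\psi(T) = \int_{[0,\infty)} \int_{\mathcal{S}_+^p} \etr(-T X) \, \pi_x(\rd X) \, \nu(\rd x) = \int_{\mathcal{S}_+^p} \etr(-T X) \, \widetilde{\mu}(\rd X),
\]
where $\widetilde{\mu}(\cdot) = \int_{[0,\infty)} \pi_x(\cdot) \, \nu(\rd x)$ is a probability measure on $\mathcal{S}_+^p$. By Definition~\ref{def:MCM}, this exhibits $\psi$ as a matrix-variate completely monotone function, completing the proof.

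The main technical step is the second one, namely the explicit construction of $\pi_x$, because it requires unpacking the Bernstein representation of $g$, exponentiating, and recognizing the resulting series as the Laplace transform of a compound Poisson distribution on $\mathcal{S}_+^p$; the first and third steps are essentially applications of Bernstein's theorem and Tonelli's theorem, respectively.
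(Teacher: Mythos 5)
Your proof is correct and follows essentially the same route as the paper's: apply Bernstein's theorem to obtain a probability measure $\nu$ representing $\phi$, expand $e^{-x g(T)}$ via the Poisson series to recognize it as the Laplace transform of a compound Poisson matrix $xB + S_{N_x}$, and mix over $x \sim \nu$ to exhibit $\psi$ as a Laplace transform. The paper packages the same computation probabilistically as a single expectation over $\Lambda \sim \nu$, $N \mid \Lambda \sim \mathcal{P}(\Lambda)$, and an i.i.d.\ sequence from $\mu$, whereas you build the conditional law $\pi_x$ first and then mix; this is a presentational, not a mathematical, difference.
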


Next, the notion of logarithmically complete monotonicity, as given in Definition~1 of \citet{MR2075188}, is extended to the matrix-variate setting; cf.\ Definition~5.10 and Theorem~5.11 of \citet{MR2978140}.

\begin{definition}
\label{def:MLCM}
A function $\psi : \mathcal{S}_{++}^p \to (0,1]$ is called matrix-variate logarithmically completely monotone if the negative of the logarithm of $\psi$, $-\ln  (\psi) : \mathcal{S}_{++}^p \to [0,\infty)$, is a matrix-variate Bernstein function in the sense of Definition~\ref{def:MB}.
\end{definition}

In the same way that the notion of logarithmically complete monotonicity on $(0,\infty)$ implies complete monotonicity on $(0,\infty)$, the fact that a function is matrix-variate logarithmically completely monotone on $\mathcal{S}_{++}^p$ implies that it is matrix-variate completely monotone on $\mathcal{S}_{++}^p$, as the following corollary states. The proof is an immediate consequence of Proposition~\ref{prop:composition.MB.CM} with the choice of functions $\phi(t) = \exp(-t)$ and $g(T) = -\ln \{\psi(T)\}$.

\begin{corollary}\label{cor:MLCM.implies.MCM}
If a function $\psi : \mathcal{S}_{++}^p \to (0,1]$ is matrix-variate logarithmically completely monotone, then it is matrix-variate completely monotone.
\end{corollary}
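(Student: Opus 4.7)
The plan is to invoke Proposition~\ref{prop:composition.MB.CM} directly with the concrete choices $g(T) = -\ln \psi(T)$ and $\phi(t) = e^{-t}$, so that $\psi(T) = \phi(g(T))$ on $\mathcal{S}_{++}^p$. By Definition~\ref{def:MLCM}, the assumption that $\psi$ is matrix-variate logarithmically completely monotone is precisely the statement that $g = -\ln \psi$ is matrix-variate Bernstein in the sense of Definition~\ref{def:MB}, so the hypothesis on $g$ required by Proposition~\ref{prop:composition.MB.CM} is available to us free of charge.

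Next I would verify the three conditions imposed on $\phi$ by Proposition~\ref{prop:composition.MB.CM}. Continuity of $\phi(t) = e^{-t}$ on $[0,\infty)$ and the normalization $\phi(0) = 1$ are immediate. For complete monotonicity on $(0,\infty)$, the derivatives satisfy $\phi^{(n)}(t) = (-1)^n e^{-t}$, so that $(-1)^n \phi^{(n)}(t) = e^{-t} \geq 0$ for every integer $n \in \N$, which is exactly the criterion in Definition~\ref{def:CM}. Feeding these verifications into Proposition~\ref{prop:composition.MB.CM} then yields that $\psi = \phi \circ g$ is matrix-variate completely monotone in the sense of Definition~\ref{def:MCM}, as claimed.

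The only point that requires a touch of care, and which I would view as the main bookkeeping obstacle, is matching the specific triplet form $(0_{p\times p}, B, \mu)$ with $\mu$ a probability measure that is assumed in Proposition~\ref{prop:composition.MB.CM}. If the Bernstein triplet of $g = -\ln \psi$ has some nonzero $A$ or an unnormalized L\'evy measure, one reduces to the clean case by absorbing the $\mathrm{tr}(A)$ summand and a normalizing constant into multiplicative factors $e^{-\mathrm{tr}(A)}$ and a positive scalar in front of the exponential, then recombining. Because finite products of matrix-variate completely monotone functions are again matrix-variate completely monotone (their Laplace representations combine via the pushforward of the product measure under matrix addition, using $\etr(-T(X+Y)) = \etr(-TX)\,\etr(-TY)$), this massaging does not affect the conclusion, and no nontrivial estimates are required beyond it.
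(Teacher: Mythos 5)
Your core argument matches the paper's proof exactly: invoke Proposition~\ref{prop:composition.MB.CM} with $\phi(t)=e^{-t}$ and $g=-\ln\psi$, after checking that $\phi$ is continuous, satisfies $\phi(0)=1$, and is completely monotone on $(0,\infty)$ via $(-1)^n\phi^{(n)}(t)=e^{-t}\geq 0$. You are also right to flag the mismatch between the general triplet $(A,B,\mu)$ permitted by Definitions~\ref{def:MLCM} and~\ref{def:MB} and the restricted triplet $(0_{p\times p},B,\mu)$ with $\mu$ a \emph{probability} measure assumed in Proposition~\ref{prop:composition.MB.CM}; the paper passes over this silently.

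Your proposed fix, however, only closes part of the gap. Absorbing $\mathrm{tr}(A)$ into a constant factor $e^{-\mathrm{tr}(A)}$ is fine, but rescaling ``a normalizing constant into a positive scalar in front of the exponential'' presupposes that the L\'evy measure $\mu$ is finite. Definition~\ref{def:MB} requires only $\int_{\mathcal{S}_{++}^p}\min\big[1,\{\mathrm{tr}(X^2)\}^{1/2}\big]\,\mu(\rd X)<\infty$, so $\mu$ may have infinite total mass (the matrix analogue of a stable subordinator's L\'evy measure). In that case there is no normalizing constant, the reduction to a probability measure fails, and Proposition~\ref{prop:composition.MB.CM} does not directly apply. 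A route that sidesteps the issue is to use Remark~\ref{rem:Bernstein.cumulant}: since $g$ is matrix-variate Bernstein, $T\mapsto\mathrm{tr}(A)-g(T)$ is the cumulant transform of an infinitely divisible law $\nu$ on $\mathcal{S}_{++}^p$, so $\psi(T)=e^{-g(T)}=e^{-\mathrm{tr}(A)}\,\mathcal{L}(\nu)(T)=\mathcal{L}\big(e^{-\mathrm{tr}(A)}\nu\big)(T)$, which is matrix-variate completely monotone directly from Definition~\ref{def:MCM}. Alternatively one can truncate $\mu$ to finite measures $\mu_n\uparrow\mu$ and pass to the limit in the Laplace representation; either way, the infinite-mass case needs an argument rather than a rescaling.
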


The following result is an easy consequence of Definition~\ref{def:MLCM} and Corollary~\ref{cor:MLCM.implies.MCM}.

\begin{corollary}\label{cor:exponential.MCM.implies.MLCM}
If a map $g : \mathcal{S}_{++}^p \to [0,\infty)$ is a matrix-variate Bernstein function, then $\psi = \exp(-g) : \mathcal{S}_{++}^p \to (0,1]$ is matrix-variate logarithmically completely monotone. In particular, by Corollary~\ref{cor:MLCM.implies.MCM}, it is also matrix-variate completely monotone.
\end{corollary}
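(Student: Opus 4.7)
The plan is to read off the conclusion almost directly from Definition~\ref{def:MLCM}. The only real substance of the corollary is to verify that the hypotheses of that definition are met by $\psi = \exp(-g)$, so the proof will consist of unpacking notation and performing a brief range check.

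First, I would check that $\psi$ has the correct codomain. Since $g$ is a matrix-variate Bernstein function in the sense of Definition~\ref{def:MB}, its values lie in $[0,\infty)$. Therefore $\psi(T) = \exp\{-g(T)\} \in (0,1]$ for every $T \in \mathcal{S}_{++}^p$, so $\psi$ is a well-defined map $\mathcal{S}_{++}^p \to (0,1]$, as required by Definition~\ref{def:MLCM}.

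Next, I would compute the negative logarithm: for every $T \in \mathcal{S}_{++}^p$, one has
\[
-\ln\{\psi(T)\} = -\ln[\exp\{-g(T)\}] = g(T).
\]
By hypothesis, $g : \mathcal{S}_{++}^p \to [0,\infty)$ is a matrix-variate Bernstein function. Hence $-\ln(\psi) = g$ is a matrix-variate Bernstein function, which is precisely the defining property required for $\psi$ to be matrix-variate logarithmically completely monotone per Definition~\ref{def:MLCM}.

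The final assertion then follows immediately by invoking Corollary~\ref{cor:MLCM.implies.MCM}, which says that matrix-variate logarithmic complete monotonicity implies matrix-variate complete monotonicity. There is no genuine obstacle in this argument; the corollary is essentially a tautological restatement of the definitions, and its real purpose is to record the useful fact that exponentials of matrix-variate Bernstein functions supply a convenient and easily produced family of matrix-variate completely monotone functions to which the paper's subsequent product-type inequalities can be applied.
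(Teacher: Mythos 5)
Your proof is correct and is exactly the argument the paper intends: since $g \ge 0$, the map $\psi = \exp(-g)$ takes values in $(0,1]$ and satisfies $-\ln(\psi) = g$, which is a matrix-variate Bernstein function by hypothesis, so Definition~\ref{def:MLCM} applies directly and Corollary~\ref{cor:MLCM.implies.MCM} supplies the final assertion. The paper itself treats this as an immediate consequence of the definitions and gives no separate proof.
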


The definition below extends to the matrix-variate setting the notion of multivariate Laplace transform order given, e.g., in Section~7.D.1 of~\citet{MR2265633}.

\begin{definition}
\label{def:MLtO}
Let $n \in \N$ and $q_1, \ldots, q_n \in \N$ be given positive integers. Let also $(\mathfrak{X}_1, \ldots, \mathfrak{X}_n)$ and $(\mathfrak{Y}_1, \ldots, \mathfrak{Y}_n)$ be two random vectors of matrices supported on the space $\mathcal{S}_+^{q_1} \times \dots \times \mathcal{S}_+^{q_n}$. Suppose that for all $T_i \in \mathcal{S}_+^{q_i}$ and $i \in \{1, \ldots, n\}$, one has
\[
\EE\left\{\prod_{i=1}^n \etr(- T_i \mathfrak{X}_i)\right\} \geq \EE\left\{\prod_{i=1}^n \etr(- T_i \mathfrak{Y}_i)\right\}.
\]
Then $(\mathfrak{X}_1, \ldots, \mathfrak{X}_n)$ is said to be smaller or equal to $(\mathfrak{Y}_1, \ldots, \mathfrak{Y}_n)$ in the matrix-variate Laplace transform order, which is denoted by $(\mathfrak{X}_1, \ldots, \mathfrak{X}_n) \preceq_{\mathrm{Lt}} (\mathfrak{Y}_1, \ldots, \mathfrak{Y}_n)$.
\end{definition}

Next, recall the definition of multivariate total positivity of order~2 ($\mathrm{MTP}_2$), as formulated, e.g., by \citet[p.~1036]{MR628759}.

\begin{definition}\label{def:MTP2}
Let $d\in \N$ and $A_1,\ldots,A_d \subseteq \mathbb{R}$ be given. A probability density function $f: \mathbb{R}^d \to [0,\infty)$ supported on $A = A_1 \times \dots \times A_d$ is called multivariate totally positive of order $2$, denoted $\mathrm{MTP}_2$, if for all vectors $\bb{x} = (x_1, \ldots, x_d), \bb{y} = (y_1, \ldots, y_d)\in A$, one has
\[
f(\bb{x} \vee \bb{y}) f(\bb{x} \wedge \bb{y}) \geq f(\bb{x}) f(\bb{y}),
\]
where $\bb{x} \vee \bb{y} = (\max(x_1,y_1), \ldots, \max(x_d, y_d))$ and $\bb{x} \wedge \bb{y} = (\min(x_1,y_1), \ldots, \min(x_d, y_d))$ are com\-ponent-wise maxima and minima, respectively.
\end{definition}

\section{Main results}\label{sec:main.results}

The proposition below is an extension of Theorem~7.D.6 of \citet{MR2265633} from the multivariate Laplace transform order to the matrix-variate Laplace transform order of Definition~\ref{def:MLtO}. The result shows that the new matrix-variate Laplace transform order can be leveraged to obtain inequalities for product moments of matrix-variate completely monotone functions of certain positive definite random matrices.

\begin{proposition}\label{prop:7.D.6.Shaked}
For any integers $n, q_1, \ldots, q_n \in \N$, let $(\mathfrak{X}_1, \ldots, \mathfrak{X}_n)$ and $(\mathfrak{Y}_1, \ldots, \mathfrak{Y}_n)$ be two random vectors supported on the space $\mathcal{S}_{++}^{q_1} \times \dots \times \mathcal{S}_{++}^{q_n}$. Then, one has
\[
(\mathfrak{X}_1, \ldots, \mathfrak{X}_n) \preceq_{\mathrm{Lt}} (\mathfrak{Y}_1, \ldots, \mathfrak{Y}_n) \quad \Leftrightarrow \quad \EE\left\{\prod_{i=1}^n \phi_i(\mathfrak{X}_i)\right\} \geq \EE\left\{\prod_{i=1}^n \phi_i(\mathfrak{Y}_i)\right\}
\]
for all matrix-variate completely monotone functions $\phi_i : \mathcal{S}_{++}^{q_i} \to [0,\infty)$ in the sense of Definition~\ref{def:MCM}, whenever the expectations are finite.
\end{proposition}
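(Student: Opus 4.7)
\medskip

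\noindent\textbf{Proof proposal for Proposition~\ref{prop:7.D.6.Shaked}.}

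The plan is to prove the two implications separately, exploiting the Bernstein-type integral representation of matrix-variate completely monotone functions provided by Definition~\ref{def:MCM}.

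For the implication ($\Leftarrow$), I would argue by specialization. Fix any $T_1 \in \mathcal{S}_+^{q_1},\ldots,T_n\in \mathcal{S}_+^{q_n}$ and take $\mu_i$ to be the Dirac measure at $T_i$ on $\mathcal{S}_+^{q_i}$; then the function $\phi_i : \mathcal{S}_{++}^{q_i} \to [0,\infty)$ defined by $\phi_i(X) = \etr(-T_i X)$ is matrix-variate completely monotone in the sense of Definition~\ref{def:MCM}, and both expectations are bounded by $1$, hence finite. Inserting these particular $\phi_i$'s into the right-hand inequality recovers exactly the defining inequality of the matrix-variate Laplace transform order from Definition~\ref{def:MLtO}, so that $(\mathfrak{X}_1,\ldots,\mathfrak{X}_n) \preceq_{\mathrm{Lt}} (\mathfrak{Y}_1,\ldots,\mathfrak{Y}_n)$.

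For the implication ($\Rightarrow$), assume that the matrix-variate Laplace transform order holds, and let $\phi_1,\ldots,\phi_n$ be matrix-variate completely monotone. By Definition~\ref{def:MCM}, for each $i\in\{1,\ldots,n\}$ there exists a nonnegative measure $\mu_i$ on $\mathcal{S}_+^{q_i}$ such that, for every $X_i \in \mathcal{S}_{++}^{q_i}$,
\[
\phi_i(X_i) = \int_{\mathcal{S}_+^{q_i}} \etr(-T_i X_i)\,\mu_i(\rd T_i).
\]
Substituting into the product $\prod_{i=1}^n \phi_i(\mathfrak{X}_i)$, interchanging the resulting $n$-fold integral against $\mu_1\otimes\cdots\otimes\mu_n$ with the expectation over the distribution of $(\mathfrak{X}_1,\ldots,\mathfrak{X}_n)$, and doing the same for the $\mathfrak{Y}_i$'s, I would obtain
\[
\EE\!\left\{\prod_{i=1}^n \phi_i(\mathfrak{X}_i)\right\} = \int \EE\!\left\{\prod_{i=1}^n \etr(-T_i \mathfrak{X}_i)\right\} \prod_{i=1}^n \mu_i(\rd T_i),
\]
and similarly with $\mathfrak{Y}_i$ in place of $\mathfrak{X}_i$. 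Integrating the pointwise inequality provided by the matrix-variate Laplace transform order against the nonnegative product measure $\mu_1\otimes\cdots\otimes\mu_n$ then yields the desired inequality.

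The only technical subtlety is the legitimacy of swapping the order of integration, but since all the integrands $\etr(-T_i X_i)$ are nonnegative on $\mathcal{S}_+^{q_i}\times \mathcal{S}_{++}^{q_i}$, Tonelli's theorem applies unconditionally, so the identity above is valid in $[0,\infty]$ irrespective of finiteness. The finiteness assumption in the statement is then used only to ensure that the final comparison takes place between finite numbers rather than collapsing to a trivial $\infty\geq \infty$ statement; no uniform integrability argument is required. This is the step I expect to need the most care in the write-up, but it is not a genuine obstacle given the nonnegativity of the Laplace kernel.
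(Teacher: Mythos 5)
Your proposal is correct and follows essentially the same route as the paper's proof: the forward direction uses the Bernstein-type representation $\phi_i = \mathcal{L}(\mu_i)$ from Definition~\ref{def:MCM} and an interchange of integration (the paper invokes Fubini, you note that Tonelli suffices by nonnegativity), while the converse specializes to $\phi_i(X) = \etr(-T_i X)$. The only cosmetic difference is that you explicitly observe these kernels lie in $(0,1]$ so finiteness is automatic, a small but welcome clarification.
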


As an example, the next proposition describes how vectors of disjoint diagonal blocks of Wishart random matrices are smaller than, or equal to, their independent counterparts in the matrix-variate Laplace transform order.

\begin{proposition}\label{prop:Wishart.diag.block.Lt.order}
For any integers $d, p_1, \ldots, p_d \in \N$, let $p = p_1 + \cdots + p_d$ and fix any $\alpha \in (p-1, \infty)$. Further let $\Sigma\in \mathcal{S}_{++}^p$ be any $p \times p$ positive definite matrix expressed in the form $\Sigma = (\Sigma_{ij})$, where for every integers $i, j \in \{ 1, \ldots, d \}$, the block $\Sigma_{ij}$ has size $p_i \times p_j$. For any given integer $k\in \{2,\ldots,d\}$, let also
\[
\mathfrak{X} \sim\mathcal{W}_p(\alpha,\Sigma), \quad \mathfrak{X}^{\star} \sim\mathcal{W}_p(\alpha, \Sigma_{1:k-1,1:k-1} \midoplus \Sigma_{k:d,k:d})
\]
be partitioned conformally with $\Sigma$ (meaning that $\mathfrak{X}_{ij}$ and $\mathfrak{X}^{\star}_{ij}$ have the same size as $\Sigma_{ij}$ for every $i, j \in \{ 1, \ldots, d \}$). Then, in the sense of Definition~\ref{def:MLtO}, one has both
\[
\bigoplus_{i=1}^d \mathfrak{X}_{ii} \preceq_{\mathrm{Lt}} \bigoplus_{i=1}^d \mathfrak{X}_{ii}^{\star} \quad \text{and} \quad (\mathfrak{X}_{11}, \ldots, \mathfrak{X}_{dd}) \preceq_{\mathrm{Lt}} (\mathfrak{X}^{\star}_{11}, \ldots, \mathfrak{X}^{\star}_{dd}).
\]
\end{proposition}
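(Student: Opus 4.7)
The plan is to reduce both claimed orderings to a single determinant inequality that turns out to be an instance of Fischer's inequality for symmetric positive definite block matrices.

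First, I would observe that the two orderings in the statement are in fact equivalent. For any $T \in \mathcal{S}_+^p$ and any $p\times p$ block-diagonal matrix $\bigoplus_{i=1}^d M_i$ with blocks of sizes $p_i$, one has $\mathrm{tr}(T\bigoplus_i M_i) = \sum_{i=1}^d \mathrm{tr}(T_{ii} M_i)$, which depends on $T$ only through its diagonal blocks $T_{11}, \ldots, T_{dd}$. Hence the first ordering for arbitrary $T\in \mathcal{S}_+^p$ is equivalent to the same statement restricted to block-diagonal $T = \bigoplus_i T_i$, and the latter is precisely the second ordering (the diagonal blocks of any $T\in \mathcal{S}_+^p$ automatically lie in $\mathcal{S}_+^{p_i}$, and conversely $\bigoplus_i T_i \in \mathcal{S}_+^p$ whenever each $T_i \in \mathcal{S}_+^{p_i}$). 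Thus it suffices to establish, for all $T_1\in \mathcal{S}_+^{p_1}, \ldots, T_d \in \mathcal{S}_+^{p_d}$, setting $\tilde{T} = \bigoplus_{i=1}^d T_i \in \mathcal{S}_+^p$, the inequality
\[
\EE\{\etr(-\tilde{T}\mathfrak{X})\} \geq \EE\{\etr(-\tilde{T}\mathfrak{X}^{\star})\}.
\]

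Second, I would invoke the explicit Wishart Laplace transform from Remark~\ref{rem:Wishart.Lt} to reduce the above to showing
\[
|I_p + 2\tilde{T}\Sigma| \leq |I_p + 2\tilde{T}\Sigma^{\star}|,
\]
where $\Sigma^{\star} = \Sigma_{1:k-1,1:k-1} \midoplus \Sigma_{k:d,k:d}$. Applying Sylvester's determinant identity on each side then yields the symmetrized equivalent $|I_p + 2\tilde{T}^{1/2}\Sigma\tilde{T}^{1/2}| \leq |I_p + 2\tilde{T}^{1/2}\Sigma^{\star}\tilde{T}^{1/2}|$. Since $\tilde{T}^{1/2}$ is block-diagonal with respect to the partition $(p_1, \ldots, p_d)$, it is also block-diagonal with respect to the coarser two-block partition of sizes $q = p_1 + \cdots + p_{k-1}$ and $r = p_k + \cdots + p_d$; consequently, $I_p + 2\tilde{T}^{1/2}\Sigma^{\star}\tilde{T}^{1/2}$ is itself $2 \times 2$ block-diagonal in this coarser partition, and its two diagonal blocks coincide exactly with the $(1,1)$ and $(2,2)$ blocks of $I_p + 2\tilde{T}^{1/2}\Sigma\tilde{T}^{1/2}$.

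Third, I would finish by applying Fischer's determinant inequality to the symmetric positive definite $2\times 2$ block matrix $I_p + 2\tilde{T}^{1/2}\Sigma\tilde{T}^{1/2}$: its determinant is bounded above by the product of the determinants of its diagonal blocks, which equals $|I_p + 2\tilde{T}^{1/2}\Sigma^{\star}\tilde{T}^{1/2}|$. Raising the resulting inequality to the power $-\alpha/2$ reverses the direction and delivers both orderings simultaneously. The conceptual step—recognizing that, after symmetrization via $\tilde{T}^{1/2}$, the desired Laplace-transform ordering is exactly a Fischer inequality—is the main insight; the surrounding manipulations are routine.
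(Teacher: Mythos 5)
Your proposal is correct and follows essentially the same route as the paper: compute the Wishart Laplace transforms explicitly via Remark~\ref{rem:Wishart.Lt}, reduce the Laplace transform ordering to a determinant inequality, and establish that inequality by Fischer's inequality for positive definite block matrices, then observe that the two stated orderings coincide because $\mathrm{tr}(T\bigoplus_i M_i)$ depends on $T$ only through its diagonal blocks. The only cosmetic difference is that you make explicit the Sylvester symmetrization step $|I_p+2\tilde T\Sigma|=|I_p+2\tilde T^{1/2}\Sigma\tilde T^{1/2}|$ needed before invoking Fischer's inequality (which the paper leaves implicit), and you note the equivalence of the two orderings up front rather than deriving the second from the first at the end.
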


The following result extends in two ways Theorem~3.2 of \citet{MR3278931}, which is listed in item~\ref{item:n} of Section~\ref{sec:intro}. First, the disjoint diagonal blocks $\mathfrak{X}_{ii}$ are $p_i\times p_i$ for each $i\in \{1, \ldots, d\}$, instead of only $1\times 1$ --- recall that multivariate Gamma random vectors in the sense of \citet{MR44790} are diagonal elements of Wishart random matrices. Second, matrix-variate completely monotone functions $\phi_i : \mathcal{S}_{++}^{p_i} \to [0,\infty)$ replace the inverse power functions $x_i\mapsto x_i^{-\nu_i}$ for $i \in \{ 1, \ldots, d \}$.

\begin{theorem}\label{thm:generalization.Theorem.3.2.Wei}
For any integers $d, p_1, \ldots, p_d \in \N$, let $p = p_1 + \cdots + p_d$ and fix any $\alpha \in (p-1, \infty)$. Further let $\Sigma\in \mathcal{S}_{++}^p$ be any $p \times p$ positive definite matrix expressed in the form $\Sigma = (\Sigma_{ij})$, where for every integers $i, j \in \{ 1, \ldots, d \}$, the block $\Sigma_{ij}$ has size $p_i \times p_j$. Moreover, for each integer $i \in \{ 1, \ldots, d\}$, let $\phi_i : \mathcal{S}_{++}^{p_i} \to [0,\infty)$ be an arbitrary matrix-variate completely monotone function. Let $M$ be the unique block lower triangular matrix of size $p\times p$ defined through the block Cholesky decomposition $\Sigma = M M^{\top}$, where the diagonal block $M_{ii}$ is chosen to be positive definite for each $i \in \{1,\ldots,d\}$. For any $p \times p$ Wishart random matrix $\mathfrak{X}\sim\mathcal{W}_p (\alpha,\Sigma)$ partitioned conformally with~$\Sigma$, and any given integer $k\in \{2,\ldots,d\}$, there exists a product measure $\mu_1 \times \dots \times \mu_d$ on $\mathcal{S}_+^{p_1} \times \dots \times \mathcal{S}_+^{p_d}$ such that
\[
\EE\left\{\prod_{i=1}^{k-1} \phi_i(\mathfrak{X}_{ii})\right\} \EE\left\{\prod_{i=k}^d \phi_i(\mathfrak{X}_{ii})\right\}
\leq \EE\left\{\prod_{i=1}^d \phi_i(\mathfrak{X}_{ii})\right\}
\leq \prod_{i=1}^d \int_{\mathcal{S}_+^{p_i}} \frac{2^{p_i \alpha/2} \mu_i(\rd T_{ii})}{\big|I_{p_i} + \sqrt{2} \, T_{ii}^{1/2} M_{ii}\big|^{\alpha}},
\]
provided that the expectations and integrals are finite.
\end{theorem}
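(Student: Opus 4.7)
My plan for the left-hand inequality is to combine Proposition~\ref{prop:7.D.6.Shaked} with the matrix-variate Laplace transform ordering of Proposition~\ref{prop:Wishart.diag.block.Lt.order}. Letting $\mathfrak{X}^{\star} \sim \mathcal{W}_p(\alpha, \Sigma_{1:k-1,1:k-1} \midoplus \Sigma_{k:d,k:d})$ be partitioned conformally with $\Sigma$, the ordering $(\mathfrak{X}_{11}, \ldots, \mathfrak{X}_{dd}) \preceq_{\mathrm{Lt}} (\mathfrak{X}^{\star}_{11}, \ldots, \mathfrak{X}^{\star}_{dd})$ fed into Proposition~\ref{prop:7.D.6.Shaked} yields $\EE\{\prod_{i=1}^d \phi_i(\mathfrak{X}_{ii})\} \geq \EE\{\prod_{i=1}^d \phi_i(\mathfrak{X}^{\star}_{ii})\}$. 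Because the scale matrix of $\mathfrak{X}^{\star}$ is block diagonal, the blocks $(\mathfrak{X}^{\star}_{11}, \ldots, \mathfrak{X}^{\star}_{k-1,k-1})$ are independent from $(\mathfrak{X}^{\star}_{kk}, \ldots, \mathfrak{X}^{\star}_{dd})$, and the Wishart marginal property yields $\mathfrak{X}^{\star}_{ii} \stackrel{\mathrm{d}}{=} \mathfrak{X}_{ii}$ for each $i$, so the right-hand side factors into the claimed product of expectations.

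\textbf{Upper bound: reduction to a deterministic estimate.} Using the representation in Definition~\ref{def:MCM}, I would write $\phi_i(\mathfrak{X}_{ii}) = \int_{\mathcal{S}_+^{p_i}} \etr(-T_{ii} \mathfrak{X}_{ii}) \, \mu_i(\rd T_{ii})$ for each $i$ and assemble the integration variables into the block-diagonal matrix $T = \bigoplus_{i=1}^d T_{ii}$. Since $\mathrm{tr}(T \mathfrak{X}) = \sum_{i=1}^d \mathrm{tr}(T_{ii} \mathfrak{X}_{ii})$, Tonelli's theorem combined with the Wishart Laplace transform from Remark~\ref{rem:Wishart.Lt} rewrites the expectation as
\[
\EE\left\{\prod_{i=1}^d \phi_i(\mathfrak{X}_{ii})\right\} = \int_{\mathcal{S}_+^{p_1}} \cdots \int_{\mathcal{S}_+^{p_d}} |I_p + 2 T \Sigma|^{-\alpha/2} \, \mu_1(\rd T_{11}) \cdots \mu_d(\rd T_{dd}).
\]
Setting $N = T^{1/2} M$, which inherits the block lower triangular structure of $M$ with diagonal blocks $T_{ii}^{1/2} M_{ii}$ since $T^{1/2}$ is block diagonal, Sylvester's determinant identity yields $|I_p + 2 T \Sigma| = |I_p + 2 N N^{\top}|$, while the triangular structure of $N$ gives $|I_p + \sqrt{2}\, N| = \prod_{i=1}^d |I_{p_i} + \sqrt{2}\, T_{ii}^{1/2} M_{ii}|$. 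After another application of Tonelli, the target upper bound therefore reduces to the pointwise estimate $2^p\, |I_p + 2 N N^{\top}| \geq |I_p + \sqrt{2}\, N|^2$ for real square matrices.

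\textbf{The deterministic inequality.} The key step I foresee is to exploit the pair of algebraic identities
\[
(I_p \pm \sqrt{2}\, N)(I_p \pm \sqrt{2}\, N^{\top}) = (I_p + 2 N N^{\top}) \pm \sqrt{2}\,(N + N^{\top}).
\]
Abbreviating $C = I_p + 2 N N^{\top}$ and $D = \sqrt{2}\,(N + N^{\top})$, the minus-sign identity exhibits $C - D = (I_p - \sqrt{2}\, N)(I_p - \sqrt{2}\, N)^{\top}$ as a Gram matrix, hence positive semidefinite, so $D \preceq C$ in the Loewner order. Consequently $0 \preceq C + D \preceq 2C$, and the monotonicity of the determinant on the positive semidefinite cone gives $|C + D| \leq |2 C| = 2^p\, |C|$; the plus-sign identity simultaneously identifies $|C + D|$ with $|I_p + \sqrt{2}\, N|^2$, completing the proof since $|C| = |I_p + 2 T \Sigma|$. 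The main obstacle I anticipate is precisely the discovery of the symmetric factorization $C - D = (I_p - \sqrt{2}\, N)(I_p - \sqrt{2}\, N)^{\top}$, which converts an opaque determinant comparison into a transparent Loewner-order statement; once it is in place, the remainder of the argument is routine matrix algebra, and the finiteness assumption in the theorem ensures that all manipulations are legitimate.
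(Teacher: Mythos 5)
Your proof is correct and follows essentially the same route as the paper: the lower bound via Propositions~\ref{prop:7.D.6.Shaked} and~\ref{prop:Wishart.diag.block.Lt.order} together with the Wishart marginal property, and the upper bound via the Laplace-transform representation, Fubini/Tonelli, and the same determinantal estimate. Your pair of $\pm$ identities and the resulting Loewner comparison $D \preceq C$ is a restatement of the paper's observation that $(I_p + V^{\top}V) - \tfrac{1}{2}(I_p + V)^{\top}(I_p + V) = \tfrac{1}{2}(I_p - V)^{\top}(I_p - V) \succeq 0$ (with $V = \sqrt{2}\,N$), so the key step coincides; the only slight imprecision is that for the lower bound you invoke the per-block marginal equality $\mathfrak{X}^{\star}_{ii} \stackrel{\mathrm{d}}{=} \mathfrak{X}_{ii}$, whereas what is actually used is the equality of the \emph{joint} laws of the grouped blocks $(\mathfrak{X}_{11},\ldots,\mathfrak{X}_{k-1,k-1})$ and $(\mathfrak{X}^{\star}_{11},\ldots,\mathfrak{X}^{\star}_{k-1,k-1})$ (and likewise for indices $k,\ldots,d$), which Corollary~3.2.6 of Muirhead supplies.
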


Here is an example of a matrix-variate completely monotone function, among others. Using the fact that the $\mathcal{W}_{p_i}(2\nu_i,T^{-1}/2)$ density function integrates to $1$ on $\mathcal{S}_{++}^{p_i}$, it is straightforward to show that for any integer $i \in \{1, \ldots, d\}$, $\nu_i > (p_i - 1)/2$, and positive definite matrix $T \in \mathcal{S}_{++}^{p_i}$, one has
\begin{equation}\label{eq:3.1}
|T|^{-\nu_i} = \int_{\mathcal{S}_{++}^{p_i}} \etr(-T X) \, \frac{|X|^{\nu_i - (p_i + 1)/2}}{\Gamma_{p_i}(\nu_i)} \, \rd X = \mathcal{L}(\mu_i)(T),
\end{equation}
where the measures $\mu_i$ on $\mathcal{S}_{++}^{p_i}$ are defined by
\[
\mu_i(\rd X) = \frac{|X|^{\nu_i - (p_i + 1)/2}}{\Gamma_{p_i}(\nu_i)} \, \rd X.
\]
Equation~\eqref{eq:3.1} implies that, for $\nu_i > (p_i - 1)/2$, the map $T\mapsto |T|^{-\nu_i}$ is matrix-variate completely monotone on $\mathcal{S}_{++}^{p_i}$ as per Definition~\ref{def:MCM}. Therefore, as an application of Theorem~\ref{thm:generalization.Theorem.3.2.Wei}, the following corollary ensues.

\begin{corollary}\label{cor:GPI.negative.powers.determinants.Wishart}
In the setting of Theorem~\ref{thm:generalization.Theorem.3.2.Wei}, one has
\[
\begin{aligned}
\EE\left(\prod_{i=1}^{k-1} |\mathfrak{X}_{ii}|^{-\nu_i}\right) \EE\left(\prod_{i=k}^d |\mathfrak{X}_{ii}|^{-\nu_i}\right)
&\leq \EE\left(\prod_{i=1}^d |\mathfrak{X}_{ii}|^{-\nu_i}\right) \\
&\leq \prod_{i=1}^d \frac{2^{p_i \alpha/2}}{\Gamma_{p_i}(\nu_i)} \int_{\mathcal{S}_{++}^{p_i}} \frac{|T_{ii}|^{\nu_i - (p_i + 1)/2}}{\big|I_{p_i} + \sqrt{2} \, T_{ii}^{1/2} M_{ii}\big|^{\alpha}} \, \rd T_{ii},
\end{aligned}
\]
provided that $(p_i - 1)/2 < \nu_i < \alpha/2 - (p_i - 1)/2$ for all $i\in \{1,\ldots,d\}$.
\end{corollary}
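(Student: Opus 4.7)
The plan is to apply Theorem~\ref{thm:generalization.Theorem.3.2.Wei} directly, specializing each $\phi_i$ to the inverse determinant power $T \mapsto |T|^{-\nu_i}$. The argument reduces to three tasks: certifying that these functions are matrix-variate completely monotone, identifying the representing measures $\mu_i$, and checking that all expectations and integrals involved are finite.

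For the first two tasks, I would invoke identity~\eqref{eq:3.1}, which is established immediately before the corollary. When $\nu_i > (p_i-1)/2$, the multivariate gamma function $\Gamma_{p_i}(\nu_i)$ is defined and the Wishart-type density $|X|^{\nu_i - (p_i+1)/2}/\Gamma_{p_i}(\nu_i)$ is locally integrable at the boundary of $\mathcal{S}_{++}^{p_i}$. A direct renormalization of the $\mathcal{W}_{p_i}(2\nu_i, T^{-1}/2)$ density yields
\[
|T|^{-\nu_i} = \int_{\mathcal{S}_{++}^{p_i}} \etr(-TX)\,\mu_i(\rd X), \qquad \mu_i(\rd X) = \frac{|X|^{\nu_i - (p_i + 1)/2}}{\Gamma_{p_i}(\nu_i)} \, \rd X.
\]
By Definition~\ref{def:MCM}, this identifies $T\mapsto |T|^{-\nu_i}$ as matrix-variate completely monotone on $\mathcal{S}_{++}^{p_i}$ with the displayed $\mu_i$ as its (unique) representing Laplace transform measure.

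I would then feed these specific choices of $\phi_i$ and $\mu_i$ into Theorem~\ref{thm:generalization.Theorem.3.2.Wei} for any integer $k \in \{2,\ldots,d\}$. The left inequality of the theorem becomes the lower bound of the corollary verbatim. For the upper bound, substituting the explicit form of $\mu_i$ into the product integral on the right-hand side of the theorem produces exactly
\[
\prod_{i=1}^d \frac{2^{p_i \alpha/2}}{\Gamma_{p_i}(\nu_i)} \int_{\mathcal{S}_{++}^{p_i}} \frac{|T_{ii}|^{\nu_i - (p_i+1)/2}}{\big|I_{p_i} + \sqrt{2}\,T_{ii}^{1/2} M_{ii}\big|^{\alpha}}\,\rd T_{ii},
\]
which is the stated bound.

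The main obstacle is the finiteness bookkeeping needed to legitimately apply the theorem. The lower bound $\nu_i > (p_i-1)/2$ is precisely what makes the integral representation in~\eqref{eq:3.1} meaningful. For the joint and marginal expectations on the left to be finite, the standard Wishart determinantal moment formula (Lemma~\ref{lem:determinant.power.moments}, cited in Remark~\ref{rem:exect.finite}) forces $\nu_i < \alpha/2 - (p_i-1)/2$, and H\"older's inequality then transfers finiteness from the individual moments $\EE(|\mathfrak{X}_{ii}|^{-\nu_i})$ to the full joint moment. Combined, the two bounds yield the sandwich condition in the statement. Finally, the convergence of each integral appearing in the upper bound has to be checked both near the origin of $\mathcal{S}_{++}^{p_i}$ (controlled by $\nu_i > (p_i-1)/2$) and at infinity (controlled by $\nu_i < \alpha/2 - (p_i-1)/2$); this integral analysis is precisely what is carried out in~\ref{app:C} and is where the sandwich condition on $\nu_i$ is fully used.
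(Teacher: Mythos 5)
Your approach is exactly the paper's: specialize Theorem~\ref{thm:generalization.Theorem.3.2.Wei} to $\phi_i(T) = |T|^{-\nu_i}$ using identity~\eqref{eq:3.1} to certify complete monotonicity (valid for $\nu_i > (p_i-1)/2$) and to read off $\mu_i$, after which the lower and upper bounds are literal substitutions, and the finiteness of the expectations under the sandwich condition is supplied by Lemma~\ref{lem:finite.exp} (via H\"older or by iterating the lower bound).

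One factual slip in your closing paragraph: the convergence of $\int_{\mathcal{S}_{++}^{p_i}} |T_{ii}|^{\nu_i - (p_i+1)/2}\,\big|I_{p_i} + \sqrt{2}\,T_{ii}^{1/2} M_{ii}\big|^{-\alpha}\,\rd T_{ii}$ at infinity is \emph{not} governed by $\nu_i < \alpha/2 - (p_i-1)/2$. As shown in~\ref{app:C}, finiteness of that integral requires the strictly stronger condition $\nu_i < \alpha/2 - p_i(p_i+1)/4$, and since $p_i(p_i+1)/4 > (p_i-1)/2$ for every $p_i \in \N$, the corollary's hypothesis does \emph{not} guarantee that the right-hand side is finite. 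This does not damage the statement --- when the integrals diverge the upper bound is vacuously $+\infty$, and the pointwise bound in the proof of Theorem~\ref{thm:generalization.Theorem.3.2.Wei} is valid regardless --- but the claim that the sandwich condition controls the integral's tail, and that this is ``where the sandwich condition on $\nu_i$ is fully used,'' is incorrect; the sandwich condition is the one that controls the expectations, as Remark~\ref{rem:finite.exp} makes explicit.
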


\begin{remark}\label{rem:finite.exp}
In Corollary~\ref{cor:GPI.negative.powers.determinants.Wishart}, the condition $(p_i - 1)/2 < \nu_i$ for all $i\in \{1,\ldots,d\}$ is sufficient for the maps $T\mapsto |T|^{-\nu_i}$ to be matrix-variate completely monotone and thus for the application of Theorem~\ref{thm:generalization.Theorem.3.2.Wei}. Lemma~\ref{lem:finite.exp} of \ref{app:B} shows that a sufficient condition for the finiteness of the expectations is $(p_i - 1)/2 < \nu_i < \alpha/2 - (p_i - 1)/2$ for all $i\in \{1,\ldots,d\}$, while the condition $\nu_i < \alpha/2 - (p_i - 1)/2$ for all $i\in \{1,\ldots,d\}$ is necessary. \ref{app:C} shows that the integrals on the right-hand side are finite if and only if $(p_i - 1)/2 < \nu_i < \alpha/2 - p_i (p_i + 1)/4$ for all $i\in \{1,\ldots,d\}$. In particular, these restrictions are coherent as it is always true that $\alpha/2 - p_i (p_i + 1)/4 < \alpha/2 - (p_i - 1)/2$.
\end{remark}

\begin{remark}
The lower bound in Corollary~\ref{cor:GPI.negative.powers.determinants.Wishart} extends Corollary~1~(a) in \cite{MR4538422} for traces of disjoint diagonal blocks of Wishart random matrices to determinants of such blocks. The lower and upper bounds together extend significantly Theorem~3.2 of \citet{MR3278931}, where the same two inequalities were proved for diagonal elements of Wishart random matrices (i.e., $p_1 = \dots = p_d = 1$ or $1 \times 1$ diagonal blocks), which are known to follow the multivariate Gamma distribution in the sense of \citet{MR44790}. This distribution was studied extensively by Royen; see, e.g., \citet{MR3325368} for a survey of its properties.
\end{remark}

\begin{remark}
In order to render the upper bound in Corollary~\ref{cor:GPI.negative.powers.determinants.Wishart} practicable, explicit expressions for the integrals are derived in \ref{app:C}.
\end{remark}

As explained in Remark~4.2 of \citet{MR4554766}, an alternative way of showing the lower bound in Corollary~\ref{cor:GPI.negative.powers.determinants.Wishart} would be to prove a Gaussian correlation-type inequality for disjoint principal minors of Wishart random matrices. For the aforementioned multivariate Gamma distribution, this Gaussian correlation inequality was proved by Royen in 2014; see \citet{MR3289621} and also \citet{MR3468024,arXiv:2006.00769} for some extensions. Consequently, it is natural to conjecture (see Conjecture~\ref{conj:1} below) that the same inequality holds more broadly for disjoint diagonal blocks of any widths $p_1, \ldots, p_d\in \N$.

\begin{conjecture}\label{conj:1}
In the setting of Theorem~\ref{thm:generalization.Theorem.3.2.Wei}, one has, for every $k\in \{2,\ldots,d\}$ and $(t_1, \ldots, t_d)\in (0,\infty)^d$,
\[
\PP\left(\bigcap_{i=1}^d \{|\mathfrak{X}_{ii}| \leq t_i\}\right) \geq \PP\left(\bigcap_{i=1}^{k-1} \{|\mathfrak{X}_{ii}| \leq t_i\}\right) \PP\left(\bigcap_{i=k}^d \{|\mathfrak{X}_{ii}| \leq t_i\}\right).
\]
\end{conjecture}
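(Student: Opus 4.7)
My plan is to adapt \citet{MR3289621}'s interpolation argument, by which the corresponding Gaussian correlation inequality was settled in the multivariate Gamma ($p_i = 1$) case, to the present block-matrix setting. Fix the partition index $k\in\{2,\ldots,d\}$, set $\Sigma^{\star} = \Sigma_{1:k-1,1:k-1} \midoplus \Sigma_{k:d,k:d}$, and consider the one-parameter family $\Sigma_\lambda = (1-\lambda)\Sigma^{\star} + \lambda\Sigma$ for $\lambda\in[0,1]$. This family stays in $\mathcal{S}_{++}^p$ and preserves every diagonal block $\Sigma_{ii}$, since these appear identically in both $\Sigma^{\star}$ and $\Sigma$. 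Let $\mathfrak{X}^{(\lambda)}\sim\mathcal{W}_p(\alpha,\Sigma_\lambda)$ and put
\[
F(\lambda) = \PP\!\left(\bigcap_{i=1}^d \{|\mathfrak{X}_{ii}^{(\lambda)}| \leq t_i\}\right).
\]
Because $\mathfrak{X}^{(0)}$ decomposes into two independent Wishart blocks of sizes $p_1+\cdots+p_{k-1}$ and $p_k+\cdots+p_d$ with scale matrices $\Sigma_{1:k-1,1:k-1}$ and $\Sigma_{k:d,k:d}$, we have $F(0) = \PP(\cap_{i=1}^{k-1}\{|\mathfrak{X}_{ii}|\leq t_i\}) \cdot \PP(\cap_{i=k}^d\{|\mathfrak{X}_{ii}|\leq t_i\})$, while $F(1)$ is the left-hand side of the conjecture. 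Thus the conjecture reduces to showing $F'(\lambda)\geq 0$ on $(0,1)$.

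I would then differentiate $F(\lambda)$ under the integral using the explicit Wishart density of Definition~\ref{def:Wishart}, producing an integrand built from derivatives of $|\Sigma_\lambda|^{-\alpha/2}\,\etr(-\Sigma_\lambda^{-1}X/2)$ with respect to the off-diagonal blocks of $\Sigma_\lambda$ (the only entries that depend on $\lambda$). After one or more integrations by parts on the full matrix variable $X$, the goal is to rewrite $F'(\lambda)$ as a manifestly non-negative mixture of probabilities. A dual route is to work with the Wishart Laplace transform $|I_p+2T\Sigma_\lambda|^{-\alpha/2}$ of Remark~\ref{rem:Wishart.Lt}: one recovers the joint CDF of the block determinants by a Mellin--Laplace inversion and tries to verify the inequality directly at the transform level, exploiting that $|I_p+2T\Sigma_\lambda|$ is multilinear in the blocks of $\Sigma_\lambda$ and that Proposition~\ref{prop:Wishart.diag.block.Lt.order} already furnishes the $\lambda=0$ versus $\lambda=1$ comparison in the matrix-variate Laplace transform order.

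The main obstacle is that, unlike the scalar case, $|\mathfrak{X}_{ii}|$ is a nonlinear function of the entries of $\mathfrak{X}$, so $\{|\mathfrak{X}_{ii}|\leq t_i\}$ is a level set of a degree-$p_i$ polynomial rather than a coordinate half-space; this destroys the combinatorial identity on permanents of covariance matrices that drives Royen's original proof. A plausible detour is to first establish an MTP$_2$-type monotonicity for the matrix-valued tuple $(\mathfrak{X}_{11},\ldots,\mathfrak{X}_{dd})$ with respect to the Loewner order, using the block-Cholesky representation $\mathfrak{X}=(MY)(MY)^\top$ with $Y$ a $p\times\alpha$ standard Gaussian matrix, and then transfer the resulting lower-orthant inequality to determinants via the Loewner monotonicity of $X\mapsto|X|$ on $\mathcal{S}_{++}^{p_i}$. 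A natural warm-up is the case $d=2$, where the matrix-hypergeometric moment formula of \citet{arXiv:2409.14512} mentioned after Conjecture~\ref{conj:1.1} should allow an inversion-type proof of the joint CDF inequality; success there would then suggest trying to lift to general $d$ inductively by conditioning on the first diagonal block.
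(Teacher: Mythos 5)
This statement is Conjecture~\ref{conj:1} in the paper, not a proved theorem: the authors explicitly introduce it as an open problem motivated by Royen's Gaussian correlation inequality for the multivariate Gamma ($p_i=1$) case, and they offer no proof. So there is no paper proof to compare against, and accordingly you should be judged on whether your argument, standing alone, actually closes the conjecture. It does not.

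Your submission is a research plan rather than a proof. The set-up is sound: the interpolation $\Sigma_\lambda = (1-\lambda)\Sigma^\star + \lambda\Sigma$ stays positive definite, preserves all within-group blocks (not just the diagonal ones, as you state), and gives the correct factorization $F(0) = \PP(\cap_{i<k})\,\PP(\cap_{i\geq k})$ by Corollary~3.2.6 of \citet{MR652932}; $F(1)$ is the left-hand side. But the decisive step $F'(\lambda)\geq 0$ is announced as a ``goal'' and never established. You yourself identify the obstacle that makes this nontrivial --- $\{|\mathfrak{X}_{ii}|\leq t_i\}$ is a sublevel set of a degree-$p_i$ polynomial, not a coordinate half-space --- and that obstacle is left standing. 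The two escape routes you sketch are speculative and, in the first case, technically problematic: $\mathrm{MTP}_2$ as used in Definition~\ref{def:MTP2} requires a lattice structure (componentwise $\vee$ and $\wedge$), and the Loewner order on $\mathcal{S}_{++}^{p_i}$ is not a lattice, so it is not even clear what ``$\mathrm{MTP}_2$ with respect to the Loewner order'' should mean; one would have to formulate and prove a new positive-dependence concept from scratch. The $d=2$ warm-up via the matrix hypergeometric representation of \citet{arXiv:2409.14512} is a reasonable suggestion, but it remains only a suggestion. In short: you have correctly located the crux and reproduced a plausible strategy template, but the conjecture remains open in your write-up just as it does in the paper.
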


The proposition below is a generalization of Theorem~1.1 of \citet{MR4666255} from the multivariate normal setting to the case of disjoint principal minors of Wishart random matrices. It is part of the realm of so-called `opposite GPIs' introduced by \citet{MR4471184}, which are described in Section~\ref{sec:intro}. Note that the validity of the proposition is conditional on Conjecture~\ref{conj:1.1} being true. \citet{MR4666255} proved the result below unconditionally in the multivariate normal setting, but only when $\nu_2 = \cdots = \nu_d = 1$, by exploiting a GPI result of \citet{MR2385646}; see item~\ref{item:b} in Section~\ref{sec:intro}.

\begin{proposition}\label{prop:analogue.Theorem.1.1.Zhou.et.al}
For any integers $d, p_1, \ldots, p_d \in \N$, let $p = p_1 + \cdots + p_d$ and fix any $\alpha \in (p-1, \infty)$. Further let $\Sigma\in \mathcal{S}_{++}^p$ be any $p \times p$ positive definite matrix expressed in the form $\Sigma = (\Sigma_{ij})$, where for every integers $i, j \in \{ 1, \ldots, d \}$, the block $\Sigma_{ij}$ has size $p_i \times p_j$. For each $i\in \{1,\ldots,d\}$, define $P_{1i} = \smash{\Sigma_{11}^{-1/2} \Sigma_{1i} \Sigma_{ii}^{-1/2}}$. Let $\mathfrak{X}\sim\mathcal{W}_p (\alpha,\Sigma)$ be a $p \times p$ Wishart random matrix partitioned conformally with $\Sigma$. Then conditionally on Conjecture~\ref{conj:1.1} being true, one has
\[
\EE\left(|\mathfrak{X}_{11}|^{-\nu_1} \prod_{i=2}^d |\mathfrak{X}_{ii}|^{\nu_i}\right) \geq \left(\prod_{i=2}^d |I_{p_i} - P_{1i}^{\top} P_{1i}|^{\nu_i}\right) \EE(|\mathfrak{X}_{11}|^{-\nu_1}) \prod_{i=2}^d \EE(|\mathfrak{X}_{ii}|^{\nu_i}),
\]
for any $(p_1 - 1)/2 < \nu_1 < \alpha/2 - (p_1 - 1)/2$ and $\nu_2, \ldots, \nu_d \in (0,\infty)$.
\end{proposition}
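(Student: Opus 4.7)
My plan is to use the Laplace-type integral representation of inverse determinant powers from equation~\eqref{eq:3.1} to reduce the inequality to a positive-power statement for a \emph{tilted} central Wishart matrix, to which Conjecture~\ref{conj:1.1} can be invoked directly. Under the hypothesis $\nu_1 > (p_1-1)/2$,
\[
|\mathfrak{X}_{11}|^{-\nu_1} = \frac{1}{\Gamma_{p_1}(\nu_1)} \int_{\mathcal{S}_{++}^{p_1}} |T_1|^{\nu_1 - (p_1+1)/2} \etr(-T_1 \mathfrak{X}_{11}) \, \rd T_1,
\]
and Fubini reduces the problem to a pointwise lower bound on the mixed expectation $\EE[\etr(-T_1 \mathfrak{X}_{11}) \prod_{i=2}^d |\mathfrak{X}_{ii}|^{\nu_i}]$ as $T_1$ ranges over $\mathcal{S}_{++}^{p_1}$.

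Setting $\tilde{T}_1 = T_1 \midoplus 0_{(p-p_1) \times (p-p_1)}$ so that $\etr(-T_1 \mathfrak{X}_{11}) = \etr(-\tilde{T}_1 \mathfrak{X})$, I would then absorb this tilt into the Wishart density of $\mathfrak{X}$, yielding
\[
\EE\bigg[\etr(-T_1 \mathfrak{X}_{11}) \prod_{i=2}^d |\mathfrak{X}_{ii}|^{\nu_i}\bigg] = \EE[\etr(-T_1 \mathfrak{X}_{11})] \cdot \EE\bigg[\prod_{i=2}^d |\mathfrak{X}_{t, ii}|^{\nu_i}\bigg],
\]
where $\mathfrak{X}_t \sim \mathcal{W}_p(\alpha, \Sigma_t)$ with tilted scale $\Sigma_t = (\Sigma^{-1} + 2\tilde{T}_1)^{-1}$ and the prefactor is the Wishart Laplace transform from Remark~\ref{rem:Wishart.Lt}. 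Conjecture~\ref{conj:1.1} applied to $\mathfrak{X}_t$ (with exponent $0$ on block~$1$, which is admissible since the exponents are permitted to lie in $[0,\infty)$) yields $\EE[\prod_{i=2}^d |\mathfrak{X}_{t, ii}|^{\nu_i}] \geq \prod_{i=2}^d \EE[|\mathfrak{X}_{t, ii}|^{\nu_i}]$, and the Wishart determinant moment formula of Lemma~\ref{lem:determinant.power.moments} computes each marginal as $|(\Sigma_t)_{ii}|^{\nu_i}$ times a gamma-type constant that matches the analogous factor in $\EE[|\mathfrak{X}_{ii}|^{\nu_i}]$.

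The main obstacle, and key structural estimate, is the Loewner bound $(\Sigma_t)_{ii} \succeq \Sigma_{ii\cdot 1}$, uniform in $T_1 \succ 0$. The Sherman--Morrison--Woodbury identity applied to $\Sigma^{-1} + 2\tilde{T}_1$ yields
\[
(\Sigma_t)_{ii} = \Sigma_{ii} - \Sigma_{i1} \big(T_1^{-1}/2 + \Sigma_{11}\big)^{-1} \Sigma_{1i} \succeq \Sigma_{ii} - \Sigma_{i1}\Sigma_{11}^{-1}\Sigma_{1i} = \Sigma_{ii \cdot 1},
\]
since $(T_1^{-1}/2 + \Sigma_{11})^{-1} \preceq \Sigma_{11}^{-1}$ and conjugation by $\Sigma_{i1}$ preserves the Loewner order. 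Taking determinants and invoking the algebraic identity $|I_{p_i} - P_{1i}^\top P_{1i}| = |\Sigma_{ii\cdot 1}|/|\Sigma_{ii}|$ converts each marginal bound into $\EE[|\mathfrak{X}_{t, ii}|^{\nu_i}] \geq |I_{p_i} - P_{1i}^\top P_{1i}|^{\nu_i} \EE[|\mathfrak{X}_{ii}|^{\nu_i}]$. Chaining these pointwise inequalities and integrating back against $|T_1|^{\nu_1 - (p_1+1)/2}/\Gamma_{p_1}(\nu_1)$ over $\mathcal{S}_{++}^{p_1}$ restores $|\mathfrak{X}_{11}|^{-\nu_1}$ on the left and $\EE[|\mathfrak{X}_{11}|^{-\nu_1}]$ on the right, delivering the proposition.
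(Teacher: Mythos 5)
Your proof is correct, and the overall architecture coincides with the paper's: represent $|\mathfrak{X}_{11}|^{-\nu_1}$ by the Wishart-type Laplace transform, absorb the exponential tilt into the density to pass from $\mathcal{W}_p(\alpha,\Sigma)$ to $\mathcal{W}_p(\alpha,\Sigma_t)$ with $\Sigma_t = (\Sigma^{-1}+2\tilde T_1)^{-1}$, invoke Conjecture~\ref{conj:1.1} on the tilted matrix with exponent $0$ in the first block, and apply the moment formula of Lemma~\ref{lem:determinant.power.moments} to both sides. Where you depart from the paper is the final estimate on $|(\Sigma_t)_{ii}|/|\Sigma_{ii}|$. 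The paper identifies $\{(\Sigma^{-1}+2\overline T_{11})^{-1}\}_{ii}$ via the $2\times 2$ block-inverse formula of Lemma~\ref{lem:inverse.block.matrix}, then sends $T_{11}=cI_{p_1}$ with $c\to\infty$ to compute the infimum, and finishes with a chain of Schur-complement determinantal identities. You instead apply the Woodbury identity to write $(\Sigma_t)_{ii}=\Sigma_{ii}-\Sigma_{i1}(T_1^{-1}/2+\Sigma_{11})^{-1}\Sigma_{1i}$, use the operator-monotonicity $(T_1^{-1}/2+\Sigma_{11})^{-1}\preceq \Sigma_{11}^{-1}$ together with congruence-invariance of the Loewner order to get the uniform bound $(\Sigma_t)_{ii}\succeq \Sigma_{ii}-\Sigma_{i1}\Sigma_{11}^{-1}\Sigma_{1i}$, and then take determinants. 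This is arguably a cleaner route: it yields the bound pointwise in $T_1\succ 0$ without appealing to a limiting sequence or a multi-step Schur-complement computation, and the final determinantal identity $|I_{p_i}-P_{1i}^\top P_{1i}|=|\Sigma_{ii\cdot 1}|/|\Sigma_{ii}|$ follows directly from the congruence $I_{p_i}-P_{1i}^\top P_{1i}=\Sigma_{ii}^{-1/2}\Sigma_{ii\cdot 1}\Sigma_{ii}^{-1/2}$. What the paper's route buys is that the same block-inverse machinery (Lemma~\ref{lem:inverse.block.matrix}) is shared across the proofs of Propositions~\ref{prop:analogue.Theorem.1.1.Zhou.et.al} and~\ref{prop:analogue.Theorem.1.3.Zhou.et.al}, so the paper avoids introducing Woodbury as a separate tool.
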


\begin{remark}
Given that Conjecture~\ref{conj:1.1} was proved to hold for $d = 2$ in \cite{arXiv:2409.14512}, then Proposition~\ref{prop:analogue.Theorem.1.1.Zhou.et.al} is true unconditionally when $d=2$.
\end{remark}

The proposition below is a generalization of Theorem~1.3 of \citet{MR4666255} from the multivariate normal setting to the case of disjoint principal minors of Wishart random matrices. The proposition is unconditional and it is also part of the realm of so-called `opposite GPIs' mentioned above.

\begin{proposition}\label{prop:analogue.Theorem.1.3.Zhou.et.al}
For any integers $d, p_1, \ldots, p_d \in \N$, let $p = p_1 + \cdots + p_d$ and fix any $\alpha \in (p-1, \infty)$. Further let $\Sigma\in \mathcal{S}_{++}^p$ be any $p \times p$ positive definite matrix expressed in the form $\Sigma = (\Sigma_{ij})$, where for every integers $i, j \in \{ 1, \ldots, d \}$, the block $\Sigma_{ij}$ has size $p_i \times p_j$. Let $\mathfrak{X}\sim\mathcal{W}_p (\alpha,\Sigma)$ be a $p \times p$ Wishart random matrix partitioned conformally with $\Sigma$. Then
\[
\EE\left\{\left(\prod_{i=1}^{d-1} |\mathfrak{X}_{ii}|^{-\nu_i}\right) |\mathfrak{X}_{dd}|^{\nu_d}\right\} \leq \EE\left(\prod_{i=1}^{d-1} |\mathfrak{X}_{ii}|^{-\nu_i}\right) \EE\left(|\mathfrak{X}_{dd}|^{\nu_d}\right),
\]
provided that $(p_i - 1)/2 < \nu_i < \alpha/2 - (p_i - 1)/2$ for all $ i\in \{1, \ldots, d-1\}$ and $\nu_d\in (0,\infty)$. The restrictions are analogous to those of Corollary~\ref{cor:GPI.negative.powers.determinants.Wishart} and explained in Remark~\ref{rem:finite.exp}.
\end{proposition}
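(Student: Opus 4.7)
The plan is to reduce the proposition to a one-parameter family of Laplace-transform inequalities via the integral representation~\eqref{eq:3.1}, then verify each of them by an explicit Wishart integral computation whose only nontrivial step is the Loewner monotonicity of Schur complements.

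By~\eqref{eq:3.1}, for every $i \in \{1, \ldots, d-1\}$,
\[
|\mathfrak{X}_{ii}|^{-\nu_i} = \int_{\mathcal{S}_{++}^{p_i}} \etr(-T_{ii} \mathfrak{X}_{ii}) \, \mu_i(\rd T_{ii}), \qquad \mu_i(\rd T_{ii}) := \frac{|T_{ii}|^{\nu_i - (p_i+1)/2}}{\Gamma_{p_i}(\nu_i)} \, \rd T_{ii}.
\]
Tonelli--Fubini then reduces the claim to proving, for every $T$ of the form $T = T_{11} \midoplus \cdots \midoplus T_{d-1,d-1} \midoplus 0_{p_d}$ with $T_{ii} \in \mathcal{S}_{++}^{p_i}$, the pointwise inequality
\[
\EE\big[\etr(-T\mathfrak{X}) \, |\mathfrak{X}_{dd}|^{\nu_d}\big] \leq \EE\big[\etr(-T\mathfrak{X})\big] \cdot \EE|\mathfrak{X}_{dd}|^{\nu_d};
\]
integrating this against $\prod_{i<d} \mu_i(\rd T_{ii})$ then recovers the target bound.

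I would establish this pointwise inequality by computing its left-hand side in closed form. Set $\tilde S := T + \Sigma^{-1}/2$ and partition it as a $2\times 2$ block matrix with groups $\{1,\ldots,d-1\}$ and $\{d\}$. Change variables $(X_{1:d-1,1:d-1}, X_{1:d-1,d}, X_{dd}) \mapsto (U, C, Y)$ with $U := X_{1:d-1,1:d-1} - CY^{-1}C^\top$ (Jacobian $1$); the block-determinantal identity $|X| = |Y| \cdot |U|$ splits the factor $|X|^{\alpha/2 - (p+1)/2} |X_{dd}|^{\nu_d}$ cleanly across the three new variables. Successive integration over $C$ (matrix-Gaussian integral via completion of squares about $C_0 := -\tilde S_{1:d-1,1:d-1}^{-1} \tilde S_{1:d-1,d} Y$), then $U$ and $Y$ (multivariate gamma integrals, valid since $\alpha > p-1$ and $\alpha/2 + \nu_d > (p_d-1)/2$), combined with the identity $\pi^{(p-p_d)p_d/2}\Gamma_{p-p_d}((\alpha-p_d)/2)\Gamma_{p_d}(\alpha/2) = \Gamma_p(\alpha/2)$ and Remark~\ref{rem:Wishart.Lt}, yields
\[
\EE\big[\etr(-T\mathfrak{X}) \, |\mathfrak{X}_{dd}|^{\nu_d}\big] = \frac{\Gamma_{p_d}(\alpha/2 + \nu_d)}{\Gamma_{p_d}(\alpha/2)} \cdot |I_p + 2T\Sigma|^{-\alpha/2} \cdot |\tilde S_{dd\cdot 1:d-1}|^{-\nu_d},
\]
where $\tilde S_{dd\cdot 1:d-1}$ is the Schur complement of $\tilde S_{1:d-1,1:d-1}$ in $\tilde S$.

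Substituting Remark~\ref{rem:Wishart.Lt} and the special case $T = 0$ of the above formula (which recovers $\EE|\mathfrak{X}_{dd}|^{\nu_d} = \Gamma_{p_d}(\alpha/2+\nu_d)/\Gamma_{p_d}(\alpha/2) \cdot 2^{p_d\nu_d}|\Sigma_{dd}|^{\nu_d}$), the pointwise inequality reduces to $|\tilde S_{dd\cdot 1:d-1}|^{-\nu_d} \leq 2^{p_d\nu_d}|\Sigma_{dd}|^{\nu_d}$. Since $T \in \mathcal{S}_+^p$ gives $\tilde S \succeq \Sigma^{-1}/2$ in the Loewner order, and the bottom-right Schur complement is Loewner-monotone on $\mathcal{S}_{++}^p$ (via $(M^{-1})_{dd} = (M_{dd\cdot 1:d-1})^{-1}$ combined with the anti-monotonicity of inversion), one obtains $\tilde S_{dd\cdot 1:d-1} \succeq (\Sigma^{-1}/2)_{dd\cdot 1:d-1} = \Sigma_{dd}^{-1}/2$, where the last equality uses the standard identity $(\Sigma^{-1})_{dd\cdot 1:d-1} = \Sigma_{dd}^{-1}$. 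Taking determinants and raising to the power $\nu_d > 0$ completes the proof. The main obstacle is the closed-form computation above; all other ingredients are routine, and the whole nontrivial content of the proposition collapses into one Schur complement Loewner inequality.
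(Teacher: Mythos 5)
Your proposal is correct, and the overall skeleton matches the paper's: use the Laplace-transform representation~\eqref{eq:3.1} for each $|\mathfrak{X}_{ii}|^{-\nu_i}$ with $i < d$, apply Fubini, and reduce everything to a determinant bound on the $dd$-block of the inverse of $\Sigma^{-1} + 2\overline{T}$. Your closed form
$\EE\{\etr(-T\mathfrak{X})\,|\mathfrak{X}_{dd}|^{\nu_d}\} = \frac{\Gamma_{p_d}(\alpha/2+\nu_d)}{\Gamma_{p_d}(\alpha/2)}\,|I_p + 2T\Sigma|^{-\alpha/2}\,|\tilde S_{dd\cdot 1:d-1}|^{-\nu_d}$
is exactly what the paper gets after splitting $|I_p + 2\overline{T}\Sigma|^{-\alpha/2}$ off and invoking Lemma~\ref{lem:determinant.power.moments} with the tilted scale matrix (recall $(\Sigma^{-1}+2\overline{T})^{-1}_{dd} = (2\tilde S_{dd\cdot 1:d-1})^{-1}$, which absorbs the $2^{p_d\nu_d}$ factor). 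The genuine difference lies in how the two proofs establish the Loewner bound $\tilde S_{dd\cdot 1:d-1} \succeq \Sigma_{dd}^{-1}/2$: the paper performs an explicit block-inversion via Lemma~\ref{lem:inverse.block.matrix}, computes the nonnegative-definite increment $M$ in $\{(\Sigma^{-1}+2\overline{T})^{-1}\}_{dd}^{-1} = \Sigma_{dd}^{-1} + M$, and applies $|A_1+A_2|\geq|A_1|$, whereas you invoke the Loewner monotonicity of the bottom-right Schur complement --- a direct consequence of $(M^{-1})_{dd} = (M_{dd\cdot 1:d-1})^{-1}$ and operator anti-monotonicity of the inverse --- together with $(\Sigma^{-1})_{dd\cdot 1:d-1} = \Sigma_{dd}^{-1}$. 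Your route avoids computing $M$ explicitly and is cleaner; the paper's route has the side benefit of exhibiting the exact gap $M$, which could be useful for quantitative refinements, but for proving the inequality itself your argument is more economical. Both approaches ultimately prove the same operator inequality $\tilde S_{dd\cdot 1:d-1} \succeq \Sigma_{dd}^{-1}/2$ and finish identically by taking determinants and using $\nu_d > 0$.
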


The following result extends Theorem~2 in \cite{MR4538422} from the setting of traces of disjoint diagonal blocks of Wishart random matrices to the case of determinants of such blocks.

\begin{theorem}\label{thm:generalization.Theorem.2.GO.2023}
Let $p_1,p_2\in \N$ be given integers. Let $(\mathfrak{X}_{11}, \mathfrak{X}_{22})$ and $(\mathfrak{X}_{11}^{\star}, \mathfrak{X}_{22}^{\star})$ be two random pairs on the space $\mathcal{S}_{++}^{p_1} \times \mathcal{S}_{++}^{p_2}$ with identical marginal distributions. Suppose that the random matrices $\mathfrak{X}_{11}^{\star}$ and $\mathfrak{X}_{22}^{\star}$ are independent, and that $(\mathfrak{X}_{11}, \mathfrak{X}_{22}) \preceq_{\mathrm{Lt}} (\mathfrak{X}_{11}^{\star}, \mathfrak{X}_{22}^{\star})$. If, as per Definition~\ref{def:MB}, $f$ and $g$ are matrix-variate Bernstein functions with triplets $(A_1,0,\mu_1)$ and $(A_2,0,\mu_2)$, respectively, then
\[
\EE\{f(\mathfrak{X}_{11}) g(\mathfrak{X}_{22})\} \geq \EE\{f(\mathfrak{X}_{11}^{\star})\} \EE\{g(\mathfrak{X}_{22}^{\star})\},
\]
provided that the expectations are finite.
\end{theorem}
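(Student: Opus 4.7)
The plan is to combine the integral representation of matrix-variate Bernstein functions from Definition~\ref{def:MB} with the Laplace-transform-order hypothesis to reduce the target inequality to a pointwise comparison of joint Laplace transforms.

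Because the triplets $(A_1, 0, \mu_1)$ and $(A_2, 0, \mu_2)$ both have vanishing linear part, I would first write $f = \mathrm{tr}(A_1) + \tilde f$ and $g = \mathrm{tr}(A_2) + \tilde g$, where
\[
\tilde f(T_1) = \int_{\mathcal{S}_{++}^{p_1}} \{1 - \etr(-T_1 X_1)\} \, \mu_1(\rd X_1), \qquad \tilde g(T_2) = \int_{\mathcal{S}_{++}^{p_2}} \{1 - \etr(-T_2 X_2)\} \, \mu_2(\rd X_2).
\]
Expanding $f(\mathfrak{X}_{11}) g(\mathfrak{X}_{22})$ into four summands, the three terms involving the constants $\mathrm{tr}(A_i)$ each depend on at most one marginal, so the identical-marginal assumption forces them to contribute the same amount on the starred and unstarred sides; one gets
\[
\EE\{f(\mathfrak{X}_{11}) g(\mathfrak{X}_{22})\} - \EE\{f(\mathfrak{X}_{11}^{\star}) g(\mathfrak{X}_{22}^{\star})\} = \EE\{\tilde f(\mathfrak{X}_{11}) \tilde g(\mathfrak{X}_{22})\} - \EE\{\tilde f(\mathfrak{X}_{11}^{\star}) \tilde g(\mathfrak{X}_{22}^{\star})\}.
\]

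Next, I would use Tonelli's theorem (valid because $1 - \etr(-\cdot) \geq 0$ on positive definite arguments) to exchange $\EE$ with the double $\mu_1 \otimes \mu_2$ integrals; the resulting double integrals are finite thanks to $0 \leq \tilde f \tilde g \leq f g$ together with the hypothesis that $\EE\{f(\mathfrak{X}_{11}) g(\mathfrak{X}_{22})\}$ is finite. Expanding the integrand $\{1 - \etr(-X_1 \mathfrak{X}_{11})\}\{1 - \etr(-X_2 \mathfrak{X}_{22})\}$ into four pieces, the contributions of $1$, $\etr(-X_1 \mathfrak{X}_{11})$ and $\etr(-X_2 \mathfrak{X}_{22})$ each depend on a single marginal and thus cancel between starred and unstarred by identical marginals. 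After factorizing the starred bilinear expectation by independence of $\mathfrak{X}_{11}^{\star}$ and $\mathfrak{X}_{22}^{\star}$ and re-combining using identical marginals, the difference becomes
\[
\int_{\mathcal{S}_{++}^{p_2}} \int_{\mathcal{S}_{++}^{p_1}} \bigl[\EE\{\etr(-X_1 \mathfrak{X}_{11}) \etr(-X_2 \mathfrak{X}_{22})\} - \EE\{\etr(-X_1 \mathfrak{X}_{11}^{\star}) \etr(-X_2 \mathfrak{X}_{22}^{\star})\}\bigr] \, \mu_1(\rd X_1) \, \mu_2(\rd X_2).
\]

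Finally, the hypothesis $(\mathfrak{X}_{11}, \mathfrak{X}_{22}) \preceq_{\mathrm{Lt}} (\mathfrak{X}_{11}^{\star}, \mathfrak{X}_{22}^{\star})$ applied at each pair $(X_1, X_2) \in \mathcal{S}_{++}^{p_1} \times \mathcal{S}_{++}^{p_2} \subseteq \mathcal{S}_+^{p_1} \times \mathcal{S}_+^{p_2}$ shows the bracketed integrand is nonnegative, so the whole double integral is nonnegative; combined with $\EE\{f(\mathfrak{X}_{11}^{\star}) g(\mathfrak{X}_{22}^{\star})\} = \EE\{f(\mathfrak{X}_{11}^{\star})\} \EE\{g(\mathfrak{X}_{22}^{\star})\}$ from the independence of the starred pair, this yields the desired inequality. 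I expect the main technical subtlety to lie in tracking the Tonelli/Fubini justifications through the four-term expansion; however, the nonnegativity built into the Bernstein integrands and the finiteness assumption on the joint expectation make this step routine rather than substantive.
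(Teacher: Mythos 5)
Your argument is correct and is essentially the same as the paper's: decompose $f$ and $g$ via the Bernstein representation with vanishing linear part, push the expectation inside the $\mu_1\otimes\mu_2$ integral by Tonelli, cancel the single-marginal terms using the identical-marginals hypothesis, and conclude by applying the Laplace-transform order to the remaining bilinear $\etr$ term together with independence of the starred pair. The only difference is cosmetic — the paper packages the pointwise cancellations and Lt-order step up front as a separate inequality for $1-\etr$ factors (its display~\eqref{eq:4.2}) and then substitutes it into the four-term expansion, whereas you perform the same cancellations after the expansion — but the key ingredients and their order of logical dependence are identical.
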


\begin{remark}
In order to apply Theorem~\ref{thm:generalization.Theorem.2.GO.2023} with specific examples of matrix-variate Bernstein functions, recall Remark~\ref{rem:Bernstein.cumulant} regarding the relationship between matrix-variate Bernstein functions and cumulant transforms of infinitely divisible matrix distributions supported on $\mathcal{S}_{++}^p$ and refer to \citet{MR2001835,MR2354571} for appropriate examples of cumulant transforms in this context.
\end{remark}

The next result differs in nature from all previous ones, but it is relevant for the current setting as it shows that the eigenvalues of a Wishart random matrix satisfy a more general form of GPI.

\begin{theorem}\label{thm:eigenvalues.MTP2}
Fix any integers $n, p \in \N$, $\alpha \in (p - 1,\infty)$, and $p \times p$ positive definite matrix $\Sigma\in \mathcal{S}_{++}^p$. Let $\mathfrak{X} \sim\mathcal{W}_p (\alpha,\Sigma)$ be a $p \times p$ Wishart random matrix, and let $\bb{\Lambda} = (\Lambda_1, \ldots, \Lambda_p)$ denote its vector of eigenvalues. Then, for any nonnegative and component-wise non-decreasing functions $h_1, \ldots, h_n$ supported on $(0,\infty)^p$, and for every integer $k \in \{2, \ldots, n\}$, one has
\[
\EE \left\{ \prod_{i=1}^n h_i(\bb{\Lambda}) \right\} \geq \EE \left\{ \prod_{i=1}^{k - 1} h_i(\bb{\Lambda}) \right\} \EE \left\{ \prod_{i = k}^n h_i(\bb{\Lambda}) \right\},
\]
provided that the expectations are finite. Moreover, the inequality also holds if the functions $h_1, \ldots, h_n$ are all component-wise non-increasing.
\end{theorem}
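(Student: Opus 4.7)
The plan is to derive the theorem from the FKG correlation inequality applied to the joint law of $\bb{\Lambda}$, once that law is shown to be MTP$_2$ on a suitable sublattice. Order the eigenvalues as $\Lambda_1 \geq \cdots \geq \Lambda_p$ so that $\bb{\Lambda}$ takes values in the ordered simplex $D = \{\lambda \in (0,\infty)^p : \lambda_1 \geq \cdots \geq \lambda_p\}$. Because the component-wise maximum and minimum of any two points of $D$ remain in $D$, this set is a sublattice of $(0,\infty)^p$, and the MTP$_2$ condition of Definition~\ref{def:MTP2} extends naturally to densities on $D$. Once MTP$_2$ is verified on $D$, the FKG inequality yields positive association: for any two nonnegative component-wise non-decreasing functions $F, H : (0,\infty)^p \to [0,\infty)$---each remaining non-decreasing under restriction to $D$---one has $\EE\{F(\bb{\Lambda}) H(\bb{\Lambda})\} \geq \EE\{F(\bb{\Lambda})\} \EE\{H(\bb{\Lambda})\}$, and the same inequality holds when both functions are non-increasing. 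Choosing $F = \prod_{i=1}^{k-1} h_i$ and $H = \prod_{i=k}^{n} h_i$, each being nonnegative and monotone as a product of such, then settles both directions of the theorem at once.

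The core step is therefore to prove that $\bb{\Lambda}$ admits an MTP$_2$ density on $D$. Via the spectral change of variables $\mathfrak{X} = U\,\mathrm{diag}(\bb{\Lambda})\,U^{\top}$ with $U \in O(p)$ (see Theorem~3.2.17 of \citet{MR652932}), one writes the joint density of $\bb{\Lambda}$ on $D$ as
\[
f_{\Lambda}(\lambda) = c_{\alpha,\Sigma}\, \prod_{i<j}(\lambda_i - \lambda_j) \, \prod_{i=1}^{p} \lambda_i^{(\alpha - p - 1)/2} \, \int_{O(p)} \etr\!\bigl(-\tfrac{1}{2}\Sigma^{-1} U \,\mathrm{diag}(\lambda)\, U^{\top}\bigr)\, \rd U,
\]
and log-supermodularity reduces to the inequality $\partial^{2} \log f_{\Lambda} / \partial \lambda_i \partial \lambda_j \geq 0$ for all $i \neq j$ and $\lambda \in D$. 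A direct differentiation expresses this cross-partial as $1/(\lambda_i - \lambda_j)^2 + \tfrac{1}{4}\, \mathrm{Cov}_{\mu_\lambda}\{a_i(U), a_j(U)\}$, where $a_k(U) = (U^{\top} \Sigma^{-1} U)_{kk}$ and $\mu_\lambda$ is the tilted probability measure on $O(p)$ with density proportional to $\etr(-\tfrac{1}{2}\Sigma^{-1} U\,\mathrm{diag}(\lambda)\,U^{\top})$ relative to the normalized Haar measure.

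The main obstacle will be to establish the uniform lower bound $\mathrm{Cov}_{\mu_\lambda}\{a_i(U), a_j(U)\} \geq -4/(\lambda_i - \lambda_j)^2$ for all $i \neq j$ and $\lambda \in D$. This bound is delicate because the constraint $\sum_{k=1}^{p} a_k(U) \equiv \mathrm{tr}(\Sigma^{-1})$ forces $\sum_{j \neq i}\mathrm{Cov}_{\mu_\lambda}(a_i, a_j) = -\mathrm{Var}_{\mu_\lambda}(a_i) \leq 0$, so the off-diagonal covariances are generically negative and the Vandermonde singularity $1/(\lambda_i - \lambda_j)^2$ must absorb them. The isotropic case $\Sigma = \sigma^2 I_p$ is immediate, since each $a_k(U)$ is then constant in $U$ and all tilted covariances vanish; the general case will require a concentration-type analysis of $\mu_\lambda$ exploiting the fact that the tilting strength in the direction $a_i - a_j$ grows linearly with $|\lambda_i - \lambda_j|$, so that $\mathrm{Var}_{\mu_\lambda}(a_i - a_j)$ decays fast enough to accommodate the bound.
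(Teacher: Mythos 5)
Your overall strategy---establish that the law of $\bb{\Lambda}$ is $\mathrm{MTP}_2$ on the ordered chamber and then invoke the FKG/Karlin--Rinott positive-association inequality---is exactly the route taken in the paper. Your reduction is also correctly set up: the ordered simplex $D$ is indeed a sublattice of $(0,\infty)^p$, the log-density decomposes into the Vandermonde, power, and orbit-integral pieces as you write, and the cross-partial computation $\partial^{2}\log f_{\Lambda}/\partial\lambda_i\partial\lambda_j = (\lambda_i-\lambda_j)^{-2} + \tfrac14\,\mathrm{Cov}_{\mu_\lambda}\{a_i(U),a_j(U)\}$ is correct. The final paragraph, where you pass from $\mathrm{MTP}_2$ to the product inequality by taking $F=\prod_{i<k}h_i$, $H=\prod_{i\geq k}h_i$, is also valid in both the non-decreasing and non-increasing cases.

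The genuine gap is that you never prove the covariance lower bound $\mathrm{Cov}_{\mu_\lambda}\{a_i(U),a_j(U)\}\geq -4/(\lambda_i-\lambda_j)^2$, which is the entire content of the $\mathrm{MTP}_2$ claim for non-isotropic $\Sigma$. You acknowledge this yourself (``will require a concentration-type analysis''), but a sketch of a hoped-for mechanism is not a proof, and the bound is far from obvious: as you note, the constraint $\sum_k a_k(U)=\mathrm{tr}(\Sigma^{-1})$ forces the off-diagonal tilted covariances to be negative in aggregate, and the orbit integral $\int_{O(p)}\etr(-\tfrac12\Sigma^{-1}U\,\mathrm{diag}(\lambda)\,U^{\top})\,\rd U$ has no closed form in the real (orthogonal) case, so a direct differential analysis is technically demanding. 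The paper sidesteps this entirely by citing the known result that the Wishart eigenvalue density is $\mathrm{MTP}_2$ (Theorem~3 of \citet{MR3923912}, resting on \citet{MR458718}), and then appealing to Eq.~(1.6) of \citet{MR628759}. To complete your argument along its current lines you would either need to supply the missing covariance estimate in full, or replace that step by a citation to the known $\mathrm{MTP}_2$ result, at which point your proof collapses to the paper's.
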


In particular, for any $\nu\in [0,\infty)$, the power function $x \mapsto x^{\nu}$ is non-decreasing on $(0,\infty)$, so the corollary below is an immediate consequence of Theorem~\ref{thm:eigenvalues.MTP2} with $n = p$ and the choice of functions $h_i(\bb{\lambda}) = \lambda_i^{\nu_i}$ for each $i \in \{1,\ldots, p\}$.

\begin{corollary}\label{cor:eigenvalues.MTP2.powers}
Fix any integer $p \in \N$, $\alpha \in (p - 1,\infty)$, and $p \times p$ positive definite matrix $\Sigma\in \mathcal{S}_{++}^p$. Let $\mathfrak{X} \sim\mathcal{W}_p (\alpha,\Sigma)$ be a $p \times p$ Wishart random matrix, and let $\bb{\Lambda} = (\Lambda_1, \ldots, \Lambda_p)$ denote its vector of eigenvalues. Then, for every $\nu_1, \ldots, \nu_p\in [0,\infty)$ and every integer $k \in \{2, \ldots, p\}$, one has
\[
\EE \left( \prod_{i=1}^p \Lambda_i^{\nu_i} \right)
\geq \EE \left( \prod_{i=1}^{k-1} \Lambda_i^{\nu_i} \right) \EE \left( \prod_{i=k}^p \Lambda_i^{\nu_i} \right).
\]
\end{corollary}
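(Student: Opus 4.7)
The plan is to derive Corollary~\ref{cor:eigenvalues.MTP2.powers} as a direct application of Theorem~\ref{thm:eigenvalues.MTP2}. First, I would set $n = p$ and, for every $i \in \{1,\ldots,p\}$, define $h_i : (0,\infty)^p \to [0,\infty)$ by $h_i(\bb{\lambda}) = \lambda_i^{\nu_i}$. With this choice, the left- and right-hand sides of the inequality claimed in the corollary coincide term-for-term with those appearing in the conclusion of Theorem~\ref{thm:eigenvalues.MTP2}, so the task reduces to checking the hypotheses of that theorem for the $h_i$'s just introduced.

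Next, I would verify that each $h_i$ is nonnegative and component-wise non-decreasing on $(0,\infty)^p$. Nonnegativity is obvious. For the monotonicity, observe that $h_i(\bb{\lambda})$ depends only on the $i$-th coordinate $\lambda_i$ and is constant in every other coordinate; since $\nu_i \geq 0$, the map $\lambda_i \mapsto \lambda_i^{\nu_i}$ is non-decreasing on $(0,\infty)$. Hence $h_i$ is non-decreasing in every coordinate, as required by Theorem~\ref{thm:eigenvalues.MTP2}.

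Finally, I would confirm the finiteness of all expectations appearing in the inequality. Because $\mathfrak{X}$ is almost surely positive definite, each $\Lambda_j$ lies in $(0,\infty)$ and satisfies $\Lambda_j \leq \mathrm{tr}(\mathfrak{X})$, so
\[
\prod_{i=1}^p \Lambda_i^{\nu_i} \leq \{\mathrm{tr}(\mathfrak{X})\}^{\nu_1 + \cdots + \nu_p} .
\]
The right-hand side has finite expectation because every positive moment of $\mathrm{tr}(\mathfrak{X})$ is finite for a Wishart random matrix, and the two factors on the right-hand side of the corollary are dominated by the same upper bound, so their expectations are finite as well. Given these verifications, Theorem~\ref{thm:eigenvalues.MTP2} yields the conclusion at once. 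Since the corollary is merely a specialization, the substantive work sits entirely inside Theorem~\ref{thm:eigenvalues.MTP2}, and no real obstacle arises in this deduction; the only mild care needed is in the dominance argument used to secure finiteness of the expectations.
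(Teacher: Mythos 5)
Your proof is correct and follows essentially the same route as the paper: apply Theorem~\ref{thm:eigenvalues.MTP2} with $n=p$ and $h_i(\bb{\lambda})=\lambda_i^{\nu_i}$, noting each such function is nonnegative and component-wise non-decreasing. Your finiteness argument uses the simple bound $\Lambda_j\le\mathrm{tr}(\mathfrak{X})$ rather than the paper's weighted AM--GM inequality, but both reduce to the known finiteness of all positive trace moments of a Wishart matrix.
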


\begin{remark}
To see that the expectations in Corollary~\ref{cor:eigenvalues.MTP2.powers} are finite, let $\nu = \nu_1 + \dots + \nu_p$. Then, the weighted arithmetic mean--geometric mean (AM--GM) inequality yields
\[
\EE \left( \prod_{i=1}^p \Lambda_i^{\nu_i} \right)
\leq \EE \left\{ \left(\sum_{i=1}^p \frac{\nu_i}{\nu} \Lambda_i\right)^{\nu} \right\} \leq \EE \left\{ \left(\sum_{i=1}^p \Lambda_i\right)^{\nu} \right\}
= \EE \left[ \{\mathrm{tr}(\mathfrak{X})\}^{\nu} \right].
\]
The trace moments on the right-hand side are known to be finite for every $\nu\in [0,\infty)$; see, e.g., Corollary~7.2.8 of \cite{MR652932} together with the bottom of p.~245 in the same book.
\end{remark}

\section{Proofs}\label{sec:proofs}

\begin{proof}[\textbf{Proof of Proposition~\ref{prop:composition.MB.CM}}]
By the assumptions, the map $g: \mathcal{S}_{++}^p \to [0,\infty)$ is a matrix-variate Bernstein function with triplet $(0_{p\times p},B,\mu)$, where $\mu$ is a probability measure on $\mathcal{S}_{++}^p$. Therefore, if a real random matrix $\mathfrak{X}$ of size $p\times p$ has law $\mu$, one can write
\[
g(T) = \mathrm{tr}(B T) + 1 - \EE\{\etr(-T \mathfrak{X})\}.
\]
Moreover, the function $\phi : [0,\infty) \to [0,\infty)$ is assumed to be completely monotone on $(0,\infty)$, so Bernstein's theorem, recall~\eqref{eq:2.1}, states that there exists a measure $\nu$ on $[0,\infty)$ such that, for all $t \in (0,\infty)$, one has
\[
\phi(t) = \int_{[0,\infty)} e^{-t\lambda} \nu (\rd \lambda).
\]
Because it is further assumed that $\phi$ is continuous and $\phi(0) = 1$, one knows that $\nu$ is in fact a probability measure; see, e.g., Section~XIII.4 of \citet{MR270403}.

Hence, letting $\Lambda\sim \nu$ and $N \mid \Lambda \sim \mathcal{P}\hspace{0.2mm}(\Lambda)$ be independent from a mutually independent sequence $\mathfrak{X}_1,\mathfrak{X}_2, \ldots \sim \mu$, with $\mathcal{P}$ referring to the Poisson distribution, one can write
\[
\begin{aligned}
\psi(T) = \phi\{g(T)\}
&= \int_{[0,\infty)} \etr(-\lambda B T) \exp(-\lambda) \exp\left[\lambda \EE\{\etr(-T \mathfrak{X})\}\right] \nu(\rd \lambda) \\
&= \int_{[0,\infty)} \etr(-\lambda B T) \sum_{n=0}^{\infty} \big[\EE\{\etr(-T \mathfrak{X})\}\big]^n \frac{\lambda^n}{n!} \exp(-\lambda) \, \nu(\rd \lambda) \\
&= \int_{[0,\infty)} \etr(-\lambda B T) \, \EE\left\{\etr\left(-T \sum_{i=1}^N \mathfrak{X}_i\right) \mid \Lambda = \lambda\right\} \nu(\rd \lambda) \\
&= \int_{[0,\infty)} \EE\left\{\etr\left(-T B \Lambda - T \sum_{i=1}^N \mathfrak{X}_i\right) \mid \Lambda = \lambda\right\} \nu(\rd \lambda) \\
&= \EE\left[\etr\left\{-T \left(B \Lambda + \sum_{i=1}^N \mathfrak{X}_i\right)\right\}\right],
\end{aligned}
\]
which is the Laplace transform of the positive definite random matrix $B \Lambda + \mathfrak{X}_1 + \cdots + \mathfrak{X}_N$. Therefore, $\psi = \phi \circ g : \mathcal{S}_{++}^p \to [0,\infty)$ is a matrix-variate completely monotone function, as per Definition~\ref{def:MCM}.
\end{proof}

\begin{proof}[\textbf{Proof of Proposition~\ref{prop:7.D.6.Shaked}}]
Let $\phi_1, \ldots, \phi_n$ be a collection of matrix-variate completely monotone functions such that $\phi_i: \mathcal{S}_{++}^{q_i}\to [0,\infty)$ for each integer $i \in \{1, \ldots, n\}$. By Definition~\ref{def:MCM}, for each $i\in \{1, \ldots, n\}$, there exists a measure $\mu_i$ on $\mathcal{S}_+^{q_i}$ such that $\phi_i = \mathcal{L}(\mu_i)$. Therefore, Fubini's theorem and the assumption that $(\mathfrak{X}_1, \ldots, \mathfrak{X}_n) \preceq_{\mathrm{Lt}} (\mathfrak{Y}_1, \ldots, \mathfrak{Y}_n)$ together imply
\[
\begin{aligned}
\EE\left\{\prod_{i=1}^n \phi_i(\mathfrak{X}_i)\right\}
&= \EE\left\{\int_{\mathcal{S}_+^{q_1}} \dots\int_{\mathcal{S}_+^{q_n}} \prod_{i=1}^n \etr(- \mathfrak{X}_i S_i) \, \mu_1 (\rd S_n) \cdots \mu_1 (\rd S_1) \right\} \\
&= \int_{\mathcal{S}_+^{q_1}} \dots\int_{\mathcal{S}_+^{q_n}} \EE\left\{\prod_{i=1}^n \etr(- \mathfrak{X}_i S_i)\right\} \mu_n (\rd S_n) \cdots \mu_1 (\rd S_1) \\
&\geq \int_{\mathcal{S}_+^{q_1}} \dots\int_{\mathcal{S}_+^{q_n}} \EE\left\{\prod_{i=1}^n \etr(- \mathfrak{Y}_i S_i)\right\} \mu_n (\rd S_n) \cdots \mu_1 (\rd S_1) \\
&= \EE\left\{\int_{\mathcal{S}_+^{q_1}} \dots\int_{\mathcal{S}_+^{q_n}} \prod_{i=1}^n \etr(- \mathfrak{Y}_i S_i) \, \mu_n (\rd S_n) \cdots \mu_1 (\rd S_1)\right\} \\
&= \EE\left\{\prod_{i=1}^n \phi_i(\mathfrak{Y}_i)\right\}.
\end{aligned}
\]
This proves the `only if' part of the statement.

The `if' part is even simpler. For each integer $i \in \{1, \ldots, n\}$ and any given $T_i\in \mathcal{S}_+^{q_i}$, simply take $\phi_i: \mathcal{S}_{++}^{q_i}\to [0,\infty)$ to be the matrix-variate completely monotone function $X_i \mapsto \etr(-T_i X_i)$ in
\[
\EE\left\{\prod_{i=1}^n \phi_i(\mathfrak{X}_i)\right\} \geq \EE\left\{\prod_{i=1}^n \phi_i(\mathfrak{Y}_i)\right\}.
\]
To verify that $X_i \mapsto \etr(-T_i X_i)$ is indeed matrix-variate completely monotone, apply Definition~2.1 of \citet{MR3286037}.
The above shows that $(\mathfrak{X}_1, \ldots, \mathfrak{X}_n) \preceq_{\mathrm{Lt}} (\mathfrak{Y}_1, \ldots, \mathfrak{Y}_n)$ as per Definition~\ref{def:MLtO} and concludes the proof.
\end{proof}

\newpage
\begin{proof}[\textbf{Proof of Proposition~\ref{prop:Wishart.diag.block.Lt.order}}]
Let $T\in \mathcal{S}_+^p$ be a $p \times p$ nonnegative definite matrix, and assume that $T$ is partitioned conformally with $\Sigma$. By using the expression for the Laplace transform of the Wishart distribution in Remark~\ref{rem:Wishart.Lt}, one has
\[
\EE\left\{\etr\left(-T \bigoplus_{i=1}^d \mathfrak{X}_{ii}\right)\right\}
= \EE\left[\etr\left\{-\left(\bigoplus_{i=1}^d T_{ii}\right) \mathfrak{X}\right\}\right]
= \left|I_p + 2 \left(\bigoplus_{i=1}^d T_{ii}\right) \Sigma\right|^{-\alpha/2}.
\]
Likewise, for any given integer $k\in \{2,\ldots,d\}$, one has
\[
\begin{aligned}
\EE\left\{\etr\left(-T \bigoplus_{i=1}^d \mathfrak{X}_{ii}^{\star}\right)\right\}
&= \left|I_p + 2 \left(\bigoplus_{i=1}^d T_{ii}\right) \left(\Sigma_{1:k-1,1:k-1} \midoplus \Sigma_{k:d,k:d}\right)\right|^{-\alpha/2} \\
&= \left|I_{\sum_{i=1}^{k-1} p_i} + 2 \left(\bigoplus_{i=1}^{k-1} T_{ii}\right) \Sigma_{1:k-1,1:k-1}\right|^{-\alpha/2} \\
&\qquad\times \left|I_{\sum_{i=k}^d p_i} + 2 \left(\bigoplus_{i=k}^d T_{ii}\right) \Sigma_{k:d,k:d}\right|^{-\alpha/2}.
\end{aligned}
\]
By comparing the right-hand sides of the last two equations, an application of Fischer's inequality (see, e.g., Theorem~7.8.5 of \citet{MR2978290}) yields
\[
\EE\left\{\etr\left(-T \bigoplus_{i=1}^d \mathfrak{X}_{ii}\right)\right\} \geq \EE\left\{\etr\left(-T \bigoplus_{i=1}^d \mathfrak{X}_{ii}^{\star}\right)\right\},
\]
and thus $\scalebox{1.2}{$\oplus$}_{i=1}^d \mathfrak{X}_{ii} \hspace{-0.5mm}\preceq_{\mathrm{Lt}} \hspace{-0.5mm}\scalebox{1.2}{$\oplus$}_{i=1}^d \mathfrak{X}_{ii}^{\star}$ as per Definition~\ref{def:MLtO}. By the linearity of the trace operator, the above can also be rewritten as
\[
\EE\left\{\prod_{i=1}^d \etr(- T_{ii} \mathfrak{X}_{ii})\right\} \geq \EE\left\{\prod_{i=1}^d \etr(- T_{ii} \mathfrak{X}^{\star}_{ii})\right\},
\]
which proves that $(\mathfrak{X}_{11}, \ldots, \mathfrak{X}_{dd}) \preceq_{\mathrm{Lt}} (\mathfrak{X}^{\star}_{11}, \ldots, \mathfrak{X}^{\star}_{dd})$ as per Definition~\ref{def:MLtO}.
\end{proof}

\begin{proof}[\textbf{Proof of Theorem~\ref{thm:generalization.Theorem.3.2.Wei}}]
The lower bound follows from Propositions~\ref{prop:7.D.6.Shaked}~and~\ref{prop:Wishart.diag.block.Lt.order} together with the fact that $(\mathfrak{X}_{11},\ldots,\mathfrak{X}_{k-1,k-1})$ and $(\mathfrak{X}_{11}^{\star},\ldots,\mathfrak{X}_{k-1,k-1}^{\star})$ have the same distribution, respectively, $(\mathfrak{X}_{kk},\ldots,\mathfrak{X}_{dd})$ and $(\mathfrak{X}_{kk}^{\star},\ldots,\mathfrak{X}_{dd}^{\star})$ have the same distribution, both in view of Corollary~3.2.6 of \citet{MR652932}.

For the upper bound, Definition~\ref{def:MCM} states that, for every integer $i \in \{1, \ldots, d\}$, there exists a measure $\mu_i$ on $\mathcal{S}_+^{p_i}$ such that, for every positive definite matrix $T \in \mathcal{S}_{++}^{p_i}$,
\[
\phi_i(T) = \mathcal{L}(\mu_i)(T) = \int_{\mathcal{S}_+^k} \etr(-T X) \, \mu_i(\rd X).
\]
Hence, by applying Fubini's theorem and the expression for the Laplace transform of the Wishart distribution in Remark~\ref{rem:Wishart.Lt}, one obtains
\[
\begin{aligned}
\EE\left\{\prod_{i=1}^d \phi_i(\mathfrak{X}_{ii})\right\}
&= \int_{\mathcal{S}_+^{p_1} \times \, \cdots \, \times \mathcal{S}_+^{p_d}} \EE\left\{\prod_{i=1}^d \etr\left(- T_{ii} \mathfrak{X}_{ii}\right)\right\} \prod_{i=1}^d \mu_i(\rd T_{ii}) \\
&= \int_{\mathcal{S}_+^{p_1} \times \, \cdots \, \times \mathcal{S}_+^{p_d}} \EE\left[\etr\left\{-\left(\bigoplus_{i=1}^d T_{ii}\right) \mathfrak{X}\right\}\right] \prod_{i=1}^d \mu_i(\rd T_{ii}) \\
&= \int_{\mathcal{S}_+^{p_1} \times \, \cdots \, \times \mathcal{S}_+^{p_d}} \left|I_p + 2 \left(\bigoplus_{i=1}^d T_{ii}\right) \Sigma\right|^{-\alpha/2} \prod_{i=1}^d \mu_i(\rd T_{ii}).
\end{aligned}
\]

In order to find an upper bound on the integrand, the reasoning for the upper bound in the proof of Theorem~3.2 of \citet{MR3278931} is adapted to the present setting. Let $V = \sqrt{2} \, L^{\top} M$ be a real-valued matrix of size $p \times p$, where the block lower triangular matrices $L$ and $M$ are defined through the block Cholesky decompositions
\[
\bigoplus_{i=1}^d T_{ii} = L L^{\top} \equiv \bigoplus_{i=1}^d T_{ii}^{1/2} T_{ii}^{1/2}, \qquad \Sigma = M M^{\top},
\]
and the diagonal blocks $L_{ii}$ and $M_{ii}$ are chosen to be positive definite for all $i\in \{1,\ldots,d\}$. One can then write
\[
\left|I_p + 2 \left(\bigoplus_{i=1}^d T_{ii}\right) \Sigma\right|^{-\alpha/2} = \big|I_p + V^{\top} V\big|^{-\alpha/2}.
\]
Now the matrix $(I_p + V^{\top} V) - (I_p + V)^{\top} (I_p + V) / 2 = (I_p - V)^{\top} (I_p - V) / 2$ is nonnegative definite, and the identity $|A_1 + A_2| \geq |A_2|$ holds for any nonnegative definite matrices $A_1,A_2\in \mathcal{S}_+^p$. Thus,
\[
\begin{aligned}
\big|I_p + V^{\top} V\big|^{-\alpha/2}
&= \big|(I_p + V^{\top} V) - (I_p + V)^{\top} (I_p + V) / 2 + (I_p + V)^{\top} (I_p + V) / 2\big|^{-\alpha/2} \\
&\leq \big|(I_p + V)^{\top} (I_p + V) / 2\big|^{-\alpha/2} \\[0.3mm]
&= 2^{p\alpha/2} |I_p + V|^{-\alpha}.
\end{aligned}
\]
Given that $L$ is block diagonal and $M$ is block lower triangular, the matrix $V$ is block lower triangular. Hence, the last quantity can be rewritten as
\[
\begin{aligned}
2^{p\alpha/2} |I_p + V|^{-\alpha}
&= \prod_{i=1}^d 2^{p_i \alpha/2} |I_{p_i} + V_{ii}|^{-\alpha} \\
&= \prod_{i=1}^d 2^{p_i \alpha/2} \big|I_{p_i} + \sqrt{2} \, T_{ii}^{1/2} M_{ii}\big|^{-\alpha}.
\end{aligned}
\]
By connecting the previous equations together, one reaches the desired conclusion.
\end{proof}

\begin{proof}[\textbf{Proof of Proposition~\ref{prop:analogue.Theorem.1.1.Zhou.et.al}}]
Fix numbers $\nu_1,\ldots,\nu_p$ where
\[
(p_1 - 1)/2 < \nu_1 < \alpha/2 - (p_1 - 1)/2
\]
and $\nu_i \in (0,\infty)$ for all $i \in \{2, \ldots, d\}$. Also, for any given positive definite matrix $T_{11}\in \mathcal{S}_{++}^{p_1}$, define $\overline{T}_{11} = T_{11} \midoplus 0_{(p-p_1)\times (p-p_1)}$ for convenience. Using the Laplace transforms in~\eqref{eq:3.1} for inverse determinant powers, one finds
\[
\EE\left(|\mathfrak{X}_{11}|^{-\nu_1} \prod_{i=2}^d |\mathfrak{X}_{ii}|^{\nu_i}\right)
= \EE\left\{\int_{\mathcal{S}_+^{p_1}} \etr\left(-T_{11} \mathfrak{X}_{11}\right) \, \mu_1(\rd T_{11}) \prod_{i=2}^d |\mathfrak{X}_{ii}|^{\nu_i}\right\}.
\]
Now using Fubini's theorem and the expression for the density function of the Wishart distribution in Definition~\ref{def:Wishart}, the right-hand side can be rewritten as
\begin{multline*}
 \int_{\mathcal{S}_+^{p_1}}\int_{\mathcal{S}_{++}^p} \left(\prod_{i=2}^d |X_{ii}|^{\nu_i}\right) \, \etr\left(-\overline{T}_{11} X\right) f_{\alpha,\Sigma}(X) \, \rd X \, \mu_1(\rd T_{11}) \\
= \int_{\mathcal{S}_+^{p_1}} |I_p + 2 \overline{T}_{11} \Sigma|^{-\alpha/2}\int_{\mathcal{S}_{++}^p} \left(\prod_{i=2}^d |X_{ii}|^{\nu_i}\right) f_{\alpha,(\Sigma^{-1} + 2 \overline{T}_{11})^{-1}}(X) \, \rd X \, \mu_1(\rd T_{11}).
\end{multline*}

Assuming that Conjecture~\ref{conj:1.1} holds true, and applying Lemma~\ref{lem:determinant.power.moments} of~\ref{app:A}, which contains moment formulas for standalone principal minors of Wishart random matrices, one sees that the innermost integral above satisfies
\[
\begin{aligned}
&\int_{\mathcal{S}_{++}^p} \left(\prod_{i=2}^d |X_{ii}|^{\nu_i}\right) f_{\alpha,(\Sigma^{-1} + 2 \overline{T}_{11})^{-1}}(X) \, \rd X \\[-2mm]
&\hspace{30mm}\geq \prod_{i=2}^d \int_{\mathcal{S}_{++}^p} |X_{ii}|^{\nu_i} f_{\alpha,(\Sigma^{-1} + 2 \overline{T}_{11})^{-1}}(X) \, \rd X \\
&\hspace{30mm}= \prod_{i=2}^d 2^{p_i \nu_i} |\{(\Sigma^{-1} + 2 \overline{T}_{11})^{-1}\}_{ii}|^{\nu_i} \frac{\Gamma_{p_i}(\alpha/2 + \nu_i)}{\Gamma_{p_i}(\alpha/2)} \\
&\hspace{30mm}= \prod_{i=2}^d \left[\frac{|\{(\Sigma^{-1} + 2 \overline{T}_{11})^{-1}\}_{ii}|^{\nu_i}}{|\Sigma_{ii}|^{\nu_i}} \times 2^{p_i \nu_i} |\Sigma_{ii}|^{\nu_i} \frac{\Gamma_{p_i}(\alpha/2 + \nu_i)}{\Gamma_{p_i}(\alpha/2)}\right] \\
&\hspace{30mm}\geq \prod_{i=2}^d \left[\inf_{\substack{T_{11}\in \mathcal{S}^{p_1} \\ \Sigma^{-1} + 2 \overline{T}_{11}\in \mathcal{S}_{++}^p}} \frac{|\{(\Sigma^{-1} + 2 \overline{T}_{11})^{-1}\}_{ii}|}{|\Sigma_{ii}|}\right]^{\nu_i} \EE(|\mathfrak{X}_{ii}|^{\nu_i}).
\end{aligned}
\]

By the expression for the Laplace transform of the Wishart distribution in Remark~\ref{rem:Wishart.Lt}, Fubini's theorem, and the Laplace transforms in~\eqref{eq:3.1} for inverse determinant powers, the remaining part of the integral satisfies
\[
\begin{aligned}
\int_{\mathcal{S}_+^{p_1}} |I_p + 2 \overline{T}_{11} \Sigma|^{-\alpha/2} \mu_1(\rd T_{11})
&= \int_{\mathcal{S}_+^{p_1}} \EE\left\{\etr\left(-\overline{T}_{11} \mathfrak{X}\right)\right\} \mu_1(\rd T_{11}) \\
&= \int_{\mathcal{S}_+^{p_1}} \EE\left\{\etr\left(-T_{11} \mathfrak{X}_{11}\right)\right\} \mu_1(\rd T_{11}) \\
&= \EE\left\{\int_{\mathcal{S}_+^{p_1}} \etr\left(-T_{11} \mathfrak{X}_{11}\right) \, \mu_1(\rd T_{11})\right\}
= \EE(|\mathfrak{X}_{11}|^{-\nu_1}).
\end{aligned}
\]
Therefore, one can write
\[
\frac{\EE(|\mathfrak{X}_{11}|^{-\nu_1} \prod_{i=2}^d |\mathfrak{X}_{ii}|^{\nu_i})}{\EE(|\mathfrak{X}_{11}|^{-\nu_1}) \prod_{i=2}^d \EE(|\mathfrak{X}_{ii}|^{\nu_i})} \geq \prod_{i=2}^d \left[\inf_{\substack{T_{11}\in \mathcal{S}^{p_1} \\ \Sigma^{-1} + 2 \overline{T}_{11}\in \mathcal{S}_{++}^p}} \frac{|\{(\Sigma^{-1} + 2 \overline{T}_{11})^{-1}\}_{ii}|}{|\Sigma_{ii}|}\right]^{\nu_i}.
\]

To conclude, it remains to compute the infimum explicitly for each $i\in \{2,\ldots,d\}$. For simplicity of notation, denote the block in position $(i,j)$ inside $\Sigma^{-1}$ by $\Sigma^{ij}$ instead of $(\Sigma^{-1})_{ij}$. From here onwards, fix $i = 2$; then the computation of the infimum follows similarly for the other indices $i\in \{3,\ldots,d\}$. By Lemma~\ref{lem:inverse.block.matrix}~(i) of \ref{app:A}, one has
\[
|\{(\Sigma^{-1} + 2 \overline{T}_{11})^{-1}\}_{22}| = \left|\Sigma^{22} - \begin{bmatrix} \Sigma^{21} & \Sigma^{2,3:d}\end{bmatrix} \begin{bmatrix} \Sigma^{11} + 2 T_{11} & \Sigma^{1,3:d} \\ \Sigma^{3:d,1} & \Sigma^{3:d,3:d}\end{bmatrix}^{-1} \begin{bmatrix} \Sigma^{12} \\ \Sigma^{3:d,2}\end{bmatrix}\right|^{-1}.
\]
The infimum is attained when say $T_{11} = c I_{p_1}$ and $c \to \infty$. By Lemma~\ref{lem:inverse.block.matrix}~(ii) of \ref{app:A} and determinantal properties of Schur complements, one deduces that
\[
\begin{aligned}
\inf_{\substack{T_{11}\in \mathcal{S}^{p_1} \\ \Sigma^{-1} + 2 \overline{T}_{11}\in \mathcal{S}_{++}^p}} |\{(\Sigma^{-1} + 2 \overline{T}_{11})^{-1}\}_{22}|
&= \left|\Sigma^{22} - \begin{bmatrix} \Sigma^{21} & \Sigma^{2,3:d}\end{bmatrix} \begin{bmatrix} 0 & 0 \\ 0 & (\Sigma^{3:d,3:d})^{-1}\end{bmatrix} \begin{bmatrix} \Sigma^{12} \\ \Sigma^{3:d,2}\end{bmatrix}\right|^{-1} \\[-2.5mm]
&= \left|\Sigma^{22} - \Sigma^{2,3:d} (\Sigma^{3:d,3:d})^{-1} \Sigma^{3:d,2}\right|^{-1} \\[1mm]
&= \frac{|\Sigma^{3:d,3:d}|}{|\Sigma^{2:d,2:d}|} = \frac{|\Sigma^{3:d,3:d}| / |\Sigma^{-1}|}{|\Sigma^{2:d,2:d}| / |\Sigma^{-1}|} = \frac{|\Sigma_{1:2,1:2}|}{|\Sigma_{11}|},
\end{aligned}
\]
and thus, for $P_{12} = \Sigma_{11}^{-1/2} \Sigma_{12} \Sigma_{22}^{-1/2}$,
\[
\begin{aligned}
\inf_{\substack{T_{11}\in \mathcal{S}^{p_1} \\ \Sigma^{-1} + 2 \overline{T}_{11}\in \mathcal{S}_{++}^p}} \frac{|\{(\Sigma^{-1} + 2 \overline{T}_{11})^{-1}\}_{22}|}{|\Sigma_{22}|}
&= \frac{|\Sigma_{1:2,1:2}|}{|\Sigma_{11}| |\Sigma_{22}|} = \frac{|\Sigma_{11}| |\Sigma_{22} - \Sigma_{21} \Sigma_{11}^{-1} \Sigma_{12}|}{|\Sigma_{11}| |\Sigma_{22}|} \\[-2mm]
&= \big|I_{p_2} - \Sigma_{22}^{-1/2} \Sigma_{21} \Sigma_{11}^{-1} \Sigma_{12} \Sigma_{22}^{-1/2}\big| \\[1mm]
&= \big|I_{p_2} - P_{12}^{\top} P_{12}\big|.
\end{aligned}
\]
The conclusion follows.
\end{proof}

\begin{proof}[\textbf{Proof of Proposition~\ref{prop:analogue.Theorem.1.3.Zhou.et.al}}]
Fix numbers $\nu_1,\ldots,\nu_d$ such that
\[
(p_i - 1)/2 < \nu_i < \alpha/2 - (p_i - 1)/2
\]
for all $i \in \{1, \ldots, d-1\}$, and $\nu_d\in (0,\infty)$. For simplicity of notation, define
\[
\overline{T} = \left(\bigoplus_{i=1}^{d-1} T_{ii}\right) \midoplus 0_{p_d\times p_d}.
\]
Using the Laplace transforms in~\eqref{eq:3.1} for inverse determinant powers, together with Fubini's theorem, one has
\[
\begin{aligned}
\EE\left\{\left(\prod_{i=1}^{d-1} |\mathfrak{X}_{ii}|^{-\nu_i}\right) |\mathfrak{X}_{dd}|^{\nu_d}\right\}
&= \EE\left[\left\{\int_{\bigtimes_{k=1}^{d-1} \mathcal{S}_+^{p_k}} \prod_{i=1}^{d-1} \etr\left(- T_{ii} \mathfrak{X}_{ii}\right) \mu_i(\rd T_{ii})\right\} \, |X_{dd}|^{\nu_d}\right] \\
&= \int_{\bigtimes_{k=1}^{d-1} \mathcal{S}_+^{p_k}}\int_{\mathcal{S}_{++}^p} \etr(-\overline{T} X) |X_{dd}|^{\nu_d} f_{\alpha,\Sigma}(X) \, \rd X \prod_{i=1}^{d-1} \mu_i(\rd T_{ii}).
\end{aligned}
\]
Now using the expression for the density function of the Wishart distribution in Definition~\ref{def:Wishart}, the right-hand side can be rewritten as
\[
\int_{\bigtimes_{k=1}^{d-1} \mathcal{S}_+^{p_k}} |I_p + 2 \overline{T} \Sigma|^{-\alpha/2} \int_{\mathcal{S}_{++}^p} |X_{dd}|^{\nu_d} f_{\alpha,(\Sigma^{-1} + 2 \overline{T})^{-1}}(X) \, \rd X \prod_{i=1}^{d-1} \mu_i(\rd T_{ii}).
\]
By applying Lemma~\ref{lem:determinant.power.moments} of~\ref{app:A}, which contains moment formulas for standalone principal minors of Wishart random matrices, the innermost integral above satisfies
\begin{equation}\label{eq:4.1}
\int_{\mathcal{S}_{++}^p} |X_{dd}|^{\nu_d} f_{\alpha,(\Sigma^{-1} + 2 \overline{T})^{-1}}(X) \, \rd X
= 2^{p_d \nu_d} \left|\{(\Sigma^{-1} + 2 \overline{T})^{-1}\}_{dd}\right|^{\nu_d} \frac{\Gamma_{p_d}(\alpha/2 + \nu_d)}{\Gamma_{p_d}(\alpha/2)}.
\end{equation}

To bound the last determinant in \eqref{eq:4.1}, use the notation
\[
\Sigma_{\star} = \Sigma_{1:d-1,1:d-1} - \Sigma_{1:d-1,d} \Sigma_{dd}^{-1} \Sigma_{d,1:d-1}
\]
for the Schur complement of $\Sigma_{dd}$ in $\Sigma$. By the formula in Lemma~\ref{lem:inverse.block.matrix}~(ii) of~\ref{app:A} for the inverse of a $2\times 2$ block matrix, one has
\[
\Sigma^{-1} + 2 \overline{T}
=
\begin{bmatrix}
\Sigma_{\star}^{-1} + 2 \bigoplus_{i=1}^{d-1} T_{ii} & - \Sigma_{\star}^{-1} \Sigma_{1:d-1,d} \Sigma_{dd}^{-1} \\[1mm]
-\Sigma_{dd}^{-1} \Sigma_{d,1:d-1} \Sigma_{\star}^{-1} & \Sigma_{dd}^{-1} + \Sigma_{dd}^{-1} \Sigma_{d,1:d-1} \Sigma_{\star}^{-1} \Sigma_{1:d-1,d} \Sigma_{dd}^{-1}
\end{bmatrix}
\equiv
\begin{bmatrix}
A & B \\
C & D
\end{bmatrix}.
\]
Inverting this $2\times 2$ block matrix using the formula in Lemma~\ref{lem:inverse.block.matrix}~(i) of~\ref{app:A}, one finds
\[
\{(\Sigma^{-1} + 2 \overline{T})^{-1}\}_{dd} = (\Sigma_{dd}^{-1} + M)^{-1},
\]
where $M$ is a specific nonnegative definite matrix, namely
\[
\begin{aligned}
M
&= (D - \Sigma_{dd}^{-1}) - C A^{-1} B \\[1mm]
&= \Sigma_{dd}^{-1} \Sigma_{d,1:d-1} \Sigma_{\star}^{-1} \Sigma_{1:d-1,d} \Sigma_{dd}^{-1} \\[-1mm]
&\qquad- \Sigma_{dd}^{-1} \Sigma_{d,1:d-1} \Sigma_{\star}^{-1} \left( \Sigma_{\star}^{-1} + 2 \bigoplus_{i=1}^{d-1} T_{ii} \right)^{-1} \Sigma_{\star}^{-1} \Sigma_{1:d-1,d} \Sigma_{dd}^{-1} \\
&= \Sigma_{dd}^{-1} \Sigma_{d,1:d-1} \left\{ I_p - \Sigma_{\star}^{-1} \left( \Sigma_{\star}^{-1} + 2 \bigoplus_{i=1}^{d-1} T_{ii} \right)^{-1} \right\} \Sigma_{\star}^{-1} \Sigma_{1:d-1,d} \Sigma_{dd}^{-1} \\
&= \Sigma_{dd}^{-1} \Sigma_{d,1:d-1} \left( 2 \bigoplus_{i=1}^{d-1} T_{ii} \right) \left( \Sigma_{\star}^{-1} + 2 \bigoplus_{i=1}^{d-1} T_{ii} \right)^{-1} \Sigma_{\star}^{-1} \Sigma_{1:d-1,d} \Sigma_{dd}^{-1}.
\end{aligned}
\]

Putting the last two equations together, an application of the identity $|A_1 + A_2| \geq |A_1|$, which is valid for any $p_d \times p_d$ nonnegative definite matrices $A_1,A_2\in \mathcal{S}_+^{p_d}$, one gets
\[
\left|\{(\Sigma^{-1} + 2 \overline{T})^{-1}\}_{dd}\right|^{\nu_d} = |\Sigma_{dd}^{-1} + M|^{-\nu_d} \leq |\Sigma_{dd}^{-1}|^{-\nu_d} = |\Sigma_{dd}|^{\nu_d}.
\]
Applying the latter bound into~\eqref{eq:4.1} in conjunction with Lemma~\ref{lem:determinant.power.moments} of~\ref{app:A}, which contains moment formulas for standalone principal minors of Wishart random matrices, one obtains
\[
\int_{\mathcal{S}_{++}^p} |X_{dd}|^{\nu_d} f_{\alpha,(\Sigma^{-1} + 2 \overline{T})^{-1}}(X) \, \rd X \leq 2^{p_d \nu_d} |\Sigma_{dd}|^{\nu_d} \frac{\Gamma_{p_d}(\alpha/2 + \nu_d)}{\Gamma_{p_d}(\alpha/2)} = \EE(|\mathfrak{X}_{dd}|^{\nu_d}).
\]
Hence, the expectation at the beginning of the proof now satisfies
\[
\frac{\EE\{(\, \prod_{i=1}^{d-1} |\mathfrak{X}_{ii}|^{-\nu_i}) |\mathfrak{X}_{dd}|\}}{\EE(|\mathfrak{X}_{dd}|^{\nu_d})} \leq \int_{\bigtimes_{k=1}^{d-1} \mathcal{S}_+^{p_k}} |I_p + 2 \overline{T} \Sigma|^{-\alpha/2} \prod_{i=1}^{d-1} \mu_i(\rd T_{ii}).
\]

By the expression for the Laplace transform of the Wishart distribution in Remark~\ref{rem:Wishart.Lt}, Fubini's theorem, and the Laplace transforms in~\eqref{eq:3.1} for inverse determinant powers, one finds that the last integral satisfies
\[
\begin{aligned}
\int_{\bigtimes_{k=1}^{d-1} \mathcal{S}_+^{p_k}} |I_p + 2 \overline{T} \Sigma|^{-\alpha/2} \prod_{i=1}^{d-1} \mu_i(\rd T_{ii})
&= \int_{\bigtimes_{k=1}^{d-1} \mathcal{S}_+^{p_k}}\int_{\mathcal{S}_{++}^p} \etr\left(-\overline{T} X\right) f_{\alpha,\Sigma}(X) \, \rd X \prod_{i=1}^{d-1} \mu_i(\rd T_{ii}) \\
&= \EE\left\{\int_{\bigtimes_{k=1}^{d-1} \mathcal{S}_+^{p_k}} \prod_{i=1}^{d-1} \etr\left(- T_{ii} \mathfrak{X}_{ii}\right) \mu_i(\rd T_{ii})\right\} \\
&= \EE\left(\prod_{i=1}^{d-1} |\mathfrak{X}_{ii}|^{-\nu_i}\right).
\end{aligned}
\]
The conclusion follows.
\end{proof}

\begin{proof}[\textbf{Proof of Theorem~\ref{thm:generalization.Theorem.2.GO.2023}}]
Throughout this proof, one writes $\overline{\etr}(\cdot) = 1 - \etr(\cdot)$ for short.
The random pairs $(\mathfrak{X}_{11}, \mathfrak{X}_{22})$ and $(\mathfrak{X}_{11}^{\star}, \mathfrak{X}_{22}^{\star})$ satisfy $(\mathfrak{X}_{11}, \mathfrak{X}_{22}) \preceq_{\mathrm{Lt}} (\mathfrak{X}_{11}^{\star}, \mathfrak{X}_{22}^{\star})$ and have identical margins by assumption. Therefore, given any $p \times p$ positive definite matrices $S\in \mathcal{S}_{++}^{p_1}$ and $T\in \mathcal{S}_{++}^{p_2}$, one has
\begin{equation}
\label{eq:4.2}
\begin{aligned}
&\EE \big [ \big\{\overline{\etr}(-S \mathfrak{X}_{11})\big\} \big\{\overline{\etr}(-T \mathfrak{X}_{22})\big\}\big] \\[0.5mm]
&\qquad= 1 - \EE \big\{\etr(-S \mathfrak{X}_{11})\big\} - \EE \big\{\etr(-T \mathfrak{X}_{22})\big\} + \EE \big\{\etr(-S \mathfrak{X}_{11}) \etr(-T \mathfrak{X}_{22})\big\} \\[0.5mm]
&\qquad\geq 1 - \EE \big\{\etr(-S \mathfrak{X}_{11})\big\} - \EE \big\{\etr(-T \mathfrak{X}_{22})\big\} + \EE \big\{\etr(-S \mathfrak{X}_{11}^{\star}) \etr(-T \mathfrak{X}_{22}^{\star})\big\}.
\end{aligned}
\end{equation}
Owing to the fact that the random matrices $\mathfrak{X}_{11}^{\star}$ and $\mathfrak{X}_{22}^{\star}$ are independent, the right-hand term can be rewritten as
\begin{multline*}
1 - \EE \big\{\etr(-S \mathfrak{X}_{11}^{\star})\big\} - \EE \big\{\etr(-T \mathfrak{X}_{22}^{\star})\big\} + \EE \big\{\etr(-S \mathfrak{X}_{11}^{\star})\big\} \EE \big\{\etr(-T \mathfrak{X}_{22}^{\star})\big\} \\[1mm]
= \EE \big[\big\{\overline{\etr}(-S \mathfrak{X}_{11}^{\star})\big\}\big] \EE \big[\big\{\overline{\etr}(-T \mathfrak{X}_{22}^{\star})\big\}\big].
\end{multline*}

Next, observe that if $f$ and $g$ are matrix-variate Bernstein functions with triplets $(A_1,0,\mu_1)$ and $(A_2,0,\mu_2)$, as per Definition~\ref{def:MB}, then one can write $\EE \{ f(\mathfrak{X}_{11}) g(\mathfrak{X}_{22}) \}$ as
\begin{multline*}
\mathrm{tr}(A_1) \mathrm{tr}(A_2)
+ \mathrm{tr}(A_1) \int_{\mathcal{S}_{++}^{p_2}} \EE \big\{\overline{\etr}(-T \mathfrak{X}_{22})\big\} \, \mu_2(\rd T)
+ \mathrm{tr}(A_2) \int_{\mathcal{S}_{++}^{p_1}} \EE \big\{\overline{\etr}(-S \mathfrak{X}_{11})\big\} \, \mu_1(\rd S) \\
+ \int_{\mathcal{S}_{++}^{p_1} \times \mathcal{S}_{++}^{p_2}} \EE \big[\big\{\overline{\etr}(-S \mathfrak{X}_{11})\big\} \big\{\overline{\etr}(-T \mathfrak{X}_{22})\big\}\big] \, \mu_1(\rd S) \, \mu_2(\rd T).
\end{multline*}
Again using the fact that the random pairs $(\mathfrak{X}_{11}, \mathfrak{X}_{22})$ and $(\mathfrak{X}_{11}^{\star}, \mathfrak{X}_{22}^{\star})$ have identical margins by assumption, and in view of inequality~\eqref{eq:4.2}, the above expression is bounded from below by
\begin{multline*}
\mathrm{tr}(A_1) \mathrm{tr}(A_2)
+ \mathrm{tr}(A_1) \int_{\mathcal{S}_{++}^{p_2}} \EE \big\{\overline{\etr}(-T \mathfrak{X}_{22}^{\star})\big\} \, \mu_2(\rd T)
+ \mathrm{tr}(A_2) \int_{\mathcal{S}_{++}^{p_1}} \EE \big\{\overline{\etr}(-S \mathfrak{X}_{11}^{\star})\big\} \, \mu_1(\rd S) \\
+ \int_{\mathcal{S}_{++}^{p_1} \times \mathcal{S}_{++}^{p_2}} \EE \big\{\overline{\etr}(-S \mathfrak{X}_{11}^{\star})\big\} \EE \big\{\overline{\etr}(-T \mathfrak{X}_{22}^{\star})\big\} \, \mu_1(\rd S) \, \mu_2(\rd T),
\end{multline*}
which is the same as $\EE \{f(\mathfrak{X}_{11}^{\star})\} \EE \{g(\mathfrak{X}_{22}^{\star})\}$. Therefore, the claim is proved.
\end{proof}

\begin{proof}[\textbf{Proof of Theorem~\ref{thm:eigenvalues.MTP2}}]
By Theorem~3 of \citet{MR3923912}, the proof of which is based on earlier results of \citet{MR458718}, it is known that the density function for the vector of eigenvalues of a $p\times p$ Wishart random matrix is $\mathrm{MTP}_2$ on $(0,\infty)^p$ according to Definition~\ref{def:MTP2}. Densities in this class have many interesting properties, including the one written in the statement of the theorem, which follows immediately from Eq.~(1.6) of \citet{MR628759}.
\end{proof}

\section{Possible extension: GPI for elliptical laws}\label{sec:elliptical.GPI}

As mentioned in the Introduction, the strong form of the GPI conjecture due to \citet{MR2886380} stipulates that if $\bb{Z} = (Z_1, \ldots, Z_d)$ is a centered Gaussian (column) random vector and if $\alpha_1, \ldots, \alpha_d\in [0,\infty)$, then
\begin{equation}\label{eq:5.1}
\EE \left( \prod_{i=1}^d |Z_i|^{2\alpha_i} \right) \geq \prod_{i=1}^d \EE \big( |Z_i|^{2\alpha_i} \big).
\end{equation}

Suppose, more generally, that the vector $\bb{Z}$ has an elliptical distribution \cite{MR0629795}, so that it can be expressed in the form $\bb{Z} = \sqrt {R} \, A\bb{U}$, where $R$ is a nonnegative random variable, $A$ is a lower-triangular positive definite matrix of size $d \times d$, and $\bb{U}$ is a random vector independent of $R$ and uniformly distributed on the unit sphere in $d$ dimensions, i.e.,
\[
\mathbb{S}^{d-1} = \{ \bb{u}\in \mathbb{R}^d: u_1^2 + \cdots + u_d^2 = 1 \}.
\]

The special case where $R$ is chi-squared with $d$ degrees of freedom corresponds to the multivariate Gaussian distribution with mean zero and correlation matrix $\Sigma = A A^{\top}$. Other choices of distribution for $R$ lead to non-Gaussian elliptical laws, such as the multivariate Student's $t$; see, e.g., \cite{MR1071174} for a book-length treatment of this topic.

Relying exclusively on the stochastic representation $\bb{Z} = \sqrt {R} \, A\bb{U}$ and the independence between $R$ and $\bb{U}$, one can express inequality~\eqref{eq:5.1} in the form
\[
\EE \big( R^{\alpha} \big) \times \EE \left( \prod_{i=1}^d |X_i|^{2\alpha_i} \right) \geq \prod_{i=1}^d \EE (R^{\alpha_i}) \times \prod_{i=1}^d \EE \big( |X_i|^{2\alpha_i} \big),
\]
where $\alpha = \alpha_1 + \cdots + \alpha_d$ and the (column) vector $\bb{X} = (X_1, \ldots, X_d)$ satisfies $\bb{X} = A\bb{U}$. Assuming that $R$ is not identically zero, and regrouping terms differently, one gets
\begin{equation}
\label{eq:5.2}
\EE \left\{ \prod_{i=1}^d \frac{|X_i|^{2\alpha_i}}{ \EE \big( |X_i|^{2\alpha_i} \big)} \right\} \geq \frac{\prod_{i=1}^d \EE (R^{\alpha_i})}{\EE ( R^{\alpha})} .
\end{equation}

Observe that the left-hand term involves only the components of $\bb{X}$ which, for positive integer exponents, are linear combinations of the components of the unit vector $\bb{U}$ uniformly distributed on the sphere $\mathbb{S}^{d-1}$. As for the right-hand ratio, say $Q_R (\alpha_1, \ldots, \alpha_d)$, it depends only on the constants $\alpha_1, \ldots, \alpha_d \in [0, \infty)$ and the distribution of $R$.

Whatever $\alpha_1, \ldots, \alpha_d \in [0, \infty)$, one has $Q_{kR} (\alpha_1, \ldots, \alpha_d) = Q_R (\alpha_1, \ldots, \alpha_d)$ for every possible constant $k \in (0, \infty)$ and $Q_R (\alpha_1, \ldots, \alpha_d) \leq 1$ by the majorization inequality in Section~6 of \citet{MR2266030}; see also \cite{MR4660831} for related work. When $R$ is chi-squared with $d$ degrees of freedom, i.e., $R\sim \mathcal{G}\hspace{0.3mm}(d/2,1/2)$ in the shape-rate parametrization of the Gamma distribution, Eq.~(17.8) in \cite{MR1299979} shows that the ratio on the right-hand side of \eqref{eq:5.2} reduces to
\vspace{-3mm}
\[
Q (\alpha_1, \ldots, \alpha_d) = \frac{\Gamma(d/2)}{2^{\alpha} \Gamma(\alpha + d/2)} \prod_{i=1}^d \frac{2^{\alpha_i} \Gamma(\alpha_i + d/2)}{\Gamma(d/2)}
= \frac{\prod_{i=1}^d \Gamma(\alpha_i + d/2)}{\Gamma(\alpha + d/2) \{\Gamma(d/2)\}^{d-1}}.
\]

Cast in this framework, the strong form of the GPI conjecture amounts to the claim that for any random vector $\bb{U}$ uniformly distributed on $\mathbb{S}^{d-1}$, any lower-triangular positive definite matrix $A$ of size $d \times d$, and any constants $\alpha_1, \ldots, \alpha_d \in [0, \infty)$, one has
\[
\EE \left\{ \prod_{i=1}^d \frac{|X_i|^{2\alpha_i}}{ \EE \big( |X_i|^{2\alpha_i} \big)} \right\} \geq Q (\alpha_1, \ldots, \alpha_d).
\]

Thus if the strong form of the GPI conjecture holds, inequality \eqref{eq:5.1} would also be verified for any other elliptically distributed random vector $\bb{Z}$ for which the density of $R$ is such that $Q_R (\alpha_1, \ldots, \alpha_d) \leq Q (\alpha_1, \ldots, \alpha_d)$ for some $\alpha_1, \ldots, \alpha_d \in [0, \infty)$.

\begin{appendices}

\renewcommand{\thesection}{Appendix~\Alph{section}}

\section{Technical lemmas}\label{app:A}

\renewcommand{\thesection}{\Alph{section}}

The first lemma contains well-known formulas from linear algebra for the inverse of a $2\times 2$ block matrix; see, e.g., Theorem~2.1 of \citet{MR1873248}.

\begin{lemma}\label{lem:inverse.block.matrix}
Let $M$ be a matrix of width at least $2$ which is partitioned into a $2\times 2$ block matrix as follows:
\[
M =
\begin{bmatrix}
M_{11} & M_{12} \\
M_{21} & M_{22}
\end{bmatrix}.
\]
\begin{itemize}\setlength\itemsep{0em}
\item[(i)]
If $M_{11}$ is invertible and the Schur complement $M_{\star} = M_{22} - M_{21} M_{11}^{-1} M_{12}$ is invertible, then
\vspace{-1mm}
\[
M^{-1} =
\begin{bmatrix}
M_{11}^{-1} + M_{11}^{-1} M_{12} M_{\star}^{-1} M_{21} M_{11}^{-1} & - M_{11}^{-1} M_{12} M_{\star}^{-1} \\[1mm]
- M_{\star}^{-1} M_{21} M_{11}^{-1} & M_{\star}^{-1}
\end{bmatrix}.
\]
\item[(ii)]
If $M_{22}$ is invertible and the Schur complement $M_{\star} = M_{11} - M_{12} M_{22}^{-1} M_{21}$ is invertible, then
\vspace{-1mm}
\[
M^{-1} =
\begin{bmatrix}
M_{\star}^{-1} & - M_{\star}^{-1} M_{12} M_{22}^{-1} \\[1mm]
- M_{22}^{-1} M_{21} M_{\star}^{-1} & M_{22}^{-1} + M_{22}^{-1} M_{21} M_{\star}^{-1} M_{12} M_{22}^{-1}
\end{bmatrix}.
\]
\end{itemize}
\end{lemma}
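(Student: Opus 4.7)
The plan is to establish part (i) and then obtain part (ii) by symmetry (swapping the roles of the $(1,1)$ and $(2,2)$ blocks, or equivalently conjugating by the block permutation $\bigl[\begin{smallmatrix} 0 & I \\ I & 0\end{smallmatrix}\bigr]$). So the bulk of the work is part (i), and the approach I would take is the standard block $LDU$ factorization, which has the virtue of making the Schur complement appear naturally rather than being pulled out of a hat.

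First I would write down the identity
\[
M = \begin{bmatrix} I & 0 \\ M_{21} M_{11}^{-1} & I \end{bmatrix}
\begin{bmatrix} M_{11} & 0 \\ 0 & M_{\star} \end{bmatrix}
\begin{bmatrix} I & M_{11}^{-1} M_{12} \\ 0 & I \end{bmatrix},
\]
which is immediate by multiplying out the right-hand side and using $M_{\star} = M_{22} - M_{21} M_{11}^{-1} M_{12}$. Under the hypotheses of part (i) all three factors are invertible, and the outer two (block-unit-triangular) inverses are obtained simply by flipping the sign of the off-diagonal block. Inverting the product in reverse order then gives
\[
M^{-1} = \begin{bmatrix} I & - M_{11}^{-1} M_{12} \\ 0 & I \end{bmatrix}
\begin{bmatrix} M_{11}^{-1} & 0 \\ 0 & M_{\star}^{-1} \end{bmatrix}
\begin{bmatrix} I & 0 \\ -M_{21} M_{11}^{-1} & I \end{bmatrix},
\]
and expanding this product yields the claimed formula for $M^{-1}$ in part (i). As a sanity check, one may verify directly that $M M^{-1} = I$ by block multiplication; this is essentially mechanical but confirms the signs and the placement of the Schur complement in the $(1,1)$ block of $M^{-1}$.

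For part (ii), the cleanest route is to note that if $P = \bigl[\begin{smallmatrix} 0 & I \\ I & 0\end{smallmatrix}\bigr]$ denotes the block swap of appropriate sizes, then $P M P$ interchanges the roles of $M_{11}$ and $M_{22}$ (and of $M_{12}$ and $M_{21}$), so the Schur complement of the new $(1,1)$ block is exactly $M_{\star} = M_{11} - M_{12} M_{22}^{-1} M_{21}$. Applying part (i) to $PMP$ and then conjugating back by $P$ (i.e.\ swapping the block rows and columns of the resulting inverse) gives the formula in part (ii) without any further computation.

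No real obstacle is expected: every step is either algebraic verification or a symmetry argument, and the only thing to be careful about is keeping the signs and the order of the factors straight when inverting the $LDU$ product. If one prefers to avoid even the swap argument, part (ii) can equally well be proved by the analogous $UDL$ factorization
\[
M = \begin{bmatrix} I & M_{12} M_{22}^{-1} \\ 0 & I \end{bmatrix}
\begin{bmatrix} M_{\star} & 0 \\ 0 & M_{22} \end{bmatrix}
\begin{bmatrix} I & 0 \\ M_{22}^{-1} M_{21} & I \end{bmatrix},
\]
and inverting each factor, which is fully parallel to the argument for part (i).
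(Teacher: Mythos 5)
Your proof is correct and complete. The paper itself does not actually prove this lemma---it simply cites Theorem~2.1 of a reference (Lu and Shiou, \emph{Inverses of $2\times 2$ block matrices}), treating the result as standard background. Your block $LDU$ factorization argument is the canonical way to derive these formulas, and it is indeed the same idea that underlies the cited reference; the outer block-unit-triangular factors invert by sign-flipping the off-diagonal block, the inner block-diagonal factor inverts blockwise, and expanding the product in reverse order reproduces the stated entries exactly. The symmetry argument via conjugation by the block-swap permutation $P$ for part (ii) is also valid (and the alternative $UDL$ factorization you offer is equally clean). In short, you supplied a self-contained proof where the paper supplies only a citation, and the two are in complete agreement in substance.
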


The second lemma contains moment formulas for standalone principal minors of Wishart random matrices. It is a consequence, e.g., of Corollary~3.2.6 and p.\,101 of \citet{MR652932} but has been known since the work of \citet{doi:10.2307/2331979} when the degree-of-freedom parameter, $\alpha$, is integer-valued. Refer to \citet{MR2458187} for moment formulas involving off-diagonal minors of Wishart random matrices.

\begin{lemma}\label{lem:determinant.power.moments}
For any positive integers $d$ and $p_1, \ldots, p_d \in \N$, let $p = p_1 + \cdots + p_d$ and fix any $\alpha \in (p-1,\infty)$. Let also $\Sigma\in \mathcal{S}_{++}^p$ be any $p \times p$ positive definite matrix and
\[
\mathfrak{X}
= \begin{bmatrix}
\mathfrak{X}_{11} & \cdots & \mathfrak{X}_{1d} \\
\vdots & \ddots & \vdots \\
\mathfrak{X}_{d1} & \cdots & \mathfrak{X}_{dd} \\
\end{bmatrix} \sim\mathcal{W}_p(\alpha,\Sigma), \quad \text{with}~~ \Sigma =
\begin{bmatrix}
\Sigma_{11} & \cdots & \Sigma_{1d} \\
\vdots & \ddots & \vdots \\
\Sigma_{d1} & \cdots & \Sigma_{dd} \\
\end{bmatrix},
\]
as per Definition~\ref{def:Wishart}. Then, for every $i \in \{1, \ldots, d\}$ and $\nu_i > -\alpha/2 + (p_i - 1)/2$, one has
\[
\EE(|\mathfrak{X}_{ii}|^{\nu_i}) = 2^{p_i \nu_i} |\Sigma_{ii}|^{\nu_i} \frac{\Gamma_{p_i}(\alpha/2 + \nu_i)}{\Gamma_{p_i}(\alpha/2)}.
\]
\end{lemma}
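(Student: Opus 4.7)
The plan is to split the argument into two clean steps: first, identify the marginal distribution of the diagonal block $\mathfrak{X}_{ii}$, and second, compute the determinant moment by a direct integration against a Wishart density, using the multivariate gamma function identity.

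For the first step, I would invoke Corollary~3.2.6 of \citet{MR652932}, which states that any diagonal block of a Wishart matrix is itself Wishart distributed with the same degree of freedom and the corresponding block of the scale matrix. That is, $\mathfrak{X}_{ii}\sim \mathcal{W}_{p_i}(\alpha,\Sigma_{ii})$. Since $\alpha > p-1 \geq p_i - 1$, this marginal Wishart distribution is well defined as per Definition~\ref{def:Wishart}.

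For the second step, I would substitute the Wishart density directly. One has
\[
\EE(|\mathfrak{X}_{ii}|^{\nu_i}) = \int_{\mathcal{S}_{++}^{p_i}} |X|^{\nu_i} \, \frac{|X|^{\alpha/2 - (p_i + 1)/2} \etr(-\Sigma_{ii}^{-1} X/2)}{2^{p_i \alpha/2} |\Sigma_{ii}|^{\alpha/2} \Gamma_{p_i}(\alpha/2)} \, \rd X.
\]
Combining the two determinantal powers under a single exponent of $(\alpha + 2\nu_i)/2 - (p_i+1)/2$, the integrand matches (up to a normalizing constant) the density of a $\mathcal{W}_{p_i}(\alpha + 2\nu_i,\Sigma_{ii})$ random matrix, provided that $\alpha + 2\nu_i > p_i - 1$, i.e., exactly under the stated assumption $\nu_i > -\alpha/2 + (p_i - 1)/2$. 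Recognizing the integral as the reciprocal of the Wishart normalizing constant for parameters $(\alpha + 2\nu_i,\Sigma_{ii})$, one obtains
\[
\int_{\mathcal{S}_{++}^{p_i}} |X|^{(\alpha + 2\nu_i)/2 - (p_i + 1)/2} \etr(-\Sigma_{ii}^{-1} X/2) \, \rd X = 2^{p_i(\alpha + 2\nu_i)/2} |\Sigma_{ii}|^{(\alpha + 2\nu_i)/2} \Gamma_{p_i}(\alpha/2 + \nu_i).
\]
Dividing by the original normalizing constant collapses everything to the announced formula
\[
\EE(|\mathfrak{X}_{ii}|^{\nu_i}) = 2^{p_i \nu_i} |\Sigma_{ii}|^{\nu_i} \frac{\Gamma_{p_i}(\alpha/2 + \nu_i)}{\Gamma_{p_i}(\alpha/2)}.
\]

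There is no real obstacle here: both steps are routine appeals to classical multivariate analysis. The only point to verify carefully is the parameter restriction $\nu_i > -\alpha/2 + (p_i - 1)/2$, which is precisely what is needed to ensure that the shifted Wishart integral converges and that $\Gamma_{p_i}(\alpha/2 + \nu_i)$ is finite.
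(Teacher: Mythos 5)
Your proof is correct and follows exactly the route the paper has in mind: the paper states the lemma as ``a consequence of Corollary~3.2.6 and p.~101 of Muirhead (1982),'' which are precisely your two steps --- first using Corollary~3.2.6 to obtain $\mathfrak{X}_{ii}\sim\mathcal{W}_{p_i}(\alpha,\Sigma_{ii})$, then deriving (rather than citing Theorem~3.2.15 on p.~101) the determinant-power moment by recognizing the integrand as an unnormalized $\mathcal{W}_{p_i}(\alpha+2\nu_i,\Sigma_{ii})$ density. The parameter restriction is handled correctly: $\nu_i > -\alpha/2 + (p_i-1)/2$ is exactly what makes $\Gamma_{p_i}(\alpha/2+\nu_i)$ finite and the multivariate gamma integral convergent.
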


\renewcommand{\thesection}{Appendix \Alph{section}}

\section{Conditions for the finiteness of certain expectations}\label{app:B}

\renewcommand{\thesection}{\Alph{section}}

The goal of this appendix is to determine necessary and sufficient conditions on the reals $\nu_1,\ldots,\nu_d\in (0,\infty)$ under which the expectations in the statement of Corollary~\ref{cor:GPI.negative.powers.determinants.Wishart} are finite. These conditions can also be applied in Proposition~\ref{prop:analogue.Theorem.1.3.Zhou.et.al}.

\begin{lemma}\label{lem:finite.exp}
In the setting of Corollary~\ref{cor:GPI.negative.powers.determinants.Wishart}, the expectation
\[
\EE \left(\prod_{i=1}^d |\mathfrak{X}_{ii}|^{-\nu_i}\right)
\]
is infinite if there exists at least one index $i\in \{1,\ldots,d\}$ for which $\nu_i \geq \alpha/2 - (p_i - 1)/2$. It is finite if $(p_i - 1)/2 < \nu_i < \alpha/2 - (p_i - 1)/2$ for every integer $i \in \{1, \dots, d\}$.
\end{lemma}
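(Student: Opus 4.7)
The plan is to express the expectation as a multivariate matrix integral via the Laplace representation~\eqref{eq:3.1}, valid since $\nu_i > (p_i - 1)/2$ throughout in the setting of Corollary~\ref{cor:GPI.negative.powers.determinants.Wishart}. Applying~\eqref{eq:3.1} to each factor $|\mathfrak{X}_{ii}|^{-\nu_i}$, invoking Tonelli's theorem (valid for non-negative integrands, with equality in $[0,\infty]$), and using the Wishart Laplace transform of Remark~\ref{rem:Wishart.Lt} with $\overline{T} = \bigoplus_{i=1}^d T_{ii}$, one obtains
\[
\EE\left(\prod_{i=1}^d |\mathfrak{X}_{ii}|^{-\nu_i}\right) = \left\{\prod_{i=1}^d \Gamma_{p_i}(\nu_i)^{-1}\right\} \int_{\bigtimes_i \mathcal{S}_{++}^{p_i}} \prod_{i=1}^d |T_{ii}|^{\nu_i - (p_i+1)/2} \cdot |I_p + 2\overline{T}\Sigma|^{-\alpha/2} \prod_{i=1}^d \rd T_{ii},
\]
so finiteness of the expectation reduces to finiteness of this matrix integral.

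For the \emph{finiteness} direction, I would induct on $d$ using the block-determinant identity $|I_p + 2\overline T\Sigma| = |I_{p_1} + 2T_{11}\Sigma_{11}|\cdot|I_{p-p_1} + 2\overline T_{2:d}\,M(T_{11})|$, where $M(T_{11}) = \Sigma_{2:d,2:d} - 2\Sigma_{2:d,1}(I_{p_1} + 2T_{11}\Sigma_{11})^{-1}T_{11}\Sigma_{1,2:d}$. The elementary identity $(I+2T_{11}\Sigma_{11})^{-1}\cdot 2T_{11} = [I-(I+2T_{11}\Sigma_{11})^{-1}]\Sigma_{11}^{-1}$ rewrites $M(T_{11}) = \Sigma_\star + \Sigma_{2:d,1}(\Sigma_{11}+2\Sigma_{11}T_{11}\Sigma_{11})^{-1}\Sigma_{1,2:d}$, where $\Sigma_\star := \Sigma_{2:d,2:d} - \Sigma_{2:d,1}\Sigma_{11}^{-1}\Sigma_{1,2:d}$ is the Schur complement of $\Sigma_{11}$ in $\Sigma$; since $\Sigma_{11}+2\Sigma_{11}T_{11}\Sigma_{11}\succ 0$, this yields the Löwner bound $M(T_{11}) \succeq \Sigma_\star$ uniformly in $T_{11}\succeq 0$. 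Combined with the monotonicity of $N \mapsto |I_{p-p_1} + 2\overline T_{2:d}N|$ on $N \succeq 0$, the integrand is dominated by the product of $|T_{11}|^{\nu_1 - (p_1+1)/2}|I_{p_1} + 2T_{11}\Sigma_{11}|^{-\alpha/2}$ and the analogous $(d-1)$-variable integrand with $\Sigma_\star \in \mathcal{S}_{++}^{p-p_1}$ in place of $\Sigma$. The $T_{11}$-integral is finite iff $(p_1-1)/2 < \nu_1 < \alpha/2 - (p_1-1)/2$ (the $d=1$ case of~\eqref{eq:3.1}), and the $(d-1)$-variable integral is finite by the inductive hypothesis (noting $\alpha > p - 1 > (p-p_1) - 1$).

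For the \emph{infinity} direction, suppose $\nu_{i_0} \geq \alpha/2 - (p_{i_0}-1)/2$ for some $i_0$. A block-determinant expansion in the complementary direction gives the sharp asymptotic
\[
|I_p + 2\overline T \Sigma| \sim 2^{p_{i_0}}\,|T_{i_0 i_0}| \cdot |\Sigma_{i_0 i_0}| \cdot |I_{p - p_{i_0}} + 2\overline T^{(i_0)}\Sigma^{(i_0)}_\star|
\]
as the eigenvalues of $T_{i_0 i_0}$ tend to infinity (with the other $T_{jj}$ fixed), where $\overline T^{(i_0)} = \bigoplus_{j \neq i_0}T_{jj}$ and $\Sigma^{(i_0)}_\star$ is the Schur complement of $\Sigma_{i_0 i_0}$ in a suitably permuted $\Sigma$. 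Hence $|I_p + 2\overline T \Sigma|^{-\alpha/2}$ is bounded below by a strictly positive multiple of $|T_{i_0 i_0}|^{-\alpha/2}$ on an unbounded subset of $\mathcal{S}_{++}^{p_{i_0}}$. Changing variable $T_{i_0 i_0}\mapsto T_{i_0 i_0}^{-1}$ on $\mathcal{S}_{++}^{p_{i_0}}$ (Jacobian $|T_{i_0 i_0}|^{-(p_{i_0}+1)}$), one sees that the effective $T_{i_0 i_0}$-exponent is non-integrable at infinity precisely when $\nu_{i_0}\geq\alpha/2-(p_{i_0}-1)/2$. For every fixed configuration of the other $T_{jj}$, the inner $T_{i_0 i_0}$-integral is then infinite, and Tonelli propagates this divergence to the full integral and hence to the expectation.

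The main technical step is the Löwner identity $M(T_{11})=\Sigma_\star + \Sigma_{2:d,1}(\Sigma_{11}+2\Sigma_{11}T_{11}\Sigma_{11})^{-1}\Sigma_{1,2:d}$ underpinning the induction; once this is in hand, the rest of the argument is standard matrix-Gamma analysis applied one block at a time.
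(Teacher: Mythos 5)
Your proof is correct and the overall strategy---the reduction via the Laplace representation~\eqref{eq:3.1}, Tonelli's theorem, the Wishart Laplace transform, and then a block-by-block Schur-complement reduction---matches the paper's in broad outline, but the two arguments diverge in their key technical steps.

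For the \emph{finiteness} direction, your execution is arguably cleaner than the paper's. The paper first changes variables to normalize $\Sigma_{dd} = I_{p_d}$, then peels off the $d$-th block using the L\"owner bound $T_{dd}^{1/2}(I_{p_d} + T_{dd})^{-1}T_{dd}^{1/2} \prec I_{p_d}$. You instead verify the identity
\[
M(T_{11}) = \Sigma_\star + \Sigma_{2:d,1}\bigl(\Sigma_{11} + 2\Sigma_{11}T_{11}\Sigma_{11}\bigr)^{-1}\Sigma_{1,2:d},
\]
which immediately gives $M(T_{11}) \succeq \Sigma_\star$ uniformly in $T_{11} \succeq 0$; this peels off a block without any preliminary normalization. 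Both routes then close by citing the convergence condition of the matrix-variate Beta-type-II (equivalently, Fisher) integral, although your parenthetical citation of ``the $d=1$ case of~\eqref{eq:3.1}'' is not quite the right reference for that convergence fact---\eqref{eq:3.1} is the Laplace identity, not an integrability criterion; the paper cites Theorem~8.2.8 of Muirhead, which is what you actually need.

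For the \emph{infinity} direction, you take a genuinely different route. The paper disposes of this part in two lines: iterating the lower bound of Theorem~\ref{thm:generalization.Theorem.3.2.Wei} gives $\prod_i \EE(|\mathfrak{X}_{ii}|^{-\nu_i}) \leq \EE(\prod_i |\mathfrak{X}_{ii}|^{-\nu_i})$, and then Lemma~\ref{lem:determinant.power.moments} forces the left-hand side to be infinite as soon as some $\nu_{i_0} \geq \alpha/2 - (p_{i_0}-1)/2$. Your direct asymptotic analysis of the integrand is self-contained (it does not invoke the GPI-type inequality from the main text), which is appealing, but the phrase ``bounded below by a strictly positive multiple of $|T_{i_0 i_0}|^{-\alpha/2}$ on an unbounded subset'' leaves unverified that the divergence of the single-block integral actually lives on that subset. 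You can sidestep this entirely: the same identity gives the complementary uniform L\"owner bound $M(T_{11}) \preceq \Sigma_{2:d,2:d}$ (since $\Sigma_{11} + 2\Sigma_{11}T_{11}\Sigma_{11} \succeq \Sigma_{11}$, so its inverse decreases), whence
\[
|I_p + 2\overline{T}\Sigma| \leq |I_{p_1} + 2T_{11}\Sigma_{11}| \cdot \bigl|I_{p-p_1} + 2\overline{T}_{2:d}\,\Sigma_{2:d,2:d}\bigr|
\]
for \emph{all} $T_{11} \succeq 0$. The $T_{11}$-integral of the reciprocal $\alpha/2$-power is then bounded below by a positive multiple of the Beta-type-II integral $\int_{\mathcal{S}_{++}^{p_1}} |T_{11}|^{\nu_1 - (p_1+1)/2}|I_{p_1} + 2T_{11}\Sigma_{11}|^{-\alpha/2}\,\rd T_{11}$, which diverges precisely when $\nu_1 \geq \alpha/2 - (p_1-1)/2$, and Tonelli propagates the divergence. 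This keeps your elementary spirit but removes the asymptotic and the vague subset. Either this fix or the paper's shorter route via Theorem~\ref{thm:generalization.Theorem.3.2.Wei} and Lemma~\ref{lem:determinant.power.moments} would be fine.
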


\begin{proof}[\textbf{Proof of Lemma~\ref{lem:finite.exp}}]
As explained in the paragraphs leading up to Corollary~\ref{cor:GPI.negative.powers.determinants.Wishart}, if $\nu_i > (p_i - 1)/2$ for all $i\in \{1,\ldots,d\}$, then by iterating the lower bound in Theorem~\ref{thm:generalization.Theorem.3.2.Wei}, one obtains
\[
\prod_{i=1}^d \EE\big(|\mathfrak{X}_{ii}|^{-\nu_i}\big) \leq \EE\left(\prod_{i=1}^d |\mathfrak{X}_{ii}|^{-\nu_i}\right).
\]
By Lemma~\ref{lem:determinant.power.moments} of \ref{app:A}, the left-hand side is infinite if there exists at least one index $i\in \{1,\ldots,d\}$ for which $\nu_i \geq \alpha/2 - (p_i - 1)/2$. This proves the first claim.

For the second claim, using the proof of Theorem~\ref{thm:generalization.Theorem.3.2.Wei} as a starting point, one has
\begin{align}
\label{eq:B.1}
\EE\left(\prod_{i=1}^d |\mathfrak{X}_{ii}|^{-\nu_i}\right)
= \int_{\mathcal{S}_+^{p_1} \times \cdots \times \mathcal{S}_+^{p_d}} \left|I_p + 2 \left(\bigoplus_{i=1}^d T_{ii}\right) \Sigma\right|^{-\alpha/2} \prod_{i=1}^d \frac{|T_{ii}|^{\nu_i - (p_i + 1)/2}}{\Gamma_{p_i}(\nu_i)} \, \rd T_{ii}.
\end{align}
By applying the nonsingular transformation,
\[
T_{dd} \to \Sigma_{dd}^{-1/2} T_{dd} \Sigma_{dd}^{-1/2},
\]
the Jacobian and determinant factor $|T_{dd}|^{\nu_d - (p_d + 1)/2}$ give rise to powers of $|\Sigma_{dd}|$, which one can remove from the integrand. This reduces the problem of determining conditions for the finiteness of the integral in \eqref{eq:B.1} to the special case in which $\Sigma_{dd} = I_{p_d}$.

For further reference, note that because $\Sigma_{dd} = I_{p_d}$, the Schur complement
\[
\Sigma_{\star} = \Sigma_{1:d-1,1:d-1} - \Sigma_{1:d-1,d} \Sigma_{d,1:d-1}
\]
is positive definite. Now write
\[
\begin{aligned}
\left|I_p + \left(\bigoplus_{i=1}^d T_{ii}\right)\Sigma\right|
&= \left|I_p + \left(\bigoplus_{i=1}^d T_{ii}\right)^{1/2} \Sigma \left(\bigoplus_{i=1}^d T_{ii}\right)^{1/2}\right| \\
&=
\begin{vmatrix}
\begin{bmatrix}
I_{p-p_d} & 0 \\ 0 & I_{p_d}
\end{bmatrix}
\!+\!
\begin{bmatrix}
\big(\bigoplus_{i=1}^{d-1} T_{ii}\big)^{1/2} & 0 \\ 0 & T_{dd}^{1/2}
\end{bmatrix}
\Sigma
\begin{bmatrix}
\big(\bigoplus_{i=1}^{d-1} T_{ii}\big)^{1/2} & 0 \\ 0 & T_{dd}^{1/2}
\end{bmatrix}
\end{vmatrix}.
\end{aligned}
\]
On inserting
\[
\Sigma =
\begin{bmatrix}
\Sigma_{1:d-1,1:d-1} & \Sigma_{1:d-1,d} \\
\Sigma_{d,1:d-1} & I_{p_d}
\end{bmatrix}
\]
into the latter determinant and simplifying the expression, one obtains
\[
\left|I_p + \left(\bigoplus_{i=1}^d T_{ii}\right)\Sigma\right|
=
\begin{vmatrix}
I_{p-p_d} + A & B T_{dd}^{1/2} \\
T_{dd}^{1/2} B^{\top} & I_{p_d} + T_{dd}
\end{vmatrix},
\]
where
\[
A = \left(\bigoplus_{i=1}^{d-1} T_{ii}\right)^{1/2} \Sigma_{1:d-1,1:d-1} \left(\bigoplus_{i=1}^{d-1} T_{ii}\right)^{1/2} \quad \text{and} \quad
B = \left(\bigoplus_{i=1}^{d-1} T_{ii}\right)^{1/2} \Sigma_{1:d-1,d}.
\]

By applying Schur complements, one finds that
\[
\begin{aligned}
\left|I_p + \left(\bigoplus_{i=1}^d T_{ii}\right)\Sigma\right|
= |I_{p_d} + T_{dd}| \times |I_{p-p_d} + A - B T_{dd}^{1/2} (I_{p_d} + T_{dd})^{-1} T_{dd}^{1/2} B^{\top}|.
\end{aligned}
\]
In the latter of these two determinants, there appears the term $T_{dd}^{1/2} (I_{p_d} + T_{dd})^{-1} T_{dd}^{1/2}$. It is straightforward to prove that, for any positive definite matrix $T_{dd}\in \mathcal{S}_{++}^{p_d}$,
\[
T_{dd}^{1/2} (I_{p_d} + T_{dd})^{-1} T_{dd}^{1/2} < I_{p_d}
\]
in the positive definite ordering. Therefore
\[
\begin{aligned}
|I_{p-p_d} + A - B T_{dd}^{1/2} (I_{p_d} + T_{dd})^{-1} T_{dd}^{1/2} B^{\top}|
&\geq |I_{p-p_d} + A - B B^{\top}| \\
&= \left|I_{p-p_d} + \left(\bigoplus_{i=1}^{d-1} T_{ii}\right)^{1/2} \Sigma_{\star} \left(\bigoplus_{i=1}^{d-1} T_{ii}\right)^{1/2}\right| \\
&= \left|I_{p-p_d} + \left(\bigoplus_{i=1}^{d-1} T_{ii}\right) \Sigma_{\star}\right|.
\end{aligned}
\]
In turn, one has
\[
\left|I_p + \left(\bigoplus_{i=1}^d T_{ii}\right)\Sigma\right| \geq |I_{p_d} + T_{dd}| \times \left|I_{p-p_d} + \left(\bigoplus_{i=1}^{d-1} T_{ii}\right) \Sigma_{\star}\right|.
\]

By applying this inequality in \eqref{eq:B.1}, one obtains
\[
\begin{aligned}
\EE\left(\prod_{i=1}^d |\mathfrak{X}_{ii}|^{-\nu_i}\right)
&\leq \int_{\mathcal{S}_+^{p_d}} |I_{p_d} + T_{dd}|^{-\alpha/2} \frac{|T_{dd}|^{\nu_d - (p_d + 1)/2}}{\Gamma_{p_d}(\nu_d)} \rd T_{dd} \\
&\quad\times \int_{\mathcal{S}_+^{p_1} \times \cdots \times \mathcal{S}_+^{p_{d-1}}}
\left|
I_{p-p_d} + \left(\operatornamewithlimits{\bigoplus}\limits_{i=1}^{d-1} T_{ii}\right) \Sigma_{\star}
\right|^{-\alpha/2} \prod_{i=1}^{d-1} \frac{|T_{ii}|^{\nu_i - (p_i + 1)/2}}{\Gamma_{p_i}(\nu_i)} \, \rd T_{ii}.
\end{aligned}
\]
The first integral converges for all $(\alpha,p_d,\nu_d)$ satisfying $(p_d - 1)/2 < \nu_d < \alpha/2 - (p_d - 1)/2$ according to Definition~5.2.2 of \citep{Gupta_Nagar_1999} for the matrix-variate Beta-type-II distribution, or equivalently, to Theorem~8.2.8 of \citet{MR652932} for the Fisher ($F$) distribution. By repeating the above argument successively for $i = d-1$, then $i = d-2$, etc., all the way down to $i = 1$, one finds that the expectation is finite if $(\alpha,p_i,\nu_i)$ satisfies
\[
(p_i - 1)/2 < \nu_i < \alpha/2 - (p_i - 1)/2
\]
for every integer $i\in \{1,\ldots,d\}$. This proves the second claim.
\end{proof}

\renewcommand{\thesection}{Appendix \Alph{section}}

\section{Evaluation of the integrals in Corollary~\ref{cor:GPI.negative.powers.determinants.Wishart}}\label{app:C}

\renewcommand{\thesection}{\Alph{section}}

The goal of this appendix is to study the integrals
\[
\mathcal{I}_{p_i}(M_{ii}) = \int_{\mathcal{S}_{++}^{p_i}} \frac{|T_{ii}|^{\nu_i - (p_i + 1)/2}}{\big|I_{p_i} + \sqrt{2} \, T_{ii}^{1/2} M_{ii}\big|^{\alpha}} \, \rd T_{ii},
\]
which appear, for every $i\in \{1,\ldots,d\}$ and $M_{ii}\in \mathcal{S}_{++}^{p_i}$, in the upper bound in Corollary~\ref{cor:GPI.negative.powers.determinants.Wishart}.

It will be shown below that the Jacobian determinant of the transformation $T_{ii} \mapsto X^2$ with respect to $X$ is
\begin{equation}\label{eq:Jacobian.x.2}
J_{p_i}(X) = 2^{p_i} |X| \prod_{1 \leq i < j \leq p_i} (\lambda_i + \lambda_j).
\end{equation}
Assuming for now that this is true, the quantity $\prod_{1 \leq i < j \leq p_i} (\lambda_i + \lambda_j)$ is a linear combination of monomial symmetric functions, and thus a linear combination of zonal polynomials. Specifically, there exist some constants $a_{\bb{\kappa}}\in \R$ such that
\[
J_{p_i}(X) = 2^{p_i} |X| \sum_{\bb{\kappa}} a_{\bb{\kappa}} C_{\bb{\kappa}}(X),
\]
where, from here onwards, the sum $\sum_{\bb{\kappa}}$ is over all partitions $\bb{\kappa} = (k_1,\ldots,k_{p_i})$ satisfying $k_1 \geq \dots \geq k_{p_i} \geq 0$ and $k_1 + \dots + k_{p_i} = p_i(p_i+1)/2$. Hence, one finds
\[
\mathcal{I}_{p_i}(M_{ii}) = 2^{p_i} \sum_{\bb{\kappa}} a_{\bb{\kappa}} \int_{\mathcal{S}_{++}^{p_i}} \frac{|X|^{2\nu_i - p_i}}{\big|I_{p_i} + \sqrt{2} \, X M_{ii}\big|^{\alpha}} \, C_{\bb{\kappa}}(X) \, \rd X.
\]

Applying the transformation $X \mapsto (2 M_{ii}^2)^{-1/4} X (2 M_{ii}^2)^{-1/4}$, which has Jacobian determinant $|2 M_{ii}^2|^{-(p_i + 1)/4}$ by Theorem~2.1.6 of~\citet{MR652932}, it follows that
\[
\mathcal{I}_{p_i}(M_{ii}) = 2^{p_i} |2 M_{ii}^2|^{-\nu_i + (p_i - 1)/4} \sum_{\bb{\kappa}} a_{\bb{\kappa}} \int_{\mathcal{S}_{++}^{p_i}} \frac{|X|^{2\nu_i - p_i}}{\big|I_{p_i} + X\big|^{\alpha}} \, C_{\bb{\kappa}}(M_{ii}^{-1} X / \sqrt{2}) \, \rd X.
\]
From Eq.~(13) of~\citet{MR190965}, or equivalently from Eq.~(1.5.18) of \citet{Gupta_Nagar_1999}, the summation above is equal to
\[
\sum_{\bb{\kappa}} a_{\bb{\kappa}} \, \frac{\Gamma_{p_i}\{2\nu_i - (p_i - 1)/2, \bb{\kappa}\} \Gamma_{p_i}[\alpha - \{2\nu_i - (p_i - 1)/2\}, -\bb{\kappa}]}{\Gamma_{p_i}(\alpha)} \, C_{\bb{\kappa}}(M_{ii}^{-1} / \sqrt{2}),
\]
if and only if $(p_i - 1)/2 < \nu_i < \alpha/2 - p_i (p_i + 1)/4$, where, for every integer $m\in \N$ and reals $a > (m - 1)/2$, $b > (m - 1)/2 + k_1$,
\[
\begin{aligned}
\Gamma_m(a, \bb{\kappa})
&= \pi^{m(m-1)/4} \prod_{j=1}^m \Gamma\{a + k_j - (j - 1)/2\}, \\
\Gamma_m(b, -\bb{\kappa})
&= \pi^{m(m-1)/4} \prod_{j=1}^m \Gamma\{a - k_j - (m - j)/2\}.
\end{aligned}
\]
This proves the claim made in Remark~\ref{rem:finite.exp}.

\newpage
To complete this section, it remains to prove Eq.~\eqref{eq:Jacobian.x.2}.

\begin{proof}[Proof of Eq.~\eqref{eq:Jacobian.x.2}]
For simplicity of notation, let $p_i = p\in \N$. The following approach is motivated by the derivation of a Jacobian given by Herz \cite[Lemma 3.7, p.~495]{MR69960}.  The idea is to calculate the Laplace transform of the measure arising from transforming the Lebesgue measure using the transformation $X \mapsto X^2$.  Thus, for $T\in \mathcal{S}_{++}^p$, consider the integral
\[
\mathcal{L}_p(T) = \int_{\mathcal{S}^p_{++}} \etr(-TX^{1/2}) \rd X.
\]
Now introduce the transformation $X = H^{\top}\Lambda H$, where $\Lambda = \textrm{diag}(\lambda_1,\ldots,\lambda_p)$ is a diagonal matrix containing the eigenvalues of $X$, and $H \in O(p)$, the orthogonal group.  It is well-known (Muirhead, Theorem 3.2.17, p.~104) that the Jacobian of the transformation $X \mapsto (\Lambda,H)$ is
\[
V(\Lambda) = c \prod_{1 \leq i < j \leq p} |\lambda_i - \lambda_j|,
\]
where $c$ is a constant whose explicit value is not needed. Since $(H^{\top}\Lambda H)^{1/2} = H^{\top}\Lambda^{1/2}H$, then
\[
\mathcal{L}_p(T) = \int_{O(p)} \int_{\Lambda \in \mathcal{S}^p_{++}} \etr(-TH^{\top}\Lambda^{1/2}H) V(\Lambda) \rd \Lambda \rd H.
\]
Now make the transformation $\Lambda \mapsto \Lambda^2$, i.e., $\lambda_i \mapsto \lambda_i^2$ for each $i \in \{ 1 ,\dots, p \}$; the Jacobian of this transformation is
$
\prod_{i=1}^p 2\lambda_i.
$
Therefore
\[
\mathcal{L}_p(T) = \int_{O(p)} \int_{\Lambda \in \mathcal{S}^p_{++}} \etr(-TH^{\top}\Lambda H) V(\Lambda^2) \Big(\prod_{i=1}^p 2\lambda_i\Big) \rd \Lambda \rd H.
\]
Considering that
\[
V(\Lambda^2) = c \prod_{1 \leq i < j \leq p} |\lambda_i^2 - \lambda_j^2| = c \prod_{1 \leq i < j \leq p} |(\lambda_i - \lambda_j)(\lambda_i + \lambda_j)| = V(\Lambda) \prod_{1 \leq i < j \leq p} (\lambda_i + \lambda_j),
\]
one has
\[
\mathcal{L}_p(T) = \int_{O(p)} \int_{\Lambda \in \mathcal{S}^p_{++}} \etr(-TH^{\top}\Lambda H) V(\Lambda) \Big(\prod_{i=1}^p 2\lambda_i\Big) \bigg\{\prod_{1 \leq i < j \leq p} (\lambda_i + \lambda_j)\bigg\} \rd \Lambda \rd H.
\]
Observe that the expression
\[
\Big(\prod_{i=1}^p 2\lambda_i\Big) \bigg\{\prod_{1 \leq i < j \leq p} (\lambda_i + \lambda_j)\bigg\},
\]
is a \textit{symmetric} function of the eigenvalues, and therefore it is an orthogonally invariant function of $X$.  Denoting this function by $J_p(X)$, one obtains $J_p(X) = J_p(H^{\top}XH)$, and therefore
\[
\mathcal{L}_p(T) = \int_{O(p)} \int_{\Lambda \in \mathcal{S}^p_{++}} \etr(-TH^{\top}\Lambda H) J_p(\Lambda) V(\Lambda) \rd \Lambda \rd H.
\]
Now making the reverse transformation $(\Lambda,H) \mapsto X$, one obtains
\[
\mathcal{L}_p(T) = \int_{\mathcal{S}^p_{++}} \etr(-TX) J_p(X) \rd X.
\]
This proves that the Jacobian of the transformation $X \mapsto X^2$ is $J_p(X)$.  Note also that
\[
J_p(X) = 2^p |X| \prod_{1 \leq i < j \leq p} (\lambda_i + \lambda_j).
\]
This concludes the proof.
\end{proof}

\end{appendices}

\section*{Acknowledgments}
\addcontentsline{toc}{section}{Acknowledgments}
\noindent
The authors thank the reviewers for their careful reading and positive feedback.

\section*{Funding}
\addcontentsline{toc}{section}{Funding}
\noindent
Genest's research is funded in part by the Canada Research Chairs Program (Grant no.~950--231937) and the Natural Sciences and Engineering Research Council of Canada (RGPIN-2024-04088). Ouimet's funding was made possible through a contribution to Genest's research program from the Trottier Institute for Science and Public Policy.

\addcontentsline{toc}{section}{References}

\bibliographystyle{plainnat}
\bibliography{GOR_2024_matrix_variate_GPI_bib}

\end{document}